\title{Maurer--Cartan elements in symplectic cohomology from compactifications}
\author{Matthew Strom Borman, Mohamed El Alami, and Nick Sheridan}
\begin{document}
\begin{abstract}
    We prove that under certain conditions, a normal crossings compactification of a Liouville domain determines a Maurer--Cartan element for the $L_\infty$ structure on its symplectic cohomology; and deforming by this element gives the quantum cohomology of the compactification.
\end{abstract}

\maketitle

\section{Introduction}

\subsection{Geometric setup}

Let $(M,\omega)$ be a compact symplectic manifold which is positively monotone, so that 
\begin{equation}\label{eq:monotonicity}
[\omega] = 2\kappa c_1(M) \quad \text{in}\ H^2(M,\mathbb{R}), \quad \kappa>0.
\end{equation}

Let $D= (D_i)_{i=1}^N \subset M$ be an orthogonal\footnote{The assumption that $D$ is orthogonal does not cause any loss of generality by \cite{McLean-growth-rate}, as observed in \cite[Remark 2.2]{BSV}.} simple crossings divisor (we recall the definition in Section \ref{sec:scd}) such that
\begin{equation}
    2c_1(M) = \lambda_1 \pd(D_1) + \dotsi + \lambda_N \pd(D_N),
\end{equation}
where $\pd: H_{2n-2}(M,\mathbb{Q})\rightarrow H^2(M,\mathbb{Q})$ is Poincar\'e duality and $\lambda_i \in \mathbb{Q}_{>0}$ are positive rational numbers. This condition essentially says that $D$ supports (a multiple of) an effective anti-canonical divisor. 

Let $\bk$ be a field of characteristic zero, and define the $\Q$-graded $\bk$-algebra
\begin{align*}\label{eq: ring R+}
    &R^+ = \bk[q_1,\dots,q_N],\quad \deg(q_j) = \lambda_j,
\end{align*}
as well as its localization at the product $q_1\dotsi q_N$,
\begin{equation}
    R  = \bk[q_1^{\pm 1},\dots,q_N^{\pm 1}].
\end{equation}
These are the Novikov-type rings we work with. 
We define the quantum cohomology to be the $\Q$-graded $R$-module 
$$QH^*(M;R) \defeq H^*(M;\bk) \otimes_\bk R.$$

The complement $X = M \setminus D$ carries a Liouville structure which is convex and of finite type in the sense of \cite{McLean-growth-rate}. 
Several definitions of an $L_\infty$ algebra structure on symplectic cochains $SC^*(X;\bk)$ are available \cite{Pomerleano-Seidel-1,Abouzaid-Groman-Varolgunes,ES1}; we will use the definition from \cite{ES1} as it is adapted to the purposes of the present paper.

\subsection{Maurer--Cartan element}

We briefly recall the formalism of Maurer--Cartan elements in $L_\infty$ algebras, following \cite{Getzler-Lie-theory}. 

Let $(\mathfrak{g},\ell^*)$ be an $L_\infty$ algebra over $\bk$, and $(R,Q_{\ge \bullet})$ a graded filtered $\bk$-algebra. 
We define $\mathfrak{g}^R$ to be the degreewise completion of $\mathfrak{g} \otimes R$; it carries an $L_\infty$ structure by linear extension. 
Define the Maurer--Cartan set
\begin{equation}
    MC(\mathfrak{g},R) \defeq \left\{\beta \in Q_{\geq 1}\mathfrak{g}^R \ | \ \deg(\beta)=2\quad\text{and}\quad \mathcal{F}(\beta) = 0\right\},
\end{equation}
where $\mathcal{F}(\beta) \in \mathfrak{g}^R$ is the curvature term
\begin{equation}\label{eq: MC}
    \mathcal{F}(\beta) = \sum_{d=1}^{\infty} \frac{1}{d!} \ell^d(\beta,\dots,\beta).
\end{equation}
(The appearance of $d!$ in the denominator here is the reason we need to assume that $\bk$ has characteristic zero.)

Elements $\beta\in MC(\mathfrak{g},R)$ are used to define deformed $L_{\infty}$ operations,
\begin{equation}
    \ell_{\beta}^d(x_1,\dots,x_d) = \sum_{k=0}^{\infty} \frac{1}{k!}\ell^{k+d}(\beta^{\otimes k},x_1,\dots,x_d),
\end{equation}
see \cite[Proposition 4.4]{Getzler-Lie-theory}. 
Given an element $\gamma \in Q_{\ge 1} \mathfrak{g}^R$ of degree $1$, we may define a vector field $\alpha \mapsto \ell^1_\alpha(\gamma)$ which is tangent to $MC(\mathfrak{g},R)$; Maurer--Cartan elements which lie on a common flowline of such a vector field are said to be gauge equivalent, and give rise to quasi-isomorphic deformed $L_\infty$ algebras.

The main construction of this paper is:

\begin{construction}\label{const}
    We construct a Maurer--Cartan element 
    $$\beta \in MC(SC^*(X;\bk),R^+).$$
    
\end{construction}

The linear term of the Maurer--Cartan element gives a well-defined cohomology class
$$[D\beta(0)] \in SH^*(X;\bk) \otimes \mathfrak{m}/\mathfrak{m}^2$$
of degree $2$, where $\mathfrak{m} \subset R^+$ is the ideal generated by the $q_i$. This takes the form
$$[D\beta(0)] = \sum_{j=1}^N q_j \cdot \beta_j,$$
where $\beta_j \in SH^{2-\deg(q_j)}(X;\bk)$. 
Under assumptions where sphere bubbling is ruled out (e.g., if there is no spherical class $A \in H_2(M)$ with $\omega(A) > 0$, $A \cdot D_j = 1$, and $A \cdot D_k = 0$ for $k \neq j$), we expect that
$$\beta_j = PSS^{log}(\gamma_j)$$
where $PSS^{log}$ is the `log PSS map' constructed in \cite{Ganatra-Pomerleano-top-limit,Ganatra-Pomerleano-log-PSS}, and $\gamma_j$ is the class in the log cohomology of $(M,D)$ corresponding to the divisor $D_j$, as in \cite[Section 6]{GHHPS}. 
Indeed, it should be possible to prove this with the techniques used to prove \cite[Proposition 6.11]{GHHPS}.

\subsection{Deforming by the Maurer--Cartan element}

Our main result says that the deformation of symplectic cohomology by the Maurer--Cartan element $\beta$ recovers the quantum cohomology, but this only holds under the following:

\begin{hypothesis}[Hypothesis A of \cite{BSV}]\label{hyp}
    For all $j=1,\dots,N$, we have $\lambda_j \leq 2$.
\end{hypothesis}

We then have:

\begin{theorem}\label{thm}
    Under Hypothesis \ref{hyp}, we have an isomorphism of graded $R$-modules
    $$QH^*(M;R) \cong H^*\left(SC^*(X;\bk) \wotimes_\bk R,\ell^1_{\beta}\right).$$
\end{theorem}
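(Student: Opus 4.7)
The plan is to construct an $R$-linear cochain map
\[
\Psi \colon \bigl(C^*(M;\bk) \otimes R,\, d_{\mathrm{Morse}}\bigr) \longrightarrow \bigl(SC^*(X;\bk) \wotimes_\bk R,\, \ell^1_\beta\bigr)
\]
from a Morse cochain model for $H^*(M;\bk) \otimes R = QH^*(M;R)$ to the $\beta$-deformed symplectic cochains, and then prove $\Psi$ is a quasi-isomorphism via a spectral sequence argument that reduces the comparison, on the first page, to the log PSS isomorphism of Ganatra--Pomerleano under Hypothesis~\ref{hyp}.

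I would define $\Psi$ as a $\beta$-deformed log PSS map: it counts pseudoholomorphic half-cylinders mapped into the partial compactification $M$, with one cylindrical output asymptotic to a Hamiltonian orbit generator of $SC^*(X;\bk)$, one interior marked point constrained by a Morse ascending manifold in $M$, and an arbitrary number $k\ge 0$ of additional interior marked points each decorated with a copy of $\beta$ and weighted by $1/k!$. The chain map identity $\ell^1_\beta\circ \Psi + \Psi\circ d_{\mathrm{Morse}} = 0$ would follow from the standard codimension one boundary analysis of the resulting $1$-dimensional moduli spaces: Morse breaking at the constraint recovers $\Psi\circ d_{\mathrm{Morse}}$; Floer breaking at the output, with $k$ of the $\beta$-marked points escaping into the broken cylinder, contributes $\tfrac{1}{k!}\ell^{1+k}(\beta^{\otimes k},\Psi(-))$ which assembles over $k$ into $\ell^1_\beta\circ \Psi$; and closed sphere bubbling in $M$, with additional $\beta$-marked points absorbed into the bubble, assembles into $\mathcal{F}(\beta)$ acting on the residual configuration --- and vanishes by the MC equation of Construction~\ref{const}.

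To show that $\Psi$ is a quasi-isomorphism, equip both sides with an appropriate filtration, for example an energy filtration on $SC^*$ combined with the $q$-grading on $R$, or a filtration by depth of $\beta$-deformation, and run the induced spectral sequence. On the first page, the deformed differential $\ell^1_\beta$ degenerates to the undeformed $\ell^1$, and the leading term of $\Psi$ factors through the undeformed log PSS map $H^*_{\log}(M,D)\to SH^*(X;\bk)$ of \cite{Ganatra-Pomerleano-log-PSS}. Under Hypothesis~\ref{hyp}, the bound $\lambda_j\le 2$ controls the Conley--Zehnder indices of Reeb orbits winding once around each $D_j$, and this is precisely the input needed for the log PSS map to be an isomorphism. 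A completeness/convergence argument for the filtration, together with the standard identification of the graded pieces of $QH^*(M;R)$ with those of the deformation, then yields the theorem.

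The main obstacle is the chain-level setup: arranging coherent Floer-theoretic perturbation data simultaneously compatible with the definition of $SC^*(X;\bk)$, with the auxiliary data defining the $L_\infty$ structure $\ell^*$ of \cite{ES1}, and with the moduli data underlying $\beta$, in such a way that the sphere-bubbling boundary strata of the $\Psi$-moduli spaces assemble \emph{strictly} into $\mathcal{F}(\beta)$ (so that $\ell^1_\beta\circ\Psi=\Psi\circ d_{\mathrm{Morse}}$ holds on the nose and not merely up to a homotopy). Once $\Psi$ is constructed, the filtration step and the reduction to the Ganatra--Pomerleano comparison should be comparatively routine, though dependence on the precise form of the $L_\infty$ structure from \cite{ES1} is likely to introduce significant technical bookkeeping.
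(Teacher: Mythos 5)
Your proposal takes a genuinely different route from the paper, and as stated it has a gap in the comparison step that I don't think can be repaired without essentially replacing it.

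The paper never constructs a PSS-type map from Morse cochains of $M$ to symplectic cochains of $X$. Instead, it introduces a telescope complex $QC^*(M;R) = \bigoplus_n CF^*(M,H_n;R)[t]$ built from the \emph{same} Hamiltonians $H_n$ that define $SC^*(X;\bk)$, and exploits the dichotomy between $SH$-orbits and $D$-orbits to exhibit (on suitable truncations $\sigma_{<p-1}\mathcal{F}^X_{\ge p}$ controlled by the $\mathscr{P}$-functional) a tautological isomorphism of $R$-modules $\Phi(\gamma) = [\gamma,u]\otimes \bq^{u\cdot\bD}$. The central observation is that $\Phi$ is a \emph{chain isomorphism} between $\ell^1_{\alpha_0}$, where $\alpha_0 = \sum_j q_j x^{D_j}$ is the tautological Maurer--Cartan element built from formal weight-zero generators, and the quantum differential $d^{QC}$: the contributions to $d^{QC}$ from Floer trajectories in $M$ crossing $D$ correspond term by term, via the forgetful map on moduli spaces, with $\tfrac{1}{d!}\tilde\ell^{1+d}(\gamma,x^{D_{j_1}},\dots,x^{D_{j_d}})$. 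Because the comparison is at the level of generators, no PSS isomorphism is needed. The passage from $\alpha_0$ to $\beta=\alpha_1$ is then a purely algebraic gauge-equivalence argument (Lemma~\ref{lemma: qis}), whose spectral sequence is trivially an isomorphism on the associated graded since $\alpha_\tau \in Q_{\ge 1}$. This substitution of a chain-level identification plus Maurer--Cartan gauge theory for an analytic PSS comparison is the key novelty.

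The gap in your approach is the spectral sequence step. Filtering by $q$-degree, the $E_1$ page on the Morse side is $H^*(M;\bk)\otimes R$, while on the $SC$ side it is $SH^*(X;\bk)\otimes R$ with the undeformed differential; these are in general very different, and the undeformed PSS map $H^*(M;\bk)\to SH^*(X;\bk)$ (the leading term of your $\Psi$) is not an isomorphism. The higher pages on the $SC$ side carry genuine nonzero differentials --- this is precisely the mechanism by which $SH^*(X)$ gets cut down to $QH^*(M)$ --- so a comparison theorem cannot be run off the $E_1$ page. Invoking log PSS $H^*_{\log}(M,D)\to SH^*(X;\bk)$ does not close the gap, since your domain is $C^*(M;\bk)$, not a log cochain model, and log cohomology is strictly larger than $H^*(M;\bk)$. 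Finally, you misidentify the role of Hypothesis~\ref{hyp}: in the paper it guarantees that $SC^*(X;\bk)$ is concentrated in non-negative degrees, so that the degreewise completion $\wotimes$ agrees with the plain tensor product; it is not a Conley--Zehnder estimate ensuring that log PSS is an isomorphism, and I am not aware of a result in the Ganatra--Pomerleano literature that would play that role under this hypothesis. Your construction of the $\beta$-deformed half-cylinder map and the observation that sphere bubbling should assemble into $\mathcal{F}(\beta)$ are sensible heuristics (and indeed close to how one might imagine the log PSS map relating to the paper's construction, cf.\ the paper's remark that $\beta_j$ should equal $PSS^{\log}(\gamma_j)$), but even granting that this map can be rigorously defined, the argument that it is a quasi-isomorphism needs a different idea.
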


This proves \cite[Conjecture 1.25]{BSV}. 
Note that we have abused notation slightly by continuing to use the same notation $\beta$ for the image of the Maurer--Cartan element under the inclusion $$MC(SC^*(X;\bk),R^+) \hookrightarrow MC(SC^*(X;\bk),R).$$

We remark that Pomerleano--Seidel have proved an analogue of Theorem \ref{thm} in \cite[Corollary 1.2.2]{Pomerleano-Seidel}, in the case that $D$ is a smooth anticanonical divisor (i.e., $N=1$ and $\lambda_1 = 2$). Their construction of the Maurer--Cartan element $\beta$ is quite different from ours, but it can be shown relatively easily to be gauge equivalent to ours: indeed, grading considerations show that the quadratic and higher terms in the Maurer--Cartan element vanish on the chain level, as a result of which the gauge equivalence class is completely determined by the cohomology class of its linear term $[D\beta(0)]$. 

In fact, Pomerleano--Seidel prove a more precise result: they identify the cohomology of $(SC^*(X;\bk) \wotimes R^+,\ell^1_\beta)$ \cite[Theorem 1.2.1]{Pomerleano-Seidel}. 
It is an interesting question to determine the analogous result in the context of normal-crossings $D$ which we consider here, cf. \cite[Section 1.6.1]{BSV}. 
They also allow $\bk$ to be an arbitrary commutative ring, whereas we require it to be a field of characteristic zero, due to our use of Maurer--Cartan deformation theory.

\subsection{Conjectures}\label{sec:conj}

We conjecture a generalization of Theorem \ref{thm} to the case when Hypothesis \ref{hyp} does not apply:

\begin{conjecture}[Cf. Remark 1.21 of \cite{BSV}]\label{conj:no hyp}
    We have an isomorphism of graded $R$-modules
    \begin{equation}
        \label{eq:thm gen}
    SH^*_M(\mathbb{L};R) \cong H^*\left(SC^*(X;\bk) \wotimes_\bk R, \ell^1_\beta\right),
    \end{equation}
    where $\mathbb{L}$ is the skeleton of $X$ and $SH^*_M$ denotes the relative symplectic cohomology.
\end{conjecture}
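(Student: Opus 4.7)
\begin{sketch}
The plan is to adapt the proof of Theorem \ref{thm} by replacing the target $QH^*(M;R)$ with the relative symplectic cohomology $SH^*_M(\mathbb{L};R)$. The underlying idea is that $SH^*_M(\mathbb{L};R)$ interpolates between $SH^*(X;\bk)\otimes R$ (recovered in the $q_j=0$ specialization) and $QH^*(M;R)$ (recovered when Hypothesis \ref{hyp} holds), in precisely the way that the $\beta$-deformed cohomology on the right side of \eqref{eq:thm gen} interpolates. Under Hypothesis \ref{hyp}, both sides should collapse to $QH^*(M;R)$ and one recovers Theorem \ref{thm}.

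First, I would construct an $R$-linear chain-level comparison map $\Phi \colon \mathcal{C}^*_M(\mathbb{L};R) \to (SC^*(X;\bk)\wotimes_\bk R,\ell^1_\beta)$, where $\mathcal{C}^*_M(\mathbb{L};R)$ is a Floer model for $SH^*_M(\mathbb{L};R)$ in the Abouzaid--Groman--Varolgunes framework \cite{Abouzaid-Groman-Varolgunes}. Both sides are computed from Floer data on $M$ adapted to $D$ (respectively, Hamiltonians vanishing near $\mathbb{L}$ and growing on its complement; Hamiltonians linear at the Liouville infinity of $X$). The map $\Phi$ counts continuation-type cylinders between these two families, weighted by intersection multiplicities with the components $D_j$ encoded as monomials $q_1^{a_1}\cdots q_N^{a_N}$. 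The $\beta$-deformation on the target accounts for configurations where such a cylinder develops tangencies with $D$: matching the boundary configurations of the moduli spaces defining $\Phi$ with those used in \cite{ES1} to define $\beta$ is what ensures $\Phi$ is a chain map with respect to $\ell^1_\beta$.

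Next, I would show that $\Phi$ is a quasi-isomorphism via the $\mathfrak{m}$-adic filtration on both sides, with $\mathfrak{m}=(q_1,\dots,q_N)$. On the $E_1$-page, the target reduces to $SH^*(X;\bk)$ equipped with the differential linearized around $[D\beta(0)] = \sum_j q_j\,\beta_j$, while the source reduces to a Koszul-type complex over $SH^*(X;\bk)$ encoding the stratification of $D$ by intersection numbers. Identifying these $E_1$ pages hinges on the expected identification $\beta_j = PSS^{log}(\gamma_j)$ discussed after Construction \ref{const}: indeed, $\Phi$ restricted to $\mathfrak{m}=0$ should coincide with the log PSS map, so that its behavior on the linear-in-$q$ part induces the Koszul differential. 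Consistency with Theorem \ref{thm} then follows from the expected collapse of $SH^*_M(\mathbb{L};R)$ to $QH^*(M;R)$ under Hypothesis \ref{hyp}.

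The main obstacle will be twofold. First, the moduli problems defining $\beta$ in \cite{ES1} and the Floer-theoretic presentation of relative SH in \cite{Abouzaid-Groman-Varolgunes} are built from different Hamiltonian and almost complex structure data; aligning them so that $\Phi$ is simultaneously compatible with the Varolgunes differential and the $\ell^1_\beta$-differential is the central technical issue, and likely requires an interpolating family of Floer data together with a compatible virtual perturbation scheme. Second, without Hypothesis \ref{hyp}, sphere bubbles can escape the monotone regime that underpins the cleanest form of the PSS argument behind Theorem \ref{thm}; controlling these bubbles at the level of the $E_1$-comparison requires a careful energy--index analysis using monotonicity \eqref{eq:monotonicity}, together with an understanding of how such bubbles interact with the stratification of $D$ encoded in $\mathcal{C}^*_M(\mathbb{L};R)$.
\end{sketch}
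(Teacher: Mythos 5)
The statement you address is Conjecture~\ref{conj:no hyp}, which the paper explicitly leaves open: there is no proof to compare against. The authors only remark that it ``recovers Theorem~\ref{thm} when Hypothesis~\ref{hyp} applies, by~\cite[Theorem D]{BSV}'' and that they expect it to be ``well within reach'' of the paper's techniques combined with those of~\cite{BSV}. So what follows is a critique of the sketch on its own terms, not a comparison with a proof that does not exist in the paper.

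Two aspects of your sketch diverge from the natural extension of the paper's method, and both point to real gaps. First, you route the $E_1$-page comparison through the expected identification $\beta_j = PSS^{log}(\gamma_j)$, but the paper only conjectures this, and only under an extra no-bubbling assumption. The paper's own proof of Theorem~\ref{thm} deliberately never computes $\beta$: it first exhibits an explicit chain isomorphism between the $\alpha_0$-deformed complex and a truncation of the quantum Floer complex (Lemma~\ref{lemma: alpha_zero gives dQC}), where $\alpha_0 = \sum_j q_j x^{D_j}$ is a purely formal element whose deformed differential counts the same curves by the \textbf{(Forgetfulness)} axiom, and then transfers to the $\beta$-deformed complex by gauge equivalence (Lemma~\ref{lem: qis}). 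A proof of Conjecture~\ref{conj:no hyp} along the same lines would identify $(\sigma_{<p-1}\mathcal{F}^X_{\geq p}\otimes R,\ell^1_{\alpha_0})$ with a BSV-style reduction model for $SH^*_M(\mathbb{L};R)$, then gauge-transfer to $\beta$; the $PSS^{log}$ input is avoidable and importing it makes your proposal conditional on an unproved identification. Your ``Koszul-type complex'' description of the $E_1$-page of the relative-SH model is also asserted without justification and does not appear in~\cite{BSV}.

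Second, and more centrally, your sketch does not engage with what is actually the new difficulty once Hypothesis~\ref{hyp} is dropped: the passage from $R^+$ to $R = \bk[q_1^{\pm 1},\dots,q_N^{\pm 1}]$. Under Hypothesis~\ref{hyp}, $SC^*(X;\bk)$ sits in non-negative degrees, so $SC^*(X;\bk)\wotimes R = SC^*(X;\bk)\otimes R$ and the paper can invoke flatness of $R$ over $R^+$ to transport the $R^+$-linear quasi-isomorphisms of Lemmas~\ref{lem: qis} and~\ref{lemma: Iso with SC} to $R$. Without Hypothesis~\ref{hyp}, the degreewise completion is genuinely nontrivial, and localization does not commute with completion: one cannot simply tensor the $R^+$-level comparison up to $R$. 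Meanwhile $SH^*_M(\mathbb{L};R)$ is itself defined via a completion, so both sides involve limits that must be compared with care. Any credible sketch needs to name this issue and propose a mechanism for handling it (e.g.\ controlling the filtered complexes so that localization and completion can be interchanged, or performing the comparison before localizing). Your two stated obstacles (aligning Hamiltonian data and controlling sphere bubbles) are real but secondary; this completion/localization mismatch is the one the paper's own framework does not already resolve.
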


Note that Conjecture \ref{conj:no hyp} recovers Theorem \ref{thm} when Hypothesis \ref{hyp} applies, by \cite[Theorem D]{BSV}. 
On the other hand, it is strictly more general: for example, it holds in cases where both sides of \eqref{eq:thm gen} are $0$, such as when $M = \mathbb{CP}^n$ and $D$ is a hyperplane (cf. \cite[Remark 1.5]{BSV}). 
Another interesting case of Conjecture \ref{conj:no hyp} to consider, is the case that $M = \mathbb{CP}^1 \times \mathbb{CP}^1$ and $D$ is a $(1,1)$ hypersurface. 
Following work of Eliashberg and Polterovich \cite{EP}, we expect that both sides of \eqref{eq:thm gen} are non-zero, yet also not isomorphic to $QH^*(M;R)$; indeed, we expect they should be isomorphic to the $0$-generalized eigenspace of quantum cup product by $D$.

In a different direction, it is natural to conjecture that the isomorphism of Theorem \ref{thm} (or indeed Conjecture \ref{conj:no hyp}) respects certain natural algebraic structures, cf. \cite{Fabert}. 
For example, Pomerleano--Seidel prove that (when $D$ is smooth and anticanonical) their isomorphism respects the quantum connection \cite[Proposition 1.2.7]{Pomerleano-Seidel}.

Our next conjecture concerns the relationship of our construction with Seidel's relative Fukaya category $\mathcal{F}(M,D)$ \cite{Seidel-rel-fuk}. 

\begin{definition}
We define $\mathcal{W}(M,D)$ to be the curved filtered $R^+$-linear $A_\infty$ deformation of the wrapped Fukaya category $\mathcal{W}(X;\bk)$ corresponding to the pushforward of the Maurer--Cartan element 
$$\mathrm{CO}_*\beta \defeq \sum_{d \ge 1} \frac{1}{d!} \mathrm{CO}^d(\beta,\ldots,\beta) \in MC(CC^*(\mathcal{W}(X;\bk),R^+)$$ 
along the $L_\infty$ closed--open map $\mathrm{CO}:SC^*(X;\bk) \to CC^*(\mathcal{W}(X;\bk))$ constructed in \cite{ES1} (see also \cite{Pomerleano-Seidel-1}). 
\end{definition}

\begin{conjecture}\label{conj:relfuk}
    The full subcategory $\mathcal{W}(M,D)$ whose objects are compact Lagrangians is filtered quasi-equivalent to Seidel's relative Fukaya category $\mathcal{F}(M,D)$ \cite{Seidel-rel-fuk}. 
\end{conjecture}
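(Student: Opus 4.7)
The strategy is to show that both $R^+$-linear curved $A_\infty$ deformations of the compact subcategory $\mathcal{F}(X;\bk) \subset \mathcal{W}(X;\bk)$ arise from counting essentially the same pseudoholomorphic disks in $M$ weighted by their intersections with $D$. I would carry this out in three main steps.

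First, I would give a moduli-theoretic description of the operations in $\mathcal{F}(M,D) \subset \mathcal{W}(M,D)$. By the deformation formula, these operations count wrapped-category disks in $X$ with an arbitrary number of interior punctures, each decorated by $\CO_* \beta$. Unpacking the construction of $\beta$, whose chain-level representative counts curves in $M$ meeting the divisors $D_i$ with prescribed tangencies, together with the $L_\infty$ closed--open map of \cite{ES1}, the class $\CO_* \beta$ admits a description as a count of broken configurations: a Hamiltonian orbit cap in $X$ glued via a Reeb-type cylinder to a curve in $M$ tangent to $D$ to specified orders.

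Second, I would interpolate between Seidel's definition of $\mathcal{F}(M,D)$ and these broken configurations via a neck-stretching family. Let $U$ be a Weinstein neighbourhood of $D$ in $M$, and consider moduli spaces of disks in $M$ with boundary on compact Lagrangians $L \subset X \setminus U$ carrying tangency constraints to $D$, with a neck of variable length inserted along $\partial U$. As the neck-length goes to zero the count recovers Seidel's intersection-weighted definition of $\mathcal{F}(M,D)$; as it tends to infinity, SFT compactness breaks each configuration into a disk in $X$ with interior punctures capped off by exactly the $M$-curves appearing in $\beta$. Packaging this cobordism as a continuation-type $A_\infty$ functor between the two deformed categories yields a filtered quasi-equivalence, since its linear term reduces mod $\mathfrak{m}$ to the identity on $\mathcal{F}(X;\bk)$. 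Along the way one verifies compatibility with the $\Q$-grading $\deg(q_j)=\lambda_j$ from \eqref{eq:monotonicity} and with the $\mathfrak{m}$-adic filtration, both of which measure total intersection number with $D$.

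The main obstacle will be the compatibility of perturbation data. The $L_\infty$ and closed--open structures of \cite{ES1} use specific Morse--Bott-type perturbations, while Seidel's relative Fukaya category uses more classical choices; carrying out the neck-stretch while respecting both schemes simultaneously, and controlling the possible bubbling of (multiply-covered) spheres off of $D$, will likely require virtual perturbation techniques or auxiliary tangency constraints in the style of Cieliebak--Mohnke. A subsidiary point is matching Seidel's area-graded Novikov parameter with our $\Q$-graded $R^+$ via the monotonicity relation \eqref{eq:monotonicity}, which is essentially formal once the weights are computed but should be installed explicitly in the comparison functor.
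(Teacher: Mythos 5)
This statement is labelled as a \emph{conjecture} in the paper; the paper does not prove it, so there is no proof in the text to compare your attempt against. The authors say only that they expect the conjectures of that section to be ``well within reach of the techniques developed in this paper,'' and they cite Tonkonog's result as a verification of a fragment (the $d=1$, length-zero Hochschild cochain part).

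Your proposal is a plausible high-level strategy and is not in tension with anything in the paper, but, as you yourself acknowledge, it is a sketch with several unresolved analytical steps rather than a proof. Beyond the points you flag, two further gaps seem worth naming. First, your interpolation step relies on a neck-stretching / SFT degeneration along $\partial U$. This is precisely the analytical approach the paper deliberately avoids, substituting an algebraic gauge-flow argument for neck-stretching even in the simpler setting of Theorem~\ref{thm}; in the SFT limit you would have to match punctures asymptotic to Reeb orbits with the Hamiltonian Floer inputs appearing in $\mathrm{CO}_*\beta$, which is a well-known source of difficulty and is not addressed by the paper's perturbation-data framework. Second, your first step treats $\beta$ as though it were defined by a direct count of curves in $M$ tangent to $D$; in the paper, $\beta = \alpha_1$ is produced by solving a gauge-flow ODE from the tautological Maurer--Cartan element $\alpha_0$, and its curve-counting interpretation (e.g.\ that the linear term agrees with the log PSS image of the log classes $\gamma_j$) is itself only stated as an expectation, not proved. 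Until both of these are resolved, the continuation-type $A_\infty$ functor you envision cannot even be written down. The area-grading comparison in your last paragraph is, as you say, the easy formal part.
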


\begin{remark}
    Tonkonog's \cite[Theorem 6.5]{Tonkonog} constitutes a verification of part of Conjecture \ref{conj:relfuk}, namely the part with $d=1$ and length-zero Hochschild cochains.
\end{remark}

\begin{remark}
    As pointed out to us by Ian Zemke, the `Bordered $HF^-$ algebra' constructed in \cite{LOT} should be interpreted as a subcategory of $\mathcal{W}(M,D)$, for certain $(M,D)$, whose objects can't be interpreted as objects of $\mathcal{F}(M,D)$, because they are Lagrangians with non-empty boundary on $D$. 
    (We note that in this case $M$ need not be positively monotone, so strictly speaking the construction in this paper need not apply; however we expect that an analogous construction can be made more broadly.)
\end{remark}

\begin{remark}
    The `deformed gentle algebra' introduced in \cite{Bocklandt_vdK} should also be interpreted as a subcategory of $\mathcal{W}(M,D)$, whose objects can't be interpreted as objects of $\mathcal{F}(M,D)$.
\end{remark}

Finally, we conjecture that all of our results and conjectures have analogues in the case that $M$ is Calabi--Yau, rather than positively monotone (with no Hypothesis \ref{hyp} required in this case), cf. \cite{McLean-birational-CY,Sun-SH}.

We expect that all of the conjectures discussed in this section are well within reach of the techniques developed in this paper (combined with the techniques of \cite{BSV} in the case of Conjecture \ref{conj:no hyp}).

\subsection{Implications for mirror symmetry in the log Calabi--Yau case}

Following \cite{Tonkonog} and \cite[Section 1.6]{BSV}, we explain how our main result, together with the conjectures of the previous section, give an alternative perspective on Auroux' work on mirror symmetry in the log Calabi--Yau case \cite{Auroux-Tdual}. 
Thus, we assume that $D$ is anticanonical, meaning $\lambda_j = 2$ for all $j$. 

Suppose that $Y$ is a smooth $\Bbbk$-variety mirror to $X$, in the sense that we have a quasi-equivalence
$$\mathcal{W}(X;\bk) \simeq \mathrm{DCoh}(Y).$$
Hacking and Keating have proved such a result for surfaces \cite{HackingKeating}; in higher dimensions, one may expect it to be an output of the `intrinsic mirror symmetry' programme, see \cite{GS-intrinsic,Pomerleano-intrinsic}.

The mirror quasi-equivalence induces $L_\infty$ quasi-isomorphisms
\begin{equation}
    \label{eq:MS-qisos}
SC^*(X;\bk) \xrightarrow{\mathrm{CO}} CC^*(\mathcal{W}(X;\bk)) \simeq CC^*(\mathrm{DCoh}(Y)) \simeq C^*(Y,\wedge^* TY).
\end{equation}
Here $C^*(Y,\wedge^* TY)$ is the DG Lie algebra of derived global sections of the sheaf $(\wedge^* TY,0,[-,-])$ of DG Lie algebras with vanishing differential, and Schouten--Nijenhuis bracket (see e.g. \cite[Appendix B]{VdB} for more on the general construction of a structure of DG Lie algebra on a chain complex underlying derived global sections of a sheaf of DG Lie algebras; in the complex case, a specific model using Dolbeault cohomology was used in \cite{Barannikov-Kontsevich}). 
The main step in the construction of the right-hand quasi-equivalence is Konstevich's Formality Theorem \cite{Kontsevich-DQ,Kontsevich-DQ-alg}. 

Because $SC^*(X;\bk)$ is concentrated in non-negative degrees, and the generators $q_j$ of $R^+$ have degree $2$, the Maurer--Cartan element $\beta \in MC(SC^*(X;\bk),R^+)$ has only linear terms, and its gauge equivalence class is completely determined by 
$$D\beta(0) = \sum_j q_j \beta_j,$$
where $\beta_j \in SH^0(X;\bk)$. 
Under \eqref{eq:MS-qisos}, the Maurer--Cartan element $\beta$ corresponds to a Maurer--Cartan element $w \in MC(C^*(Y,\wedge^* TY),R^+)$, which is similarly determined by the linear terms
$$Dw(0) = \sum_j q_j w_j$$
where $w_j \in H^0(\wedge^0 TY) = \mathcal{O}(Y)$. 
In other words, the gauge equivalence class of the Maurer--Cartan element is determined by a `multi-potential' $(w_1,\ldots,w_N):Y \to \mathbb{A}^n$, in the terminology of \cite{Lee-multip}.

The Maurer--Cartan element $w$ determines an $A_\infty$ deformation of $\mathrm{DCoh}(Y)$ over $R^+$, which we denote by $\mathrm{DCoh}(Y,w)$; it is immediate that we have a filtered quasi-equivalence of $R^+$-linear curved $A_\infty$ categories
$$\mathcal{W}(M,D) \simeq \mathrm{DCoh}(Y,w).$$
In order to get equivalences of uncurved $A_\infty$ categories, we pass to categories of weak bounding cochains in the sense of \cite{FOOO} (see also the exposition in \cite[Section 4]{SheridanFano}). 
We could do this over $R^+$, but in order to get a more familiar mirror symmetry statement, we first pass to the single-variable Novikov ring $R^+_q = \bk[q]$ with $|q|=2$, via the graded algebra homomorphism
\begin{eqnarray*}
    R^+ & \to &R^+_q \\
    q_j &\mapsto &q \quad \text{for all $j$.}
\end{eqnarray*}
We denote the resulting Maurer--Cartan elements by $\beta_q = q \sum \beta_j$, and $w_q = qW$ with $W= \sum w_j$. 
We obtain a filtered quasi-equivalence of $R^+_q$-linear curved $A_\infty$ categories
$$\mathcal{W}(M,D)_q \simeq \mathrm{DCoh}(Y,w_q).$$
This induces a filtered quasi-equivalence of the corresponding uncurved categories of weak bounding cochains with curvature $\lambda q$:
$$\mathcal{W}(M,D)^{wbc}_{\lambda q} \simeq \mathrm{DCoh}(Y,w_q)^{wbc}_{\lambda q}$$
for all $\lambda \in \bk$ (see \cite[Theorem 2.12]{perutz2022constructing} for the case $\lambda = 0$). Setting $q=1$, we obtain quasi-equivalences of $\Z/2$-graded $\bk$-linear $A_\infty$ categories, which we denote by
$$\mathcal{W}(M,D)^{wbc}_\lambda \simeq \mathrm{DCoh}(Y,W)^{wbc}_\lambda.$$

We expect that $\mathrm{DCoh}(Y,W)^{wbc}_\lambda$ coincides with the category of matrix factorizations $MF(Y,W-\lambda)$ defined in \cite{Orlov-nonaffine,LinPomerleano} (which is equivalent to Orlov's triangulated category of singularities $\mathrm{DSing}(Y_\lambda)$ \cite{Orlov-Dsing}, where $Y_\lambda$ is the fibre of $W$ over $\lambda$, by \cite[Theorem 3.5]{Orlov-nonaffine}). 
Roughly, we expect that deforming by the Maurer--Cartan element $qW$ has the simple effect of adding a curvature term $qW \cdot \mathrm{id}$ to each object. 
A matrix factorization $$E_0 \xrightarrow{d_0} E_1 \xrightarrow{d_1} E_0,\qquad \text{satisfying}\qquad d_0d_1=(W-\lambda)\cdot \id_{E_1} \quad \text{and} \quad d_1d_0=(W - \lambda) \cdot \mathrm{id}_{E_0},$$ can then be considered as a complex of coherent sheaves $$E_0[-1] \xrightarrow{d_0} E_1,$$ equipped with a weak bounding cochain $qd_1$, where the only non-vanishing terms in the Maurer--Cartan equation
$$\mathfrak{m}^0 + \mathfrak{m}^1(qd_1) + \mathfrak{m}^2(qd_1,qd_1) + \ldots = q\lambda \cdot \mathrm{id}$$
are $\mathfrak{m}^0 = qW \cdot \mathrm{id}$ and $\mathfrak{m}^1(qd_1) = q(d_0d_1 + d_1d_0) = q(W-\lambda) \cdot \id_{E_0[-1] \oplus E_1}$.

Assuming this holds, we obtain a quasi-equivalence
$$\mathcal{W}(M,D)^{wbc}_\lambda \simeq MF(W-\lambda)$$
for all $\lambda$, which is the usual statement of homological mirror symmetry, albeit with an enlarged Fukaya category. 
Of course, if the subcategory $\mathcal{F}(M,D)^{wbc}_\lambda \subset \mathcal{W}(M,D)^{wbc}_\lambda$ split-generates, we may replace $\mathcal{W}$ with the more familiar $\mathcal{F}$.

\begin{example}
    Let $M$ be a smooth Fano toric variety, and $D$ its toric boundary divisor. 
    Then the completion of $X = M \setminus D$ is $T^*T^n$, which is mirror to $Y = \mathbb{G}_m^n$. 
    It follows from \cite[Theorem 6.5]{Tonkonog} and the computation of disc potentials for monotone torus fibres in Fano toric varieties \cite{Cho-Oh} that $w_j = z^{v_j} \in \mathcal{O}(Y) = \bk[z_1^{\pm 1},\ldots,z_n^{\pm 1}]$ where $v_j$ is the ray of the fan of $M$ corresponding to component $D_j$ of $D$. 
    Thus $W = \sum_j w_j$ is the usual Hori--Vara mirror potential, so we get the expected homological mirror symmetry result for Fano toric varieties; note that this is a case where we expect $\mathcal{F}(M,D)^{wbc}_\lambda \subset \mathcal{W}(M,D)^{wbc}_\lambda$ to split-generate, as each component of the Fukaya category is split-generated by some local system on the monotone torus fibre by \cite[Corollary 1.3.1]{Evans-Lekili}.
\end{example}

In a slightly different direction, we may deform the quasi-isomorphism \eqref{eq:MS-qisos} by the respective Maurer--Cartan elements, tensor with $R_q := \bk[q^{\pm 1}]$, then take cohomology. The left-hand side becomes $QH^*(M;R_q)$ by Theorem \ref{thm}, while the right-hand side becomes $\mathcal{O}(Z^h) \otimes_\bk R_q$, where $\mathcal{O}(Z^h)$ is the algebra of functions on the derived critical locus of $W$, as argued in \cite[Section 1.6.4]{BSV}. 
In the case that the critical locus of $W$ is isolated, this is the ordinary algebra of functions on the critical locus, which decomposes as the direct sum of Jacobian algebras of the critical points of $W$. 
This gives us the familiar closed-string mirror isomorphism between quantum cohomology of $M$ and the Jacobian algebra of $W$, on the level of vector spaces -- in order to upgrade it to respect more algebraic structure, one would need to correspondingly upgrade Theorem \ref{thm} (cf. \cite{Pomerleano-Seidel-1}), as well as the Formality Theorem (cf. \cite{Tamarkin} and \cite[Section 7]{Preygelthesis}).

\subsection{Outline}

Let us first motivate Theorem \ref{thm}.  
It is shown in \cite{BSV} how to compute the quantum cohomology in terms of Hamiltonian Floer theory of a sequence of Hamiltonians which become infinite along the divisor; and that one may safely `ignore' the orbits which are not contained in $X$ for this computation. 
Each relevant Floer trajectory, then, passes between orbits disjoint from $D$. 
We now imagine `stretching the neck' along $\partial X$, in the sense of symplectic field theory \cite{sft}. 
Floer trajectories will degenerate into components contained in $X$, which can be identified with $L_\infty$ operations; and `caps' lying outside $X$, whose count should define the Maurer--Cartan element $\beta$. 
Thus we expect neck-stretching to give a chain homotopy between the Floer differential (which counts Floer trajectories), and the deformed differential $\ell^1_\beta$.

Rather than implement this approach, we use a different approach which substitutes algebra for neck-stretching analysis. 
Recall that the symplectic cochains on $X$ are defined in \cite{ES1} as the telescope complex of an increasing sequence of Hamiltonians:
$$SC^*(X;\bk) = \bigoplus_{n=1}^\infty CF^*(X,H_n;\bk)[t],$$
where $t$ is a formal variable satisfying $t^2 = 0$. 
We enlarge this telescope complex to include formal generators $x^{D_j}$ for each component $D_j$ of the divisor, which we think of as carrying weight $n=0$:
$$\mathfrak{g} = \bigoplus_{j=1}^N \bk[t] \cdot x^{D_j} \oplus \bigoplus_{n=1}^\infty CF^*(X,H_n;\bk)[t].$$
We extend the $L_\infty$ structure to this enlarged complex by counting pseudoholomorphic curves $u$ in $M$, where weight-zero inputs $x^{D_j}$ or $t \cdot x^{D_j}$ correspond bijectively to intersection points of $u$ with $D_j$. 

\begin{remark}
In fact, we are not able to prove that the extended `$L_\infty$ operations' satisfy the $L_\infty$ relations, due to breakings which may occur along orbits lying outside of $X$. 
Instead, we show that they satisfy the $L_\infty$ operations when all inputs are drawn from a certain subspace, and show that this suffices for our purposes. 
Although this is the most technically involved part of the paper, it is not conceptually enlightening so we will elide it for the remainder of this section.
\end{remark}

The extended $L_\infty$ products are defined to vanish if all inputs are formal generators $x^{D_j}$; this is valid because when the corresponding marked points bubble off in our moduli spaces they form a sphere bubble, which we may arrange to appear in codimension $2$ rather than codimension $1$ as with the other boundary components contributing to the $L_\infty$ relations. 
As a result, we have a Maurer--Cartan element
$$\alpha_0 = \sum_{j=1}^N q_j \cdot x^{D_j} \in MC(\mathfrak{g},R^+)$$
(it satisfies the Maurer--Cartan equation \eqref{eq: MC} because every term vanishes).
From $\alpha_0$, we construct a one-parameter family of gauge-equivalent Maurer--Cartan elements $\alpha_\tau$ by flowing along the vector field determined by $\gamma = t \cdot \alpha_0$. 
The weight-zero part of $\alpha_\tau$ is $(1-\tau) \cdot \alpha_0$, and in particular, $\alpha_1 \in MC(SC^*(X;\bk),R^+)$. 
The Maurer--Cartan element $\beta$ from Construction \ref{const} is defined to be $\alpha_1$.

We now outline the proof of Theorem \ref{thm}.
We define a telescope complex 
$$QC^*(M;R) \defeq \bigoplus_{n \ge 1} CF^*(M,H_n)[t]$$
associated to the sequence of Hamiltonians $H_n$, extended to $M$. 
We have an isomorphism
$$QH^*(M;R) \cong H^*(QC^*(M;R),d^{QC}),$$
simply because each chain complex in the directed system computes $QH^*(M;R)$ by the PSS isomorphism, and the continuation maps between them are isomorphisms.

In \cite{BSV}, a map 
\begin{align*}
    \Phi: SC^*(X;\bk) \otimes R & \to QC^*(M;R)
\end{align*}
is constructed, sending
$$    \gamma \mapsto [\gamma,u] \otimes q_1^{u \cdot D_1} \ldots q_N^{u \cdot D_N}$$
for an arbitrary cap $u$ for $\gamma$. 
We arrange that this $\Phi$ is a chain map:
\begin{equation}
    \label{eq:Phi ch map}
    \Phi(\ell^1_{\alpha_0}(\gamma)) = d^{QC}(\Phi(\gamma)),
\end{equation}
because the differentials on the two sides of the equation count isomorphic moduli spaces. 

\begin{remark}
In order for the infinite sum defining the differential $\ell^1_{\alpha_0}$ to be defined in general, and for our subsequent application of standard arguments from Maurer--Cartan theory, we need our complexes to be degreewise complete, i.e. to work with $SC^*(X;\bk) \wotimes R$ rather than $SC^*(X;\bk) \otimes R$. 
Under Hypothesis \ref{hyp} however, $SC^*(X;\bk)$ is concentrated in non-negative degrees, which implies that the degreewise completion has no effect: $SC^*(X;\bk) \wotimes R = SC^*(X;\bk) \otimes R$.
\end{remark}

Let us now justify \eqref{eq:Phi ch map}. 
On the RHS, the differential counts Floer trajectories and continuation maps $u$ in $M$ between orbits $\gamma_0,\gamma_1$ in $X$; while on the LHS, we arrange that the terms
$$\frac{1}{d!} \ell^{1+d}\left(\gamma_1,q_{j_1}x^{D_{j_1}},\ldots,q_{j_d}x^{D_{j_d}}\right)$$
count precisely the same curves $u$ with the same weights.
Thus (up to a harmless passage to a subquotient complex to remove orbits outside of $X$, which we elide for the purposes of this section), we obtain a chain isomorphism
\begin{equation}
    \label{eq:Phi}
\Phi:\left(SC^*(X;\bk) \otimes R,\ell^1_{\alpha_0}\right) \xrightarrow{\sim} \left(QC^*(M;R),d^{QC}\right).
\end{equation}

We also have that the inclusions
\begin{equation}
\label{eq:SC in g qiso}
    \left(SC^*(X;\bk) \otimes R,\ell^1_{\alpha_\tau}\right) \hookrightarrow \left(\mathfrak{g} \otimes R,\ell^1_{\alpha_\tau}\right)
\end{equation}
are quasi-isomorphisms for all $\tau$, as the quotient complexes are manifestly acyclic.

On the other hand, the Maurer--Cartan elements $\alpha_0$ and $\alpha_1 = \beta$ determine quasi-isomorphic deformations
\begin{equation}
    \label{eq:gauge eq qiso}
    \left(\mathfrak{g} \otimes R,\ell^1_{\alpha_0}\right) \simeq \left(\mathfrak{g} \otimes R,\ell^1_{\beta}\right)
\end{equation}
due to the fact they are gauge equivalent, by a standard argument from Maurer--Cartan theory.
Theorem \ref{thm} now follows by composing the isomorphisms \eqref{eq:Phi}, \eqref{eq:SC in g qiso} for $\tau = 0$, \eqref{eq:gauge eq qiso}, and \eqref{eq:SC in g qiso} for $\tau = 1$. 

\textbf{Acknowledgements:} The authors are grateful to Shaoyun Bai, Daniel Pomerleano, Paul Seidel, Umut Varolgunes, and Iam Zemke for encouragement and helpful conversations and comments. They also thank the referee for helpful comments. M.E.A. and N.S. were supported by an ERC Starting Grant (award number 850713 -- HMS). N.S. was also supported by a Royal Society University Research Fellowship, the Leverhulme Prize, and a Simons Investigator award (award number 929034).

\section{Geometric Background}

In this section we introduce our conventions concerning quantum cohomology and Floer cohomology (we refer to \cite{mcduffsalamon,salamon} for additional background on these areas), and introduce the Hamiltonians which will play a key role in our main argument.

\subsection{Quantum Cohomology}

Recall that we assume $(M,\omega)$ to be a compact symplectic manifold which is positively monotone, so that 
\begin{equation*}
[\omega] = 2\kappa c_1(M) \quad \text{in}\ H^2(M,\mathbb{R}), \quad \kappa>0.
\end{equation*}
We define the graded Novikov ring $\Lambda^{\gr}_{\omega}$ to be the group ring $\bk[\pi_2(M)]$, i.e., the algebra generated by formal symbols $\{ e^A \ | \ A\in \pi_2(M)\}$, subject to the relation $e^A\cdot e^B = e^{A+B}$. We equip it with the integer grading $\abs{e^A} = 2c_1(A)$. 
We observe that it is \emph{degreewise} complete with respect to the action functional
\begin{equation}
    A_{\omega}(e^A) = \omega(A)
\end{equation}
(because the action is constant, by monotonicity). 
We note that $\Lambda^{\gr}_{\omega}$ is similar to the Novikov ring in \cite[\S 5]{Tian95}. 
It is however different from the more common Novikov $\Lambda_{\omega}$ obtained by taking the full completion, as decribed in \cite[\S 2]{PSS}; the latter is not a graded ring.

Let $H:S^1\times M\rightarrow \mathbb{R}$ be a time-dependent Hamiltonian. Its corresponding vector field $X_{H}$ is defined by the equation
\begin{equation}
    \iota_{X_H} \omega = -dH.
\end{equation}
We note that our convention here is identical to \cite{PL-theory}, but different from other references we use such as \cite{PSS} or \cite{BSV}.

The Hamiltonian $H$ is said to be non-degenerate if the time $1$ flow $\phi_1^{X_H}:M\rightarrow M$ has a graph $\text{Graph}(\phi_1^{X_H})\subseteq M\times M$ which is transverse to the diagonal $\Delta_M \subseteq M\times M$. This property ensures that the set of $1$-periodic orbits is a transversely cut compact manifold of dimension $0$.

A \emph{capped orbit} is a pair $(\gamma,u)$ where $\gamma$ is a $1$-periodic orbit of $X_{H}$, and $u$ is an equivalence class of maps $u:\mathbb{D}\rightarrow M$ such that $u|_{\partial \mathbb{D}} = \gamma$, where two such maps $u_1$ and $u_2$ are equivalent if 
\begin{equation*}
     u_1\#-u_2  = 0
\end{equation*}
in $\pi_2(M)$. 
We denote the collection of all capped $1$-periodic orbits by $\mathcal{P}_H$. It is exactly the set of critical points of the action functional
\begin{equation}\label{eq: action functional}
    A_H(\gamma,u) = -\int_{\mathbb{D}} u^*\omega + \int_{\partial\mathbb{D}} \gamma^*H.
\end{equation}

Assuming that $H$ is non-degenerate, we can associate with each capped orbit $x = (\gamma,u)$ an orientation line $o_x$ and its $\bk$-normalization $\abs{o_x}_{\bk}$, see \cite{ES1}.
Floer's complex associated with $H$ is the \emph{graded} completion of $\bigoplus_{x\in \mathcal{P}_H} \abs{o_x}_{\bk}$ with respect to the action filtration $A_H$,
\begin{equation}\label{Floer complex}
    CF^*(M,H) = \bigoplus_{i\in \mathbb{Z}}\overline{\bigoplus_{\substack{x\in \mathcal{P}_H\\\abs{x}=i}} \abs{o_x}_{\bk}}^{A_H}.
\end{equation}
It is a $\mathbb{Z}$-graded module over the graded Novikov ring $\Lambda^{\gr}_{\omega}$, where the module structure is
\begin{equation}
    e^A\cdot(\gamma,u) = (\gamma,u\# -A).
\end{equation}
We note in particular that
\begin{equation}\label{eq:ind_act_change}
    \begin{cases}
       &\abs{e^A\cdot(\gamma,u)} = \abs{(\gamma,u)} + 2c_1(A)\\
       &A_H(e^A\cdot(\gamma,u)) = A_H(\gamma,u) + \omega(A).
    \end{cases}
\end{equation}
Floer's complex is also equipped with a differential which counts maps $u:\mathbb{R}\times S^1 \rightarrow M$ satisfying the pseudo-holomorphic curve equation
\begin{equation}\label{Floer trajectory}
    \begin{cases}
       & \partial_s u + J_t (\partial_t u - X_{H}(u)) = 0\\
       & E(u)\defeq \int \abs{\partial_s u}^2 < +\infty,
    \end{cases}
\end{equation}
where $J_t$ is a generic $\omega$-compatible almost complex structure.
For each \emph{distinct} pair $x_-,x_+ \in \mathcal{P}_H$, define the moduli space
\begin{equation}\label{Floer trajectories}
    \mapsms(x_-,x_+) = \left\{ u\  \text{satisfies}\ \eqref{Floer trajectory},\ u_- \# [u] = u_+ \ \text{and}\ \ \lim_{s\rightarrow \pm\infty} u(s,t) = \gamma_{\pm}\right\}/\mathbb{R},
\end{equation}
where the action of $\mathbb{R}$ is by translation in the $s$-direction. Floer's differential on \eqref{Floer complex} is
\begin{equation}\label{eq: Floer differential}
    dx_+ = \sum_{x_-\neq x_+}\# \mapsms(x_-,x_+)\cdot x_-.
\end{equation}
Here, $\# \mapsms(x_-,x_+)$ is notation for the signed count of isolated points in this moduli space.
Moreover, for any $\Lambda^{\gr}_{\omega}$-module $R$, we define a Floer complex with coefficients in $R$,
\begin{equation}
    CF^*(M,H;R) = CF^*(M,H)\otimes_{\Lambda^{\gr}_{\omega}} R.
\end{equation}
Its cohomology $HF^*(M,H;R)$ is independent of $H$ and $J$, and we have a PSS-isomorphism (see \cite{PSS})
\begin{equation}
   HF^*(M,H;R)\cong H^*(M,R). 
\end{equation}
Under this isomorphism, the quantum product on $H^*(M,\Lambda^{\gr}_{\omega})$ corresponds to the pair-of-pants product on $HF^*(M,H)$. Throughout this work, we refer to Floer cohomology of a \emph{compact positive} symplectic manifold $(M,\omega)$ as quantum cohomology.

\subsection{Simple crossings divisors}\label{sec:scd}

Recall that a symplectic divisor $D\subseteq M$ is a (real) codimension 2 compact submanifold such that $\omega\vert_{D}$ is non-degenerate. A collection $(D_i)_{i=1}^N$ of symplectic divisors is said to be transverse if, for any subset $I\subseteq \{1,\dots,N\}$, the intersection
\begin{equation}
    D_I =\bigcap_{i\in I} D_i
\end{equation}
is a transversely cut smooth submanifold. In particular, the normal bundle of $D_I$ is given by 
\begin{equation}\label{normal bundle iso}
    N_M D_I = \bigoplus_{i\in I } N_MD_i\vert_{D_I}.
\end{equation}

For each $I$, the normal bundle $N_M D_I$ acquires a symplectic orientation using the normal bundle sequence
\begin{equation}
   0\rightarrow TD_I \rightarrow TM\vert_{D_I} \rightarrow N_M D_I \rightarrow 0,
\end{equation} 
where $TD_I$ and $TM\vert_{D_I}$ are oriented using the non-degenerate $2$-form $\omega$. However, this collection of orientations need not be coherent. Following \cite[\S 2]{TMZ}, the union $D = (D_i)_{i=1}^N$ is said to be 
\begin{itemize}
    \item[(i)] a simple crossings divisor if the isomorphisms \eqref{normal bundle iso} respect the symplectic orientation for all $I\subseteq \{1,\dots,N\}$.
    \item[(ii)] orthogonal if for every $i\neq j$, and every $x\in D_i\cap D_j$, the $\omega$-normal bundle $(T_x D_i)^{\perp \omega} \subseteq TM$ is a vector subspace of $T_x D_j$.
\end{itemize}

Let $D= (D_i)_{i=1}^N$ be an orthogonal simple crossings divisor such that
\begin{equation}
    2c_1(M) = \lambda_1 \pd(D_1) + \dotsi + \lambda_N \pd(D_N),
\end{equation}
where $\pd: H_{2n-2}(M,\mathbb{Q})\rightarrow H^2(M,\mathbb{Q})$ is Poincar\'e duality and $\lambda_i \in \mathbb{Q}_{>0}$ are positive rational numbers. This condition essentially says that $D$ supports (a multiple of) an effective anti-canonical divisor. 

Consider the exact symplectic manifold $X = M\backslash D$. By Lefschetz duality, the homology group $H_2(M,X,\mathbb{Z})$ is freely generated by small discs $(u_i)_{i=1}^N$ where each $u_i:\disk \rightarrow (M,X)$ intersects $D_i$ once transversely at $0\in\mathbb{D}$, and $u_i$ is disjoint from all $(D_j)_{j\neq i}$. Using the universal coefficient theorem, we have an isomorphism
\begin{equation}
    H^2(M,X,\mathbb{R}) \cong H_2(M,X,\mathbb{R})^{\vee}.
\end{equation}
Let $\pd^{\rel}(D_i)\in H^2(M,X,\mathbb{R})$ be the dual basis to $(u_i)_{i=1}^N$, and let
\begin{equation}
    [\omega]^{\rel}= \kappa_1 \pd^{\rel}(D_1) + \dotsi + \kappa_n \pd^{\rel}(D_N),\quad\text{where}\ \kappa_i = \kappa\cdot\lambda_i.
\end{equation}
The class $[\omega]^{\rel}$ is a lift of $[\omega]$, and similarly the classes $\pd^{\rel}(D_i)$ are lifts of $\pd(D_i)$ under the map
\begin{equation}
    H^2(M,X,\mathbb{R}) \rightarrow H^2(M,\mathbb{R}).
\end{equation}

Recall that we also have a de Rham chain model for relative cohomology groups given by
\begin{equation}
    \Omega(M,X,\mathbb{R}) = \text{Cone}(\Omega(M,\mathbb{R}) \rightarrow \Omega(X,\mathbb{R})) [-1].
\end{equation}
In particular, $\omega^{\rel}$ may be thought of as a pair $(\omega,-\theta')$ where $\theta'\in \Omega^1(X)$ such that $d\theta' = \omega$. By definition of this relative class, we have
\begin{equation}\label{wrapping number}
    \kappa_i = \int_{u_i} \omega - \int_{\partial u_i} \theta'.
\end{equation}
This is called the wrapping number of $\omega^{\rel}$ relative to the divisor $D_i$ and it is invariant under exact perturbations of $\theta'$. In the terminology of McLean (see \cite[Lemma 5.17, Appendix A]{McLean-growth-rate}), $(X,\theta')$ is a convex symplectic manifold because all $\kappa_i$ are positive.

\subsection{The Liouville structure}

\subsubsection{When $D$ is smooth}

Let $D\subseteq M $ be a \emph{smooth} symplectic divisor. By a Moser type argument (see \cite[Lemma 5.9]{McLean-growth-rate}), there is a neighbourhood $UD\subseteq M$ of $D$ which comes with a symplectic fibration map $\pi: UD \rightarrow D$, such that the fibers are symplectomorphic to a standard disc $D_{R} = \{r\leq R\}\subseteq\mathbb{C}$ of \emph{area} $R$. In our notation, $\mathbb{C}$ is given the symplectic form 
\begin{equation}
    \omega_{\mathbb{C}} = \rho d\rho\wedge d\theta \defeq dr\wedge d\phi,
\end{equation}
where $(\rho,\theta)$ are the usual polar coordinates, whereas $r=\pi\rho^2$ and $\phi =\theta/2\pi$. 
The radial coordinate $r:UD\rightarrow [0,R)$ generates a Hamiltonian vector field $X_r = \partial_{\phi}$ which satisfies the following properties:
\begin{itemize}\label{radial functions}
    \item[(i)] The field $X_r$ integrates to an $\mathbb{R}/\mathbb{Z}$-action on $UD$.
    \item[(ii)] The Hamiltonian action fixes all points of $D$.
    \item[(iii)] The Hamiltonian action is free on $UD\backslash D$.
\end{itemize}

With this setup, let $\theta'$ be a primitive for $\omega$ in $M\backslash D$, and let $F$ be a fiber of $\pi:UD\rightarrow D$. Since $F\backslash\{0\} \cong \mathbb{C}^*$, we must have a constant $\kappa$, and a function $f:F\backslash\{0\}\rightarrow \mathbb{R}$ such that
\begin{equation}
\theta'\vert_{F\backslash\{0\}} = rd\phi - \kappa d\phi + df.
\end{equation}
This constant $\kappa$ is the same as the wrapping number from \eqref{wrapping number}.
Therefore, if $Z$ is the Liouville field of $\theta = \theta'-df$, we have
\begin{equation}
    Z(r) = \theta(\partial_\phi) = r-\kappa.
\end{equation}
It follows that the Liouville field $Z$ points outwards along the boundary of $M\backslash UD$, provided that $\kappa>0$ and $R$ is sufficiently small.

\subsubsection{When $D$ is orthogonal simple crossings}

We recall the construction of the Liouville structure from \cite[\S 5]{McLean-growth-rate} and \cite[\S 2.3]{BSV}. Assume that we have a lift $\omega^{\rel} = (\omega,-\theta')$ of $\omega$ such that

\begin{equation}
    [\omega^{\rel}] = \sum_{i=1}^N \kappa_i \pd^{\rel}(D_i)\quad \text{in}\ H^2(M,X,\mathbb{R}),
\end{equation}
where all $\kappa_i$ are positive.

By \cite[Lemma 5.14]{McLean-growth-rate}, it is possible to construct open neighbourhoods $UD_i$ of $D_i$ with radial functions $r_i:UD_i\rightarrow [0,R)$ such that for all $i,j$,
\begin{itemize}
    \item[(i)] The intersection $UD_i\cap UD_j$ is invariant under the $\mathbb{R}/\mathbb{Z}$-action of $X_{r_i}$.
    \item[(ii)] Along the intersection $UD_i\cap UD_j$, the Hamiltonians $r_i$ and $r_j$ Poisson commute.
\end{itemize}

The assumptions on $D$ ensure that we can construct compatible neighbourhoods $UD_I$ of $D_I$ for each subset $I\subseteq \{1,\dots, N\}$. The fiber of the projection $\pi_I:UD_I \rightarrow D_I$ is a polydisc $F_I = \prod_{i\in I} D_R$ which carries a torus action by the commuting vector fields $(X_{r_i})_{i\in I}$. McLean's lemma constructs an exact perturbation $\theta$ of $\theta'$ such that on each punctured fiber $F_I^* = \prod_{i\in I} (D_R\backslash \{0\})$,
\begin{equation}
    \theta\vert_{F_I^*} = \sum_{i\in I} (r_i-\kappa_i)d\phi_i.
\end{equation}

In particular, if $Z$ is the Liouville vector field of $\theta$, we have that 
\begin{equation}
    Z(r_i) = r_i - \kappa_i\quad \text{on } UD_i, \text{ for  all } i = 1,2,\dots,N.
\end{equation}
 Since all $\kappa_i$ are positive, there is a positive parameter $R_0\in [0,\min_i \kappa_i)$ such that $Z$ points outwards along the boundary of $M\backslash \cup_{i=1}^N \{r_i \le R\}$ for all $R \in [0,R_0)$.

Let $\mathbb{L}$ be the $Z$-skeleton of $X$, i.e. the collection of points $p\in X$ such that the flow $\phi^Z_t (p)$ is defined for all time $t$. Any point $p\in X\backslash\mathbb{L}$ has a Liouville flow that eventually enters one of the open sets $UD_i$. Define $UD_i^{\max}$  to be the open set consisting of $D_i$ together with all the points in $X$ whose Liouville flow enters $UD_i$. Note that $\cup_i UD_i^{\max} = M\backslash \mathbb{L}$, and that each $r_i:UD_i \rightarrow [0,R)$ extends smoothly to $r_i^{\max}: UD_i^{\max} \rightarrow [0,\kappa_i)$ using the Liouville flow. One checks that the Hamiltonians $r_i^{\max}$ still define commuting $\mathbb{R}/\mathbb{Z}$-actions on $UD_i^{\max}$.

\subsection{The radial coordinate} \label{subsec: radial coord}

Next, we briefly recall a construction from \cite[\S 4]{BSV} of a \emph{smooth} radial function $\rho: X\rightarrow \mathbb{R}$ such that
\begin{equation}
Z(\rho) = \rho.
\end{equation}
A natural candidate $\rho^0$ is given by $e^{-t}$, where $t(x)$ is the blow-up time of the flow $\phi^{Z}_t(x)$. This is well-defined for any $x \in X\backslash \mathbb{L}$ where $\mathbb{L}$ is the $Z$-skeleton of $X$. For instance if $x \in UD_i$, then $\rho^0(x) \leq 1 - r_i(x)/\kappa_i$. In fact, it is not difficult to see that
\begin{equation}
    \rho^0(x) = \max\left\{ 1-\frac{r_i(x)}{\kappa_i}\ \bigg|\  1\leq i\leq N \ \text{such that}\ x\in UD_i^{\max}\right\}.
\end{equation}
We also extend $\rho^0$ continuously to all of $M\backslash \mathbb{L}$ by setting $\rho^0\vert_{D} = 1$. 

In \cite[\S 4]{BSV}, the authors construct a regularization $\rho^R$ of $\rho^0$ depending on a smoothing parameter $R\in (0,R_0)$ such that each $\rho^R:M\backslash\mathbb{L}\rightarrow \mathbb{R}$ is a smooth function satisfying $Z(\rho^R) = \rho^R $ on $X\backslash\mathbb{L}$, and $\rho|_{D}\geq 1$. The construction of this smoothing is rather technical, so we only recall its most important properties and we refer the reader to \cite{BSV} for proofs.

\begin{lemma}\cite[Lemma 4.14]{BSV}
    There exist smooth functions $\nu_i: M\backslash\mathbb{L} \rightarrow \mathbb{R}_{\geq 0}$ for $i\in\{1,\dots,N\}$, such that $\nu_i$ is supported in $UD_i^{\max}$ and for all $p\in M$
    \begin{equation}
        d_p\rho = -\sum_{i=1}^N \nu_i(p) d_p r_i^{\max}. 
    \end{equation}
    Moreover, $d\nu_i(X_{r_j}) = 0 $ for all $i,j \in \{1,\dots, N\}$.
\end{lemma}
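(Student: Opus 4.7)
The plan is to obtain the $\nu_i$ directly from the smoothing construction used to produce $\rho$. Recall the unsmoothed model is $\rho^0(p) = \max_{i:\, p\in UD_i^{\max}}(1 - r_i^{\max}(p)/\kappa_i)$, and each summand $f_i \defeq 1 - r_i^{\max}/\kappa_i$ is positively homogeneous of degree one under $Z$, since $Z(r_i^{\max}) = r_i^{\max} - \kappa_i$ gives $Z(f_i) = f_i$. First I would verify that near any $p \in M\setminus \mathbb{L}$, only indices $i$ with $p\in UD_i^{\max}$ contribute to values of $f_i$ close to $\rho^0(p)$; this lets one extend $f_i$ off $UD_i^{\max}$ by an arbitrarily negative constant (effectively $-\infty$) without altering $\rho^0$ pointwise, so that $f_1,\dots,f_N$ can be treated as defining a single $\mathbb{R}^N$-valued map on $M\setminus \mathbb{L}$.

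Next I would fix a smooth surrogate $M_\epsilon:\mathbb{R}^N\to\mathbb{R}$ for the max, required to satisfy: (a) positive homogeneity of degree one in all coordinates simultaneously, so that $\rho\defeq M_\epsilon(f_1,\dots,f_N)$ inherits $Z(\rho)=\rho$; (b) agreement with $\max$ outside an $\epsilon$-neighbourhood of the large diagonal, with $\partial_i M_\epsilon \equiv 0$ whenever $x_i$ falls sufficiently below $\max_j x_j$; and (c) $\partial_i M_\epsilon \ge 0$ with $\sum_i \partial_i M_\epsilon = 1$. A standard construction by convolving the piecewise-linear function $\max$ with a suitable compactly supported mollifier on the $(N{-}1)$-simplex produces such an $M_\epsilon$, and this is essentially what the regularization $\rho^R$ of [BSV, \S 4] provides.

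With $\rho = M_\epsilon(f_1,\dots,f_N)$ in hand, I would read off the decomposition from the chain rule,
\begin{equation*}
d\rho \;=\; \sum_{i=1}^N (\partial_i M_\epsilon)\, df_i \;=\; -\sum_{i=1}^N \tfrac{1}{\kappa_i}\,(\partial_i M_\epsilon)\, dr_i^{\max},
\end{equation*}
and set $\nu_i \defeq \tfrac{1}{\kappa_i}\, \partial_i M_\epsilon(f_1,\dots,f_N)$. Non-negativity of $\nu_i$ follows from (c), and support inside $UD_i^{\max}$ follows from (b) together with the observation that as $p$ approaches $\partial UD_i^{\max}$ the value $f_i(p)$ drops strictly below the max. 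The Poisson-commutativity property is then essentially automatic: each $\nu_i$ is by construction a smooth function of the tuple $(r_1^{\max},\dots,r_N^{\max})$ on the common domain, and since the radial Hamiltonians pairwise Poisson-commute, the flow of $X_{r_j}$ preserves every $r_k^{\max}$, giving $d\nu_i(X_{r_j}) = X_{r_j}(\nu_i) = 0$.

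The main technical obstacle is ensuring that $\nu_i$ is genuinely $C^\infty$ across $\partial UD_i^{\max}$, where it must match its zero extension to all orders. This forces $M_\epsilon$ to be chosen so that $\partial_i M_\epsilon$ vanishes to infinite order whenever the $i$-th argument is sufficiently below the others, and it must be done uniformly over $N$ indices while preserving exact degree-one homogeneity; with care in the choice of mollifier this is standard, but it is precisely this bookkeeping that the technical construction of $\rho^R$ in [BSV, \S 4] exists to handle.
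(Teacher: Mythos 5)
This statement is quoted directly from \cite[Lemma~4.14]{BSV}; the present paper offers no proof and explicitly refers the reader to \cite{BSV}, noting the smoothing construction is ``rather technical.'' So there is no internal argument to compare against, and your proposal must be assessed on its own.

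Your reconstruction captures the correct overall structure: represent $\rho$ as $M_\epsilon(f_1,\dots,f_N)$ for a smooth, positively $1$-homogeneous surrogate of $\max$, read off $\nu_i = \kappa_i^{-1}\partial_i M_\epsilon(f_1,\dots,f_N)$ from the chain rule, and deduce $d\nu_i(X_{r_j})=0$ from the fact that $\nu_i$ is a smooth function of the tuple $(r_1^{\max},\dots,r_N^{\max})$, each of which is preserved by $X_{r_j}$ by Poisson-commutativity. That last argument is correct as you give it, and the chain-rule identification of the decomposition is also correct. (The requirement $\sum_i\partial_i M_\epsilon = 1$ is not actually needed; only $\partial_i M_\epsilon\geq 0$ and Euler's relation for $1$-homogeneity, which already give $Z(\rho)=\rho$.)

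The gap is in your specification of $M_\epsilon$: conditions (a) and (b), as written, are mutually inconsistent. If $M_\epsilon$ is positively $1$-homogeneous then each $\partial_i M_\epsilon$ is $0$-homogeneous, so its zero set is a \emph{cone}; an absolute ``$\epsilon$-neighbourhood of the large diagonal'' is not a cone. The cutoff threshold must scale conically, e.g.\ $\partial_i M_\epsilon\equiv 0$ when $x_i < \max_j x_j - \epsilon\abs{x}$, or equivalently one smooths on a slice of the combined scaling-and-translation action and extends by the symmetry. This is not cosmetic: as $p\to\mathbb{L}$, all of $\rho^0(p)$ and the $f_j(p)$ tend to zero together, so with a \emph{fixed}-width cutoff the region $\{\partial_i M_\epsilon\neq 0\}$ swallows $\partial UD_i^{\max}$ near the skeleton, and the support condition $\mathrm{supp}\,\nu_i\subset UD_i^{\max}$ fails. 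With the conical cutoff, $\nu_i$ vanishes wherever $f_i/\rho^0 < 1-\epsilon'$, and since $f_i\to 0$ while $\rho^0$ stays bounded below near any point of $\partial UD_i^{\max}\cap(M\setminus\mathbb{L})$, the support condition does hold. Relatedly, your final worry about arranging $\partial_i M_\epsilon$ to ``vanish to infinite order'' is a red herring once the cutoff is stated correctly: if $\partial_i M_\epsilon$ is \emph{identically} zero on an open cone containing the image of $(f_1,\dots,f_N)$ near $\partial UD_i^{\max}$, then $\nu_i$ is identically zero on an open set there, and its extension by zero to $M\setminus\mathbb{L}$ is automatically $C^\infty$ with no infinite-order matching to check.
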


The last property in the Lemma above ensures that $\nu_i$ is constant along the Hamiltonian orbits of $\rho$. More generally, let $h:\mathbb{R}\rightarrow \mathbb{R}$ be a smooth function which is constant near $0$. Then $h(\rho):M\rightarrow \mathbb{R}$ is a globally defined smooth Hamiltonian, and we denote its flow by $\Phi^{h(\rho)}_t$. A short calculation shows that $\nu^h_i \defeq h'(\rho)\nu_i$ is constant along the flow $\Phi^{h(\rho)}_t$. In fact, we can calculate the whole flow in terms of $\nu^h \defeq (\nu_1^h,\dots,\nu_N^h): M\rightarrow \mathbb{R}^N$. For each $p\in M$, define 
\begin{equation}
    I_p = \{ i\ : \ \nu_i(p) \neq 0 \}
\end{equation}
so that $p\in UD^{\max}_{I_p}$. Then, the time-$1$ flow of $p$ is given by
\begin{equation}
        \Phi^{h(\rho)}_1(p) = -\nu^h(p) \cdot p, 
\end{equation}
where $\nu^h(p)$ is projected to $(\mathbb{R}/\mathbb{Z})^{I_p}$ and it acts on $p \in UD^{\max}_{I_p}$ using the torus action.

\begin{corollary}\cite[Cor 4.15]{BSV}
    The $1$-periodic orbits of $h\circ\rho$ correspond to points $p\in M$ such that for all $i=1,\dots,N$,
    \begin{equation}
        \nu_i^h(p)\in \mathbb{Z}\quad\text{or}\quad p\in D_i.
    \end{equation}
\end{corollary}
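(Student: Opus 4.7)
The plan is to deduce the corollary directly from the explicit description of the time-$1$ flow $\Phi^{h(\rho)}_1(p) = -\nu^h(p) \cdot p$ stated just before it. A $1$-periodic orbit of $h \circ \rho$ is the same thing as a fixed point of $\Phi^{h(\rho)}_1$, so the question reduces to determining for which $p \in M$ the group element $\nu^h(p) \in (\mathbb{R}/\mathbb{Z})^{I_p}$ fixes $p$ under the torus action on $UD^{\max}_{I_p}$.

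First, I would isolate the points with $I_p = \emptyset$: in this case $p$ lies outside all of the $UD^{\max}_i$, the flow $\Phi^{h(\rho)}_t$ is constant near $p$ (since $\rho$ is locally constant there, or more precisely $d\rho = -\sum \nu_i \, dr_i^{\max}$ vanishes), so $p$ is trivially $1$-periodic; and for every $i$ the vanishing $\nu_i(p)=0$ gives $\nu_i^h(p)=0\in\mathbb{Z}$, so the stated condition is vacuous. This disposes of the ``boundary'' case and explains why we only need to worry about indices $i \in I_p$.

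Next, for $p\in UD^{\max}_{I_p}$ with $I_p \neq \emptyset$, the key step is to unpack the torus action factor by factor. The neighborhood $UD^{\max}_{I_p}$ is fibered by polydiscs $F_{I_p}=\prod_{i\in I_p} D_R$ on which the commuting Hamiltonians $r_i^{\max}$ generate the standard rotation of each disc factor. Thus $\nu^h(p)$ acts coordinatewise, and the $i$-th rotation by angle $\nu^h_i(p) \in \mathbb{R}/\mathbb{Z}$ fixes the $i$-th disc coordinate of $p$ if and only if either that angle is trivial, i.e.\ $\nu_i^h(p)\in\mathbb{Z}$, or the $i$-th disc coordinate is the origin, i.e.\ $p\in D_i$. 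Taking the conjunction over $i\in I_p$ and appending the automatic condition for $i\notin I_p$ gives exactly the stated criterion.

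The argument is essentially an unpacking of the preceding formula, so I would not expect a real obstacle; the only point requiring some care is checking that the torus action on the polydisc really is the product of standard disc rotations (which is built into the construction via property (ii) of the radial functions, namely that $r_i$ and $r_j$ Poisson commute on $UD_i\cap UD_j$). Once that is in hand, everything else is bookkeeping about which factors are allowed to fix $p$ because the rotation is trivial versus because $p$ lies in the fixed set $D_i$ of the $i$-th circle action.
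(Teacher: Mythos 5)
Your proof is correct and is essentially the intended derivation: the paper cites the result to \cite{BSV} without reproducing a proof, but the corollary is placed immediately after the formula $\Phi^{h(\rho)}_1(p) = -\nu^h(p)\cdot p$, and the expected argument is exactly yours — read off fixed points of the time-$1$ map factor by factor in the polydisc fibre of $UD^{\max}_{I_p}$, where rotation by $\nu_i^h(p)$ fixes the $i$-th coordinate iff the angle is integral or the coordinate is the origin (i.e.\ $p\in D_i$), and then absorb the indices $i\notin I_p$ since $\nu_i(p)=0$ forces $\nu_i^h(p)=0\in\mathbb{Z}$. One small imprecision to flag: $I_p=\emptyset$ does not literally force $p$ to lie outside every $UD^{\max}_i$ (the $\nu_i$ are merely supported in $UD^{\max}_i$ and can vanish inside), but your parenthetical self-correction — that $d_p\rho = -\sum_i\nu_i(p)\,d_pr_i^{\max}$ then vanishes, so $X_{h(\rho)}$ vanishes at $p$ and $p$ is a constant orbit — is the right and sufficient observation.
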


\subsection{Action-Index computations}

Let $\gamma$ be a $1$-periodic orbit of $h\circ\rho$. Notice that the function $\nu^h$ is constant along $\gamma$. Moreover, if $h'\geq 0$, then the components $\nu^h_i$ are non-negative and may be thought of as the winding numbers of $\gamma$ around each $D_i$. We set,
\begin{equation}
I_{\gamma} =  \{ i \ : \ \nu^h_i(\gamma) \neq 0 \}.
\end{equation}

The orbit $\gamma$ has an associated outer cap $u_{\text{out}}$ which is unique up to homotopy relative to the boundary. The construction of this cap is detailed in \cite[Lemma 2.14]{BSV}. If $\gamma$ is constant, $u_{\text{out}}$ is just the constant cap. Otherwise, the cap $u_{\text{out}}$ is obtained by flowing $\gamma$ using the Liouville vector field towards $D_{I_{\gamma}}$ and then capping the resulting cylinder in a small contractible chart. The action and index of such orbits is computed in \cite[\S 4.8]{BSV}.

\begin{lemma}
    With respect to the outer cap, we have that
    \begin{align}
        A(\gamma,u_{out}) &= h(\rho(\gamma)) + \sum_{i=1}^N \nu_i^h(\gamma)r_i^{\max}(\gamma),\\
        CZ(\gamma,u_{\text{out}}) &= 2\sum_{i=1}^N \lceil \nu_i^h(\gamma)\rceil + \frac{1}{2}\text{sign}(\text{Hess}_{\gamma}),
    \end{align}
    where $\text{Hess}_{\gamma}$ is a particular symmetric matrix associated with $\gamma$.
\end{lemma}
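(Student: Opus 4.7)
\emph{Proof proposal.} The plan is to work locally near $D_{I_\gamma}$, using McLean's polydisc coordinates, where all geometry is explicit. For the action formula, the boundary term in $A_H(\gamma, u) = -\int u^*\omega + \int_{\partial\mathbb{D}} \gamma^* H$ is immediate: since the Hamiltonian flow of $h(\rho)$ preserves $\rho$, the function $H = h \circ \rho$ is constant along $\gamma$, so $\int_{\partial\mathbb{D}} \gamma^* H = h(\rho(\gamma))$.

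For the symplectic area term, I would decompose the outer cap as a cylindrical piece $C$ obtained by applying the forward Liouville flow to $\gamma$ until entering a small polydisc chart near $D_{I_\gamma}$, together with a fiber-disc cap $\Delta$ inside this chart. Since $C \subset X$, I can apply Stokes with the primitive $\theta$, using the explicit formula $\theta|_{F_I^*} = \sum_{i \in I_\gamma}(r_i - \kappa_i) d\phi_i$ from McLean's lemma together with the winding numbers $\nu_i^h(\gamma)$ around each $D_i$. A short calculation gives $\int_C \omega = \sum_i \nu_i^h(\gamma)(r_i^{\max}(\gamma) - r_i^L)$, where $r_i^L$ are the small radii at the inner boundary of $C$. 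The cap $\Delta$ is, up to the natural torus action, a product over $i \in I_\gamma$ of $\nu_i^h(\gamma)$-fold branched covers of the fiber disc of area $r_i^L$, contributing $\sum_i \nu_i^h(\gamma) \cdot r_i^L$ to the area. The $r_i^L$ terms cancel, yielding $-\int u_{\text{out}}^*\omega = \sum_i \nu_i^h(\gamma) r_i^{\max}(\gamma)$ after accounting for the orientation convention on $u_{\text{out}}$.

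For the Conley--Zehnder index, I would choose a symplectic trivialization of $u_{\text{out}}^* TM$ that respects the local splitting $TM|_{D_{I_\gamma}} \cong TD_{I_\gamma} \oplus \bigoplus_{i \in I_\gamma} N_M D_i$. In each normal factor $N_M D_i$, the linearized flow along $\gamma$ is rotation by angle $2\pi \nu_i^h(\gamma)$; in the tangential direction $TD_{I_\gamma}$, it is generated by the effective quadratic Hamiltonian obtained from $h \circ \rho$ by the fiberwise torus quotient, so the linearized flow is determined by $\text{Hess}_\gamma$. The trivialization coming from the branched-cover structure of $u_{\text{out}}$ in the $i$-th fiber direction contributes an integer twist equal to $\lceil \nu_i^h(\gamma) \rceil$, giving the normal contribution $2\lceil \nu_i^h(\gamma) \rceil$. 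The tangential part contributes $\tfrac{1}{2}\text{sign}(\text{Hess}_\gamma)$ by the standard CZ formula for a quadratic autonomous Hamiltonian at a nondegenerate critical point; summing these contributions yields the stated formula.

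The main obstacle is fixing the correct convention for the trivialization of $u_{\text{out}}^* TM$ near the branch locus so that the rotation count in each normal factor is precisely $\lceil \nu_i^h(\gamma) \rceil$ rather than an adjacent integer. This hinges on the precise definition of $u_{\text{out}}$ as a branched cover, and reflects the fact that orbits lying in $D_i$ can carry a non-integer $\nu_i^h$. The bookkeeping is exactly that carried out in \cite[\S 4.8]{BSV}, which I would appeal to for the detailed verification.
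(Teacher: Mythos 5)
The paper does not give a proof of this lemma: it is stated with a pointer, in the preceding paragraph, to \cite[\S 4.8]{BSV} (``The action and index of such orbits is computed in \cite[\S 4.8]{BSV}''), and no proof environment follows. So there is no in-paper argument to compare your sketch against.

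That said, your outline is consistent with the construction of $u_{\text{out}}$ as described just before the lemma (Liouville flowline from $\gamma$ toward $D_{I_\gamma}$, then a cap in a polydisc chart). The decomposition into a cylinder $C$ plus a polydisc cap $\Delta$, Stokes with McLean's explicit primitive $\theta\vert_{F_I^*} = \sum_{i\in I}(r_i - \kappa_i)\,d\phi_i$, and the cancellation of the intermediate radii $r_i^L$ is the natural way to obtain the area term $\sum_i \nu_i^h(\gamma)\,r_i^{\max}(\gamma)$; the constancy of $h\circ\rho$ along $\gamma$ gives the $h(\rho(\gamma))$ term. Similarly, the normal/tangential splitting of $u_{\text{out}}^*TM$ along $D_{I_\gamma}$, with the loop-rotation count $2\lceil\nu_i^h\rceil$ in each normal factor and the quadratic-Hamiltonian signature $\tfrac12\mathrm{sign}(\text{Hess}_\gamma)$ in the $TD_{I_\gamma}$ direction, is the standard mechanism for such $CZ$ computations. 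You correctly flag that the delicate point is pinning down the trivialization convention so the normal contribution is $\lceil\nu_i^h\rceil$ rather than $\lfloor\nu_i^h\rfloor$ (which matters exactly when $\gamma\subset D_i$ and $\nu_i^h\notin\mathbb{Z}$), and you defer to \cite[\S 4.8]{BSV} for that bookkeeping — which is the same resolution the paper itself takes. In effect, your proposal is an expanded account of the cited computation rather than a competing argument, and I see no step in it that would fail.
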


Choose $0<\eta \ll 1$, and set $\sigma^+ = 1-\eta$, $\sigma = 1-2\eta$, $\sigma^- = 1-3\eta$. 
Because the Liouville field satisfies $Z(\rho^R) = \rho^R$, the compact subset $X_\sigma = \{\rho \defeq\rho^R \leq \sigma \}$ is a Liouville domain. The boundary $\partial X_\sigma$ inherits a contact structure from the Liouville $1$-form $\theta$, and we choose a positive slope $\lambda$ such that
\begin{equation}\label{eq: slope lambda}
   \lambda \mathbb{Q}\cap \spec(\partial X_{\sigma}) = \emptyset.
\end{equation}
Here, the action spectrum $\spec(\partial X_{\sigma}) \subseteq \mathbb{R}_{>0}$ is the subset of all lengths of closed Reeb orbits. Such $\lambda > 0$ exists because $\spec(\partial X_{\sigma})$ is a measure zero subset of the reals, see \cite[Lemma 2.2]{Oh-measurezero} for instance. 
Next, let $h$ be a non-decreasing smooth function such that (see Figure \ref{fig: Base Hamiltonian})
\begin{equation}
    \begin{cases}
        h(\rho) = 0 &\text{if}\ \rho \leq 1 - 5\eta, \\
        h(\rho) = \lambda(\rho - (1-4\eta)) &\text{if}\ \rho \geq \sigma_-.
    \end{cases}
\end{equation}

\begin{figure}
    \centering
    \begin{tikzpicture}[x=0.75pt,y=0.75pt,yscale=-1,xscale=1]
        
        \draw  (143,331.8) -- (560,331.8)(184.7,87) -- (184.7,359) (553,326.8) -- (560,331.8) -- (553,336.8) (179.7,94) -- (184.7,87) -- (189.7,94)  ;
        \draw   (335,327) -- (335,337) ;
        \draw   (415,327) -- (415,337) ;
        \draw   (495,327) -- (495,337) ;
        \draw    (335,332) .. controls (363,329) and (388,315) .. (400,296) ;
        \draw    (400,296) -- (531,103) ;
        \draw   (455,327) -- (455,337) ;
        \draw   (535,327) -- (535,337) ;
        
        \draw   (375,327) -- (375,337) ;
        
        \draw (400,335) node [anchor=north west][inner sep=0.75pt]   [align=left] {$\sigma_-$};
        \draw (480,335) node [anchor=north west][inner sep=0.75pt]   [align=left] {$\sigma_+$};
        \draw (520,335) node [anchor=north west][inner sep=0.75pt]   [align=left] {$1$};
   
        \draw (566,325) node [anchor=north west][inner sep=0.75pt]   [align=left] {$\rho$};
        \draw (172,64) node [anchor=north west][inner sep=0.75pt]   [align=left] {$h(\rho)$};
        \draw (440,335) node [anchor=north west][inner sep=0.75pt]   [align=left] {$\sigma$};

    \end{tikzpicture}
    \caption{Linear Hamiltonian, with the points $1$, $\sigma_+ = 1-\eta$, $\sigma = 1-2\eta$, $\sigma_- = 1-3\eta$, $1-4\eta$, and $1-5\eta$ marked on the $\rho$-axis.}
    \label{fig: Base Hamiltonian}
\end{figure}

For the purposes of Floer theory, we need to perturb the Hamiltonian $h\circ \rho$ so that the $1$-periodic orbits are non-degenerate. An example of such perturbations are constructed in \cite[Lemma 4.17]{BSV}. We require that our perturbations are within the class $C^{\infty}(M,D)$ of Hamiltonians preserving $D$, i.e.
\begin{equation}
    C^{\infty}(M,D) = \{ H:M\rightarrow \mathbb{R}\ | \ \forall p\in D,\quad X_{H,p} \in T_pD\}.
\end{equation}
If $D = (D_i)_{i=1}^N$ is a simple crossings divisor, it should be understood that
\begin{equation}
    C^{\infty}(M,D) = \bigcap_{i=1}^N C^{\infty}(M,D_i).
\end{equation} 
\begin{lemma}\label{lemma: Hamiltonians}
    Let $(\epsilon_n)_n$ be a sequence of positive real numbers. There is a sequence $\bH = (H_n)_{n\geq 1}$ of non-degenerate Hamiltonians $H_{n,t} = nh(\rho) + K_{n,t} \in C^{\infty}(M,D)$ such that $K_{n,t}\leq 0$ and
    \begin{itemize}
        \item[(1)] The functions $K_{n,t}$ vanish on the neck $\{\sigma_- \leq \rho \leq \sigma_+\}$.
        \item[(2)] There is a sequence  $C_n:M\rightarrow \mathbb{R}$ such that $K_{n-1,t} \leq C_n \leq K_{n,t}$.
        \item[(3)] The orbits of the $H_{n,t}$ contained in $X_\sigma$ are pairwise disjoint.    
        \item[(4)] Every orbit $(\gamma,u)$ of $H_{n,t}$ has a nearby orbit $(\overline{\gamma},\overline{u})$ of $nh(\rho)$ s.t.
        \begin{itemize}
            \item[(i)] $\abs{A(\gamma,u)-A(\overline{\gamma},\overline{u})} < \epsilon_{n}$.
            \item[(ii)] $\abs{\CZ(\gamma,u)-\CZ(\overline{\gamma},\overline{u})} \leq \dim(\ker(d\Phi^1_{h\circ\rho}-\id))/2$.
        \end{itemize}
    \end{itemize}
\end{lemma}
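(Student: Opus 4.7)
My plan is to build $K_{n,t}$ as a sum of two localized perturbations supported in the regions $\{\rho \le \sigma_-\}$ and $\{\rho \ge \sigma_+\}$, so that it vanishes on the neck by construction and (1) is automatic. On the inner region $X_{\sigma_-}$, the Hamiltonian $nh(\rho)$ is identically zero and its $1$-periodic orbits are all constant; I take $K_{n,t}$ there to be a time-dependent non-positive perturbation of a Morse function, with $C^2$-norm bounded above by some $\delta_n$ to be chosen. Since $D$ lies in $\{\rho \ge 1\}$, the inner region is disjoint from $D$, and the constraint $H_{n,t} \in C^\infty(M,D)$ imposes nothing there.

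On the outer region $\{\rho \ge \sigma_+\}$, the orbits of $nh(\rho)$ come in Morse--Bott families classified by the preceding corollary: those contained in strata $D_I$, together with Morse--Bott tori in the conical region originating from closed Reeb orbits, which form a finite set by the slope condition \eqref{eq: slope lambda}. To perturb these while preserving every $D_i$, I will follow the scheme of \cite[Lemma 4.17]{BSV}: on each stratum $D_I$ I pull back, via the projection $\pi_I$, a small Morse function on $D_I$, and to break transverse degeneracies I add a perturbation that is polynomial in the commuting coordinates $r_i^{\max}$ and vanishes to first order on each $D_i$. Expressing the transverse piece in the $\mathbb{T}^{\abs{I}}$-invariant coordinates of the normal polydisk bundle makes the $D$-preserving condition automatic. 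After subtracting a small positive constant, the resulting $K_{n,t}$ is non-positive with $\|K_{n,t}\|_{C^2} \le \delta_n$.

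Properties (2)--(4) are then arranged by tuning $\delta_n$. For (2), if $\delta_n$ is strictly decreasing and the time-dependence of $K_{n,t}$ is small compared to the decrement $\delta_{n-1} - \delta_n$, then $\max_t K_{n-1,t}(p) \le \min_t K_{n,t}(p)$ pointwise, and the function $C_n(p) \defeq \max_t K_{n-1,t}(p)$ interpolates the two. For (3), generic choices of the Morse functions in the inner region and on each stratum $D_I$ place the critical points at pairwise distinct locations, so the orbits of $H_{n,t}$ inside $X_\sigma$ are pairwise disjoint. Property (4) reduces to a standard implicit-function-theorem argument: for $\delta_n \le \epsilon_n$ sufficiently small, every orbit of $H_{n,t}$ lies in a $C^1$-small neighbourhood of a unique Morse--Bott family of orbits of $nh(\rho)$, the actions differ by $O(\delta_n)$, and the Conley--Zehnder index change is bounded by half the dimension of the kernel of $d\Phi^1_{h\circ\rho} - \mathrm{id}$ on that family.

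The main obstacle is the outer-region construction: one must simultaneously make the perturbed flow non-degenerate, keep every stratum $D_I$ invariant, and control the action/index estimates (4) uniformly in $n$. The difficulty is combinatorial, since the $D_I$ are nested and each transverse direction must be treated so that the flow restricted to $D$ remains tangent to $D$. This is essentially the content of \cite[Lemma 4.17]{BSV}, so my proof will largely consist of adapting that construction and verifying that the additional pointwise monotonicity needed for (2) and the generic disjointness needed for (3) can be achieved by a single consistent family of perturbations indexed by $n$.
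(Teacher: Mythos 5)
Your overall strategy matches the paper's: both defer the construction of $(K_{n,t})$ satisfying (1)--(4) to \cite[Lemma 4.17]{BSV}, and both recognize that the genuinely new constraint is $H_{n,t} \in C^\infty(M,D)$, to be handled via torus-invariant perturbations pulled back along $\pi_I$. However, the gap in your proposal is precisely at the point where the argument departs from \cite{BSV}.

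You observe, correctly, that expressing the transverse perturbation in $\mathbb{T}^{\abs{I}}$-invariant coordinates makes the $D$-preserving condition automatic, but then you attribute the remaining combinatorial difficulty to \cite[Lemma 4.17]{BSV} itself. That lemma does \emph{not} require the perturbations to lie in $C^\infty(M,D)$, so it cannot do this work for you. The paper isolates the new content as a separate lemma (Lemma~\ref{lem:pert_along_D}), whose proof contains two ingredients your sketch omits. First, there is a downward induction on $k = \abs{I}$: perturbations chosen to make orbits on $D_I$ nondegenerate must be supported away from $\bigcup_{j \notin I} D_j$, so that they do not disturb nondegeneracy already achieved on deeper strata $D_J$, $J \supsetneq I$. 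Your per-stratum description does not address how the perturbations on different strata interact, and without the inductive ordering it is not clear a single consistent $\delta_t$ exists. Second, a perturbation pulled back from $D_I$ via $\pi_I$ and cut off in $(r_i)_{i\in I}$ can only fix the \emph{tangential} block of the return map; the normal block $D - \id$ is untouched. You propose adding a polynomial in $r_i^{\max}$, which is the right kind of term, but you give no argument that it actually achieves nondegeneracy of the normal block. The paper's proof needs a careful argument here: it takes a time-concentrated term $\epsilon\,\eta(t)\sum_{i\in I} r_i^2$ with $\eta$ a delta-approximation, shows the resulting normal return map is close to $R_\epsilon D_0$ for a rotation $R_\epsilon$, and then argues via the non-vanishing jet of $\det(R_\epsilon D_0 - \id)$ at $\epsilon=0$ that nondegeneracy is achieved for arbitrarily small $\epsilon$. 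Naming the perturbation class without this verification is a real gap, since the naive Hessian count does not work (the perturbation's contribution to the normal Hessian at $D_I$ does not by itself dominate).

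The rest of your outline --- the region decomposition enforcing (1), the choice of decreasing $\delta_n$ for (2), generic Morse functions for (3), and the implicit-function/Morse--Bott estimate for (4) --- is routine and consistent with the paper, which simply cites \cite[Lemma 4.17]{BSV} for all of it.
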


\begin{proof}
    The Hamiltonians $H_{n,t}$ are constructed as in \cite[Lemma 4.17]{BSV}; the only new part which is potentially not routine is that the Hamiltonians may be chosen to lie in $C^\infty(M,D)$. This can be done using Lemma \ref{lem:pert_along_D} in place of the usual argument that nondegeneracy can be achieved by an arbitrarily small perturbation in $C^\infty(M)$. 
\end{proof}

\begin{lemma}\label{lem:pert_along_D}
    Let $H_t \in C^\infty(M,D)$ be a family of Hamiltonians parametrized by $t \in S^1$, and let $U \subset M$ be an open set such that all orbits of $H_t$ which intersect $U$ are contained in it. 
    Then there exists $\delta_t \in C^\infty(M,D)$, supported in $U$, such that all orbits of $H_t+\delta_t$ contained in $U$ are nondegenerate. 
    Furthermore, $\delta_t$ may be chosen to be arbitrarily small in any $C^\ell$ norm.
\end{lemma}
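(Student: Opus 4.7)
The plan is to imitate the classical argument that nondegeneracy can be achieved by an arbitrarily $C^\ell$-small perturbation, paying careful attention to the constraint $\delta_t \in C^\infty(M,D)$. Because $X_{H_t}$ is tangent to $D$, the flow of $H_t$ preserves each stratum $D_I = \bigcap_{i \in I} D_i$, so every $1$-periodic orbit $\gamma$ of $H_t$ is contained in a unique minimal stratum $D_I$ (with $I = \emptyset$ corresponding to orbits in $X$). By the orthogonality hypothesis on $D$, the symplectic splitting
\begin{equation*}
    T_{\gamma(0)}M = T_{\gamma(0)} D_I \oplus \bigoplus_{i \in I} N_{\gamma(0)} D_i
\end{equation*}
is preserved by the linearized time-$1$ return map $\Phi_\gamma$, which is accordingly block-diagonal. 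Nondegeneracy of $\gamma$ is equivalent to each block avoiding eigenvalue $1$: the tangential block is determined by $H_t|_{D_I}$, while each normal block on $N_{\gamma(0)} D_i$ is determined by the quadratic normal Taylor coefficient of $H_t$ transverse to $D_i$ along $\gamma$ (the linear normal term vanishes automatically because $H_t \in C^\infty(M,D_i)$).

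I would construct $\delta_t$ stratum-by-stratum, inductively from deepest to shallowest. On a given stratum $D_I$, start with the tangential contribution: $H_t|_{D_I}$ is a smooth time-dependent Hamiltonian on the compact symplectic manifold $D_I$, and the classical perturbation result produces an arbitrarily $C^\ell$-small $\bar\delta_t \in C^\infty_c((U \cap D_I) \times S^1)$ such that the orbits of $(H_t + \bar\delta_t)|_{D_I}$ contained in $U \cap D_I$ are tangentially nondegenerate; extend $\bar\delta_t$ to $M$ as $\pi_I^* \bar\delta_t$ via the tubular projection $\pi_I$ from Section \ref{sec:scd}, cut off by a bump supported in $U$. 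Because $\pi_I^* \bar\delta_t$ is constant in the normal directions to $D_I$, its Hamiltonian vector field is automatically tangent to every $D_j$, so this piece lies in $C^\infty(M,D)$. For each $i \in I$, the normal block along $D_i$ is then adjusted by adding a term of the form $g_i(\pi_I(\cdot)) \cdot r_i$, where $g_i \in C^\infty(D_I)$ and $r_i$ is the radial function from Section \ref{sec:scd}; this is quadratic in the normal direction to $D_i$, lies in $C^\infty(M,D)$ by virtue of the Poisson-commutation properties $\{r_i,r_j\}=0$, and modifies the normal Hessian of $H_t$ along $\gamma$ by a freely-chosen symmetric form, so a generic choice of $g_i$ makes the normal block avoid eigenvalue $1$ without disturbing the tangential block.

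The induction is consistent because the tangential piece $\pi_I^* \bar\delta_t$ added at $D_I$ has vanishing normal Hessian against every deeper stratum $D_J \supsetneq D_I$, and the quadratic normal pieces along distinct $D_i$ decouple; so nondegeneracy achieved on deeper strata is preserved as we perturb on shallower ones. Each elementary perturbation can be taken $C^\ell$-small, whence the total $\delta_t$ can be as well. The main technical obstacle is to verify rigorously that the local pullback and normal-quadratic constructions really do lie in $C^\infty(M,D)$ globally in the presence of the full stratification --- this is where the compatibility properties of the $r_i$'s and $\pi_I$'s from Section \ref{sec:scd} (in particular the commuting $\mathbb{R}/\mathbb{Z}$-actions and Poisson commutativity) do all the work, ensuring that perturbations designed for one stratum or one normal direction do not spoil the tangency constraint on any other component of $D$.
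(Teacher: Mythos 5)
Your plan follows the same high-level strategy as the paper's proof: decreasing induction on the depth $|I|$ of the stratum containing the orbit, splitting the perturbation into a tangential piece (built on $D_I$, pulled back via $\pi_I$, and cut off) and a normal piece built from the radial functions $r_i$, and checking everything stays in $C^\infty(M,D)$. The block decomposition of the linearized return map with respect to $TD_I \oplus N D_I$ is likewise the one the paper uses, and you correctly observe that the lower-left block vanishes because the flow preserves $TD_I$.

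There is, however, a genuine gap in your normal step. You claim that adding $g_i(\pi_I(\cdot)) \cdot r_i$ modifies the normal Hessian of $H_t$ along $\gamma$ by a \emph{freely-chosen symmetric form}. This is not so: $r_i$ is quadratic and rotationally invariant on each fiber of $N D_i$, so the restriction of $\mathrm{Hess}(g_i r_i)$ to $N_{\gamma(0)}D_i$ is a scalar multiple of the identity. At the level of the linearized return map, all this perturbation can do is compose the $N_{D_i}$-block with a rotation through an angle you control. That rotations suffice is true, but it needs an argument: one must show that $\theta \mapsto \det(R_\theta D_0 - \id)$ is not identically zero, where $D_0$ is the unperturbed normal block and $R_\theta$ the rotation. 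The paper supplies exactly this step, observing that this determinant is a polynomial in $\cos\theta, \sin\theta$ with non-vanishing jet at $\theta = 0$ (unless the block was already nondegenerate), so that arbitrarily small rotations achieve nondegeneracy. Without it, the phrase ``a generic choice of $g_i$ makes the normal block avoid eigenvalue 1'' is unjustified, since you are applying a one-parameter family of rotations rather than a generic symmetric perturbation. Filling in this analytic point would bring your argument into full agreement with the paper's.
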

\begin{proof}
    We will assume that $U = M$ for simplicity; the proof is the same for general $U$.
    
    Because the Hamiltonian flow of $H_t$ preserves each divisor $D_i$ by the definition of $C^\infty(M,D)$, for each orbit $\gamma$ there exists $I_\gamma$ such that $\gamma$ is contained in $D_{I_\gamma}$, and does not intersect $D_j$ for $j \notin I_\gamma$. 
    Now let us assume that all orbits $\gamma$ of $H_t+\delta_t$ with $\abs{I_\gamma} \geq k+1$ are nondegenerate. 
    We claim that for each $I$ with $\abs{I}=k$, we may choose a perturbation of $\delta_t$ which is supported away from a neighbourhood of $D_j$ for all $j \notin I$, which makes the orbits $\gamma$ with $I_\gamma = I$ nondegenerate. 
    Note that this perturbation does not affect orbits on $D_J$ for $\abs{J} \ge k$ and $J \neq I$. 
    Therefore, by applying such a perturbation to each $I$ with $\abs{I} =k$, we may arrange that all orbits $\gamma$ with $\abs{I_\gamma} \ge k$ are nondegenerate. 
    The result then follows by (decreasing) induction on $k$.
    
    It remains to establish the claim. 
    The set of orbits $\gamma$ with $I_\gamma = I$ is closed, and contained in $D_I \setminus \cup_{j \notin I} D_j$; therefore it is contained in some open set $U_I$ which is disjoint from a neighbourhood of $\cup_{j \notin I} D_j$. 
    By the standard argument, we may choose a perturbation of $\delta_t|_{D_I}$, supported in $D_I \cap U_I$, so that all orbits contained in $D_I \cap U_I$ are nondegenerate when considered as an orbit inside $D_I$. 
    We then extend this perturbation to $M$ by pulling it back to the neighbourhood $UD_I$ of $D_I$ by the fibration $\pi_I$, whose fibres we recall are symplectically orthogonal to $D_I$; then cutting off with a bump function depending on the variables $(r_i)_{i \in I}$. 
    This ensures that the extension is contained in  $C^\infty(M,D)$, and by choosing the support of the bump function sufficiently small, we may ensure that the extension is supported in $U_I$. 
    The orbits $\gamma$ of this extension with $I_\gamma = I$ are precisely the orbits of the restriction to $U_I$; and these are nondegenerate as orbits inside $D_I$. 
    That is, if we write 
    $$ D \phi^1_{H_t+\delta_t} - \id= \left[ \begin{matrix}
                                    A - \id& B \\
                                    C & D - \id
                                \end{matrix} \right]\in \mathrm{End}\left( TD_I \oplus ND_I \right),       $$
    then $A-\id$ is nonsingular. 
    Note that the return map preserves $TD_I$, so $C=0$. 
    Thus, in order to verify that $D\phi^1_{H_t+\delta_t} - \id$ is nonsingular, it suffices to arrange that $D - \id$ is nonsingular. 
    We claim that this can be achieved by adding $\epsilon \cdot \eta(t)\cdot(\sum_{i \in I} r_i^2)$ to $\delta_t$, where $\eta(t)$ is an approximation to a delta function supported in $(1 - \epsilon',1)$. Let $D_\epsilon$ denote the resulting component of the return map. By choosing $\eta(t)$ sufficiently close to a delta function, we can make $D_\epsilon$ arbitrarily close to the composition of $R_\epsilon \cdot D_0$, where $R_\epsilon$ is the `rotation by $\epsilon$' map, in any $C^\ell$ norm. As the function $\det(R_\epsilon \cdot D_0 - \id)$ is a polynomial in $\cos(\epsilon)$ and $\sin(\epsilon)$ which is non-constant (unless $D_0 = 0$, in which case no perturbation is necessary to achieve nondegeneracy), it has non-vanishing jet at the origin $\epsilon=0$. 
    Therefore, by choosing $\eta$ to approximate a delta function sufficiently closely, we may arrange that $\det(D_\epsilon - \id)$ also has non-vanishing jet at the origin, and therefore is not identically $0$ in any neighbourhood of the origin. 
    Therefore we may achieve nondegeneracy for arbitrarily small values of $\epsilon$; in particular, by taking $\epsilon$ sufficiently small, we may achieve nondegeneracy using a perturbation of $\delta_t$ which is arbitrarily small in the $C^\infty$ topology.
    This completes the proof of the claim.
\end{proof}

Because of our choice of $\lambda$, the Hamiltonian $nh(\rho)$ does not have $1$-periodic orbits in the region $\{ \sigma_-\leq \rho \leq \sigma_+\}$. In particular, we have a dichotomy of $1$-periodic orbits of the Hamiltonians $H_n: M\rightarrow \mathbb{R}$:
\begin{itemize}
    \item[-] $SH$-orbits, which are the orbits contained in $X_{\sigma}$.
    \item[-] $D$-orbits, which are the orbits living outside of $X_{\sigma}$.
\end{itemize}

When the parameter $\eta$ (which appeared in the definition of $h(\rho)$) is chosen sufficiently small, the $D$-orbits tend to have a higher index relative to their action. The following is one of the key results of \cite{BSV}.

\begin{lemma}\cite[Lemma 5.5]{BSV}\label{lemma: D-orbits are P-negative}
    If $\eta$ is sufficiently small, there is a smoothing parameter $R\in (0,R_0)$ which is close to $0$, and a constant $C \defeq C(\lambda,\sigma,R)>2$ such that all capped $D$-orbits $x = (\gamma,u)$ satisfy the inequality
    \begin{equation}
        C\cdot n_x + \kappa^{-1} A(x) - \abs{x} < 0,
    \end{equation}
    where $\kappa$ is the monotonicity constant from \eqref{eq:monotonicity}.
\end{lemma}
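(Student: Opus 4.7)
Since this lemma is stated verbatim as \cite[Lemma 5.5]{BSV}, my plan is ultimately to defer to that reference; nonetheless, let me outline the strategy I would follow.

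The key first observation is that the monotonicity assumption $[\omega] = 2\kappa c_1(M)$ makes the quantity $\kappa^{-1}A(x) - \abs{x}$ independent of the choice of capping. Indeed, \eqref{eq:ind_act_change} shows that recapping by a class $A\in \pi_2(M)$ changes it by $\kappa^{-1}\omega(A) - 2c_1(A) = 0$. Since $n_x$ (the integer $n$ such that $x$ is an orbit of $H_n$) is manifestly cap-independent, it suffices to verify the inequality using the outer cap $u_{\mathrm{out}}$, which is the canonical representative coming from the Liouville flow.

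Second, I would substitute the explicit formulas for $A(\gamma,u_{\mathrm{out}})$ and $\CZ(\gamma,u_{\mathrm{out}})$ from the previous lemma (applied to $H = nh(\rho)$), absorbing the perturbation errors allowed by Lemma \ref{lemma: Hamiltonians} into a small additive constant. For a $D$-orbit $\gamma$ one has $\rho(\gamma) > \sigma$, and the choice of slope with $\lambda\mathbb{Q}\cap \spec(\partial X_\sigma) = \emptyset$ excludes orbits from the neck, so in fact $\rho(\gamma) > \sigma_+$ and $\gamma$ lies in the linear region where $h'(\rho) = \lambda$; hence $\nu_i^h(\gamma) = \lambda\nu_i(\gamma)$ and one obtains an estimate of the shape
\begin{equation*}
\kappa^{-1}A(x) - \abs{x} \;\le\; \kappa^{-1} n \lambda \bigl(\rho(\gamma) + \textstyle\sum_i \nu_i(\gamma)\,r_i^{\max}(\gamma) \bigr) - 2\sum_i \lceil n\lambda\nu_i(\gamma)\rceil + O(1).
\end{equation*}
Third, I would exploit the local geometry of the smoothed radial coordinate: near a stratum $D_I$ the functions $\nu_i$ behave like $1/\kappa_i$ as $R\to 0$, while $r_i^{\max}(\gamma) < \kappa_i$. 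Each ceiling term therefore contributes to $\abs{x}$ at rate $\approx 2n\lambda/\kappa_i$, while the corresponding action contribution grows only at rate $\approx n\lambda$. Provided $R$ and $\eta$ are taken small enough, this produces a definite negative gap per unit of $n_x$, which can be made to absorb any prescribed $C>2$ and still leave a negative total.

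The main obstacle is the delicate book-keeping required to extract a strictly-linear-in-$n$ inequality uniformly over all $D$-orbits, once perturbation errors, the smoothing corrections to $\rho$, and the $O(1)$ contribution from $\tfrac12\mathrm{sign}(\mathrm{Hess}_\gamma)$ are accounted for. Since this numerical analysis is carried out in detail over several pages of \cite[\S 5]{BSV}, my plan is to simply cite that lemma rather than reproduce the calculation.
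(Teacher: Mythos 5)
Your proposal is correct and takes essentially the same approach as the paper, which simply cites \cite[Lemma 5.5]{BSV} without reproducing the argument. Your preliminary observation that $\kappa^{-1}A(x)-\abs{x}$ is cap-independent by monotonicity, and the outline of the index-versus-action comparison using the outer cap, accurately reflect the structure of the cited proof.
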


From the proof of \cite[Lemma 5.5]{BSV}, the constants $C(\lambda,\sigma,R)$ are multiplicative in $\lambda$ so that
\begin{equation}
    C(m\lambda,\sigma,R) = mC(\lambda,\sigma,R).
\end{equation}
Therefore, by choosing a sufficiently large slope $\lambda$ (see \eqref{eq: slope lambda}), we can arrange for
\begin{equation}\label{eq: C>2, version 1}
    C + \kappa^{-1}\min_y C_n(y) > 2 \quad \text{for all} \ n\in \mathbb{N}.
\end{equation}

\section{Pseudo-holomorphic curve theory}

In this section we introduce the moduli spaces of pseudoholomorphic curves which we use to construct the Maurer--Cartan element.

\subsection{Rational curves with aligned framings}

Let $\domms_{0,1+d}$ be the Deligne--Mumford space of smooth genus $0$ curves with $1+d$ marked points, and $\overline{\domms}_{0,1+d}$ its compactification. A 
\emph{framing} on a curve $(C,z_0,\dots,z_d) \in \domms_{0,1+d}$ is the choice of a tangent direction $\theta_i\in \mathbb{R}\mathbb{P}(T_{z_i}C)$ for each one of its marked points. The framing is said to be \emph{aligned} if for each $i = 1,\dots, d$, there is an isomorphism $\phi: C\rightarrow \mathbb{P}^1$ such that $\phi(z_0) = \infty$, $\phi(z_i) = 0$, and both $\phi_*(\theta_0)$ and $\phi_*(\theta_i)$ point along the positive real direction. The moduli space of smooth genus $0$ curves with $1+d$ marked points and aligned framings is
\begin{equation}
    \domms^{\al}_d = \text{Conf}_d (\mathbb{C})/\text{Aff}(\mathbb{C},\mathbb{R}_{>0}),
\end{equation}
where $\text{Aff}(\mathbb{C},\mathbb{R}_{>0}) = \{ z\mapsto az + b \ | \ a\in \mathbb{R}_{>0} \ \text{and} \ b\in \mathbb{C} \}$.

The moduli space $\domms^{\al}_d$ admits a Deligne--Mumford type compactification $\overline{\domms}^{\al}_d$ which is a smooth manifold with corners. It is an $S^1$-bundle over the real blowup of the Deligne--Mumford space $\overline{\domms}_{0,1+d}$ over the normal crossings divisor $\overline{\domms}_{0,1+d}\backslash \domms_{0,1+d}$, see \cite[\S 2]{ES1}.

Given a map $\bp : F \rightarrow \{1,\dots,d\}$, we can also construct a moduli space $\domms^{\al}_{d,\bp}$ of smooth genus $0$ curves with aligned framings and $\bp$-flavors. The $\bp$-flavor datum is a map $\psi: F\rightarrow \text{Isom}(C,\mathbb{P}^1)$ such that for each $f\in F$,
\begin{equation}
   \psi(f)(z_0) = \infty,\quad \psi(f)(z_{\bp(f)}) = 0,
\end{equation}
and $\psi(f)_*(\theta_0)$ points along the positive real direction.  The parametrization $\psi(f)$ is called a sprinkle for the marked point $z_{\bp(f)}$. 

A set of weights for a curve $C \in \domms^{\al}_{d,\bp}$ is a tuple of $1+d$ \emph{non-negative} integers $\bn = (n_0,n_1,\dots,n_d)$ such that
\begin{equation}
    n_0 = n_1+\dots+n_d + \abs{F}.
\end{equation}
The moduli space of $\bp$-flavored smooth genus $0$ curves with aligned framings in weight $\bn$ is simply $\domms^{\al}_{d,\bp,\bn} \defeq \domms^{\al}_{d,\bp}$. It admits a compactification $\overline{\domms}^{\al}_{d,\bp,\bn}$ modelled on semi-stable $d$-leafed trees described in \cite{ES1}, and this compactification is a smooth manifold with corners.

We will call a marked point \emph{forgettable} if it has weight $0$ and carries no sprinkle; these points play a special role in our constructions, analogous to \cite[Section 7]{perutz2022constructing} to which we refer for a more detailed account of an analogous setup. 
By forgetting all forgettable marked points, the triple $(d,\bp,\bn)$ reduces to a triple $(d^{\red},\bp^{\red},\bn^{\red})$. 
Moreover, we have a forgetful map
\begin{equation} \label{forgetful map}
    \overline{\domms}^{\al}_{d,\bp,\bn} \xrightarrow[]{\text{forgetful}}\overline{\domms}^{\al}_{d^{\red},\bp^{\red},\bn^{\red}}.
\end{equation}
This is well-defined unless $d^\red = 1$, $F= \emptyset$, $n^\red_1 > 0$; i.e., when forgetting the forgettable marked points gives a Floer cylinder. 
We must treat this special case separately throughout our constructions.

\begin{example}\label{framed cap}
    \emph{The framed cap} is the smooth genus $0$ curve $(C,z_0,z_1;\theta_0,\psi_1)$ where $z_0$ has weight $1$ and $z_1$ has weight $0$ and $z_1$ carries a sprinkle.
\end{example}

Recall that a positive/negative cylindrical end for a curve $C_r$ represented by $r \in \domms^{\al}_{d,\bp,\bn}$ at a point $z\in C_r$ is a holomorphic embedding
\begin{equation}
    \epsilon_{\pm}: Z^{\cl}_{\pm}\rightarrow C\backslash\{z\}
\end{equation}
of a positive/negative semi-infinite cylinder $Z^{\cl}_{\pm} = \{ (s,t)\in \mathbb{R}\times S^1 \ | \ \pm s \geq 0 \}$
such that $\lim_{\pm s \rightarrow \infty} \epsilon_{\pm}(s,t) = z$. A set of cylindrical ends for $r$ is the choice of a negative cylindrical end for the output $z_0$ if $n_0>0$, and positive cylindrical ends for positively weighted inputs $\{z_i\ | \ \text{if}\ i\geq 1 \text{ and } n_i\geq 1 \}$. We denote by $\Sigma_r$ the open Riemann surface obtained by removing all marked points with positive weights from $C_r$,
\begin{equation}
    \Sigma_r = C_r\backslash \{z_i \ |\ i\geq 1 \text{ and } n_i>0 \}.
\end{equation}

\subsection{Coherent perturbation data}

A perturbation datum for $r \in \domms^{\al}_{d,\bp,\bn}$ is a tuple $\mathscr{F}_r = (\mathcal{K}_r,J_r)$ consisting of a Hamiltonian perturbation and an almost complex structure, both of which are domain-dependent on the associated Riemann surface $\Sigma_r$. Suppose now that we have a nodal curve $r = (r_+,z_+)\#(r_-,z_-) \in \partial \overline{\domms}^{\al}_{d,\bp,\bn} $ with two components $r_{\pm} \in \domms^{\al}_{d_\pm,\bp_\pm,\bn_\pm}$ glued at the points $z_\pm$. Assume further that we have a perturbation datum $\mathscr{F}_{r_\pm}$ for each component $r_\pm$. We can construct a perturbation datum $\mathscr{F}_r$ for $r$ as follows:
\begin{itemize}
    \item[-] If $z_+$ has positive weight, then $\mathscr{F}_{r}$ agrees with $\mathscr{F}_{r_\pm}$ on the component $r_\pm$.
    \item[-] If $z_+$ has weight $0$, then $\mathscr{F}_r$ agrees with $\mathscr{F}_{r_+}$ on the component $r_+$, but it equals $(0,J_{r_-}\vert_{z_-})$ on the component $r_-$.
\end{itemize}
This construction easily generalizes to nodal curves $r\in \partial \overline{\domms}^{\al}_{d,\bp,\bn}$ which have multiple components (compare the \textbf{(Constant on spheres)} condition on perturbation data in \cite[Definition 5.4]{perutz2022constructing}). 

In this section, we construct perturbation data varying smoothly in $r$ satisfying the following conditions:
\begin{itemize}
    \item[] \textbf{(Consistency)} If $r_1,\dots,r_c$ are the irreducible components of $r$, then $\mathscr{F}_{r}$ is the perturbation datum induced by $(\mathscr{F}_{r_i})_{i=1}^c$ as described above.
    \item[] \textbf{(Forgetfulness)} The perturbation data $\mathscr{F}_r$ are constant along the fibers of the forgetful map \eqref{forgetful map}, whenever the latter is well-defined; in the case where forgetting all forgettable marked points gives a Floer cylinder, the perturbation data coincide with the Floer data, and in particular are independent of the position of the forgettable marked points.
    \item[] \textbf{(Equivariance)} The perturbation data $\mathscr{F}_r$ are $\Sym(d,\bp)$-invariant. 
\end{itemize}
We emphasize that the requirement of \textbf{(Consistency)} is slightly different from the corresponding requirement described in \cite[\S 9]{PL-theory} (necessarily so, as the latter would be incompatible with \textbf{(Forgetfulness)}). Indeed, we do not always require compatibility of perturbation data with gluing operations along cylindrical ends near the boundary $\partial\overline{\domms}^{\al}_{d,\bp,\bn}$. In fact, we do not even make use of cylindrical ends at marked points that have weight zero.
  
We do not specify perturbation data on the moduli spaces where all the weights $n_i$ are $0$; these moduli spaces do not appear in our construction (we deal with sphere bubbles in a different way: as we have already seen, when a sphere bubble develops along an edge with weight zero, it is equipped with an almost complex structure which is constant over the sphere, but this constant value may depend on where the sphere bubble was attached).

\subsection{Hamiltonian perturbations}

Recall that a subclosed $1$-form for $r\in \domms^{\al}_{d,\bp,\bn}$ is an element $\gamma_r \in \Omega^1(\Sigma_r)$ such that
\begin{equation}
    \begin{cases}
        & d\gamma_r \leq 0,\\
        & \epsilon_i^*\gamma_r = n_idt \quad\text{for}\ i\in Z^c(\bn)\text{ and } \pm s \rightarrow \infty,
    \end{cases}    
\end{equation}
where $Z^c(\bn) = \{i \in \{0,1,\dots,d\}\ |\ n_i \neq 0 \}$. Using the same approach of \cite{ES1},  we construct a coherent universal choice of cylindrical ends and subclosed $1$-forms on the moduli spaces $\domms^{\al}_{d,\bp,\bn}$ which are $\Sym(d,\bp)$-invariant. 
We require that the cylindrical ends and subclosed $1$-forms be compatible with forgetful maps \eqref{forgetful map} when they are well-defined, in the sense that the choices on $\domms^\al_{d,\bp,\bn}$ are obtained from those on $\domms^\al_{d^\red,\bp^\red,\bn^\red}$ by pulling back via the forgetful map. 
In the special case that forgetting all forgettable marked points gives a Floer cylinder, we require that the cylindrical ends extend to a parametrization of the whole curve by the cylinder $Z^\cl = \R \times S^1$ (cf. \cite[Definition 7.10]{perutz2022constructing}), and the $1$-forms are equal to $n_i dt$. Finally, we shrink the cylindrical ends so that they are disjoint from all marked points; 
Such $1$-forms and cylindrical ends may be constructed by the usual argument, together with the argument of \cite[Lemma 5.12]{perutz2022constructing} to arrange compatibility of the cylindrical ends with the forgetful maps. 

For $r\in \domms^{\al}_{d,\bp,\bn}$, an admissible Hamiltonian perturbation is a Hamiltonian valued $1$-form $\mathcal{K}_r \in \Omega^1(\Sigma_r,C^{\infty}(M,D))$ of the type
\begin{equation}\label{eq:KrFr}
    \mathcal{K}_r = (h(\rho) + F_r)\otimes \gamma_r,
\end{equation}
where $F_r$ is a domain-dependent Hamiltonian such that
\begin{equation}\label{eq:Kr_cyl}
    \begin{cases}
        F_r = 0 & \quad\text{on the neck region } \sigma_-\leq\rho\leq \sigma_+,\\
        \epsilon_i^*\mathcal{K}_r = H_{n_i}dt & \quad\text{for all } i \in Z^c(\bn) \text{ and } \pm s \gg 0.
    \end{cases}
\end{equation}

This type of Hamiltonian perturbation has an associated curvature term,
\begin{equation}
    R(\mathcal{K}_r) \defeq d_{\Sigma_r}(\mathcal{K}_{r}).
\end{equation}

In \cite[Corollary 4.3]{ES1}, we explain how to construct a consistent universal choice of perturbations $\mathcal{K}_r$ across all the moduli spaces $\domms^{\al}_{d,\bp,\bn}$ such that
\begin{equation}\label{curvature estimate}
    R(\mathcal{K}_r) \leq C_{\bn} d\gamma_r.
\end{equation}
In the curvature estimate above, $C_{\bn}$ is a constant given by
\begin{equation}
    C_{\bn} = 
    \begin{cases}
        0 \quad & \text{ if } \abs{Z^c(\bn)} = 2\\
        \min_i \widetilde{C}_{n_i}\quad & \text{ otherwise, }
    \end{cases}
\end{equation}
where
\begin{equation}
    \widetilde{C}_n =
    \begin{cases}
        C_0 \quad & \text{ if } n = 0\\
        \min_y C_n(y)/n \quad & \text{ if } n\geq 1.
    \end{cases}
\end{equation}
The sequence of autonomous Hamiltonians $(C_n)_{n \geq 1}$ is the same sequence from Lemma \ref{lemma: Hamiltonians}, and $C_0$ is a negative constant such that $C_0\leq C_1$. We choose $C_0$ sufficiently close to $C_1$ so that (compare with \eqref{eq: C>2, version 1})
\begin{equation}\label{eq: C>2}
    C + \kappa^{-1} C_{\bn} > 2 \quad \text{for all}\ \bn. 
\end{equation}

\subsection{Almost complex structures}\label{subsec: Almost complex structures}

Let $J$ be an almost complex structure on $M$ such that
\begin{itemize}
    \item[(1)] $J$ is $\omega$-tame and respects the snc divisor $D$, i.e. for all $i=1,\dots,N$ and $p\in D_i$, we have that $JT_pD_i = T_pD_i$
    \item[(2)] $J$ is of contact type on the neck region so that 
    \begin{equation}
        d\rho = \theta\circ J \quad \text{ when } \sigma_-\leq \rho \leq \sigma_+.
    \end{equation} 
\end{itemize}
Let $\mathscr{J}'$ be the contractible space of $\omega$-tame perturbations of $J$ defined in \cite[Section 3.3]{perutz2022constructing}, and $\mathscr{J} \subset \mathscr{J}'$ the subspace of perturbations which are equal to $J$ on the neck region $\{\sigma_- \leq \rho \leq \sigma_+\}$.  

\begin{remark}
Here are the key properties of $\mathscr{J}$:
\begin{enumerate}
    \item Any $J \in \mathscr{J}$ satisfies (1) (which allows us to use positivity of intersection) and (2) (which allows us to use the maximum principle).
    \item The space $\mathscr{J}$ is contractible, allowing for the inductive construction of perturbation data.
    \item The space $\mathscr{J}$ is sufficiently large to achieve all the transversality we need; in particular, no non-constant holomorphic curves $u$ can be contained entirely inside the neck region where $J$ is fixed, by exactness.
\end{enumerate}    
\end{remark}

For each natural number $n\in \mathbb{N}$, we choose a time-dependent $J_n \in \mathscr{J}$ such that the Floer complex $CF^*(X_{\sigma},H_n|_{X_{\sigma}},J_n)$ is well-defined, and furthermore such that there do not exist any Chern-number-1 $J_{n,t}$-holomorphic spheres passing through $\gamma(t)$, for any $SH$-orbit $\gamma$ of $H_n$ (this is true for generic $J_n$, cf. \cite[Theorem 3.1 (ii)]{HoferSalamon}). Due to the divisor restrictions on $J_n$, we do not claim that Floer's differential is well-defined on $CF^*(M,H_n,J_n)$.

Suppose now that $n_0>0$ and let $r\in \domms^{\al}_{d,\bp,\bn}$ be a moduli parameter. An admissible almost complex structure for $r$ is a domain-dependent $J_r \in \mathscr{J}$ such that
\begin{equation}\label{eq:Jr_cyl}
    \epsilon_{i}^* J_r \longrightarrow J_{n_i} \quad\text{ for all } i\in Z^c(\bn) \text{ and } \pm s \to \infty,
\end{equation}
asymptotically faster than any $\exp(-C\abs{S})$. We use the perturbation datum $(\mathcal{K}_r,J_r)$ to introduce the space $\mathcal{M}_r$ of solutions $u:\Sigma_r \rightarrow M$ of the pseudo-holomorhic equation
\begin{equation}\label{pseudo-holomorphic equation}
    \begin{cases}
        (du-Y_r)^{0,1}_{J_r} = 0,&\\
        E(u) < +\infty,&\\
        u(\epsilon_0(s,t)) \subseteq X_{\sigma}& \quad\text{for } s\ll 0. 
    \end{cases}
\end{equation}
Here, $Y_r = X_{h(\rho) + F_r}\otimes \gamma_r$ is the $\omega$-dual to $-d_M\mathcal{K}_r$. It is a $1$-form on $\Sigma_r$ with values in the space of Hamiltonian vector fields. Moreover, the energy of $u$ is
\begin{equation}
    E(u) \defeq \frac{1}{2}\int_{\Sigma_r} \norm{du-Y_r}^2_{J_r}.
\end{equation}

\begin{lemma}\label{integrated maximum principle}
    If a solution $u$ to \eqref{pseudo-holomorphic equation} is disjoint from $D$, then $u(\Sigma_r)\subseteq X_{\sigma}$.
\end{lemma}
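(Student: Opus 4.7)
The plan is to prove this as an integrated maximum principle in the style of Abouzaid--Seidel. The hypothesis that $u$ is disjoint from $D$ plays the essential role: it ensures that the Liouville primitive $\theta$ of $\omega$ on $X = M \setminus D$ can be pulled back along $u$, enabling the use of Stokes' theorem.

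I would argue by contradiction. Suppose $u(\Sigma_r) \not\subseteq X_\sigma$, so $\sup(\rho \circ u) > \sigma$. By Sard's theorem, pick a regular value $\sigma' \in (\sigma, \sigma_+)$ of $\rho \circ u$ and set $\Sigma^* := u^{-1}(\{\rho \geq \sigma'\})$, which is non-empty with smooth boundary $\partial \Sigma^* = u^{-1}(\{\rho = \sigma'\})$; the output end is disjoint from $\Sigma^*$ by the boundary condition in \eqref{pseudo-holomorphic equation}. Using the standard energy identity for Floer maps with perturbation $\mathcal{K}_r = H \otimes \gamma_r$, combined with $\omega = d\theta$ on $M \setminus D$, Stokes, and integration by parts, one obtains
$$E(u|_{\Sigma^*}) = \int_{\partial \Sigma^*} \bigl(u^*\theta - u^*H \cdot \gamma_r\bigr) + \int_{\Sigma^*} u^*H \cdot d\gamma_r + (\text{asymptotic contributions at input ends in } \Sigma^*).$$
On the neck region containing $\partial\Sigma^*$, $F_r$ vanishes and $J_r = J$ is of contact type with $d\rho = \theta \circ J$. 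Combining this with the Floer equation along $\partial \Sigma^*$ yields the pointwise Abouzaid--Seidel inequality $u^*\theta(\tau) \leq h'(\sigma')\sigma' \gamma_r(\tau)$ for the positively-oriented tangent $\tau$. Since $h'(\sigma')\sigma' - h(\sigma') = \lambda(1-4\eta) > 0$ and the subclosed form condition $d\gamma_r \leq 0$ gives $\int_{\partial\Sigma^*} \gamma_r \leq 0$ in the compact case, the boundary integral is non-positive; the bulk term is also non-positive because $H = h(\rho) \geq 0$ on $\{\rho \geq \sigma'\}$ and $d\gamma_r \leq 0$. Assuming the asymptotic contributions are non-positive as well, this forces $E(u|_{\Sigma^*}) \leq 0$; since energy is non-negative, $du = Y_r$ on $\Sigma^*$, and because $X_\rho$ preserves $\rho$, $\rho \circ u$ is locally constant on $\Sigma^*$, equal to $\sigma'$ on the boundary, contradicting $\sup(\rho \circ u) > \sigma'$.

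The main obstacle I expect is handling the positive cylindrical ends $\epsilon_i$ whose asymptotic orbits are $D$-orbits lying in $\{\rho > \sigma_+\}$: such ends contribute semi-infinite components to $\Sigma^*$ and corresponding boundary-at-infinity terms in the integral above. I would truncate $\Sigma^*$ at large cylindrical parameter $|s|$ and use exponential convergence $u \circ \epsilon_i \to \gamma_i$ to identify each asymptotic contribution as a Hamiltonian-action-like term over $\gamma_i$; the $P$-negativity estimate of Lemma \ref{lemma: D-orbits are P-negative}, together with the curvature bound \eqref{curvature estimate} and the requirement \eqref{eq: C>2}, should then imply these terms have the desired non-positive sign, closing the argument.
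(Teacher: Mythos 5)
Your overall approach — argue by contradiction, set $\Sigma^* = u^{-1}(\{\rho \ge \sigma'\})$ for a regular value $\sigma' \in (\sigma,\sigma_+)$, use disjointness from $D$ to get a global primitive $\theta$ of $\omega$ on $u(\Sigma_r)$, then apply Stokes together with the pointwise contact-type (Abouzaid--Seidel) inequality at $\{\rho=\sigma'\}$ where $F_r$ vanishes and $J_r = J$ — is indeed the argument the paper is pointing to when it says the proof is ``identical to \cite{ES1}''. That part is sound, including the observation that the hypothesis of disjointness from $D$ is precisely what allows Stokes over the whole region, and the verification $h'(\sigma')\sigma' - h(\sigma') = \lambda(1-4\eta)>0$.

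The place where your proposal goes wrong is the final paragraph handling inputs asymptotic to $D$-orbits. You suggest controlling the resulting asymptotic boundary-at-infinity terms via the $\mathscr{P}$-negativity estimate of Lemma~\ref{lemma: D-orbits are P-negative} together with \eqref{curvature estimate} and \eqref{eq: C>2}. This cannot work: $\mathscr{P}$-negativity is a mixed \emph{index--action} inequality, $Cn_x + \kappa^{-1}A(x) - \abs{x} < 0$, and the Conley--Zehnder index $\abs{x}$ is a topological integer that simply does not appear anywhere in the energy identity you are computing. The asymptotic contributions from a semi-infinite end of $\Sigma^*$ converging to a $D$-orbit $\gamma_i$ are real analytic quantities (integrals $\int \gamma_i^*\theta$ and action-like terms), and no amount of index bookkeeping can trade places with them. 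Moreover, on $\{\rho > \sigma_+\}$ the perturbation $F_r$ need not vanish and $J_r$ need not be of contact type, so you lose the pointwise control you relied on near $\{\rho = \sigma'\}$.

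The correct resolution is simpler and does not require this extra machinery. Wherever the lemma is actually invoked in the paper (most explicitly in Lemma~\ref{lem:SC agree}, and implicitly in the definition of the operations $\tilde\ell^d_0$ on $\mathfrak g_{>0}$), the positively weighted inputs are $SH$-orbits drawn from $\mathscr{P}_{>0}SC^*(X_\sigma)$, hence already contained in $X_\sigma \subset \{\rho \le \sigma\}$, while the weight-zero formal generators $x^{D_j}$ correspond to interior marked points with no cylindrical ends at all. Thus every end of $\Sigma_r$ is asymptotic to an orbit at $\rho\le\sigma<\sigma'$, $\Sigma^*$ is a compact surface with boundary $\partial\Sigma^* \subset \{\rho=\sigma'\}$ and no asymptotic pieces, and the compact Stokes argument you sketched in the first part (using $\int_{\partial\Sigma^*}\gamma_r = \int_{\Sigma^*}d\gamma_r \le 0$) closes immediately — exactly as in \cite{ES1}. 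Your worry about $D$-orbit inputs is not unreasonable given the bare statement of the lemma, but the right response is to note that it is only applied with $SH$-orbit inputs, not to try to patch the argument with an index estimate. One further small imprecision: in the bulk term you should use the curvature bound \eqref{curvature estimate} on $R(\mathcal K_r) = d_{\Sigma_r}\mathcal K_r$ rather than writing $\mathcal K_r = H\otimes\gamma_r$ with $H=h(\rho)$, since $F_r$ may be nonzero away from the neck.
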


\begin{proof}
    This is a version of the integrated maximum principle of Abouzaid-Seidel, the proof is identical to \cite{ES1}.
\end{proof}

Suppose that we have a collection of $1$-periodic orbits $\bx = (x_i)_{i\in Z^c(\bn)}$ such that $x_0$ is not a $D$-orbit, and let $\mathbf{m} = (m_i)_{i\in Z(\bn)}$ be a tuple of `tangency vectors' $m_i \in (\Z_{\ge 0})^N$; here $Z(\bn) = \{i \in \{0,1,\ldots,d\}\ |\ n_i = 0\}$. 
Define the moduli space $\mapsms^{\al}_{d,\bp,\bn,\mathbf{m}}$ of pairs $(r,u)$ such that $u$ is a solution to the pseudo-holomorphic curve equation \eqref{pseudo-holomorphic equation} and
\begin{equation}
    \begin{cases}
        \lim_{s\to \pm \infty} \epsilon_i^*u(s,t) = x_i(t)&\quad \text{for}\ i\in Z^c(\bn)\\
        j_{m_{ik}-1,z_i}^{D_k}(u) = 0 &\quad \text{for}\ i\in Z(\bn).
    \end{cases}
\end{equation}
The second equation requires that $u$ is tangent to $D_k$ at $z=z_i$ to order $m_{ik}$, \cite[\S 6]{Cieliebak-Mohnke}. 

The virtual dimension of the moduli space (i.e., the index of the associated Fredholm operator) may be computed to be
\begin{equation}\label{eq: dimension formula 1}
        \text{vdim}_u \mapsms^{\al}_{d,\bp,\bn,\mathbf{m}} = \dim \domms^{\al}_{d,\bp,\bn} + \abs{x_0} - \sum_{i,n_i>0} \abs{x_i} - 2\sum_{i\in Z(\bn)} \abs{m_i},
    \end{equation}  
where we write $\abs{m_i} = \sum_{k=1}^N m_{ik}$ for any tangency vector $m_i = (m_{ik}) \in (\Z_{\ge 0})^N$, and the $1$-periodic orbits $(x_i)_{i\in Z^c(\bn)}$ are given caps $(u_i)_{i\in Z^c(\bn)}$ such that
\begin{equation}
    u_0 = -u\#(u_i\ | \ i\in Z^c(\bn)).
\end{equation}

The following is the main regularity result we will need:

\begin{lemma}\label{lemma: transversality}
    There exists a choice of perturbation data $\mathscr{F}_r$ satisfying \textbf{(Consistency)}, \textbf{(Forgetfulness)}, and \textbf{(Equivariance)}, such that the moduli spaces $\mapsms^{\al}_{d,\bp,\bn,\mathbf{m}}(\bx)$ are all Fredholm regular, with the sole exception of the case $\abs{Z^c(\bn)}=1$, $x_1=x_0$, $F=\emptyset$, $m_i = 0$ for all $i$, which consists entirely of trivial solutions. 
    
    Moreover, we may choose the perturbation data such that if $(r,u)$ is a non-trivial solution belonging to a $0$- or $1$-dimensional component of $\mapsms^{\al}_{d,\bp,\bn,\mathbf{m}}(\bx)$, and $i \in Z(\bn)$ is a weight-zero marked point with $u(z_i) \notin D$, then $u(z_i)$ does not lie on any $J_{z_i}$-holomorphic sphere of Chern number $1$. 
\end{lemma}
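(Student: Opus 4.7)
The plan is to proceed by induction on the complexity of the moduli spaces $\overline{\domms}^{\al}_{d,\bp,\bn}$, ordered by (for instance) the dimension of the underlying Deligne--Mumford space together with $|F| + \sum_i n_i$, with the Floer cylinder equipped with data $(H_n,J_n)$ serving as the base case. The three coherence conditions are handled by a by-now-standard pattern: boundary strata of $\overline{\domms}^{\al}_{d,\bp,\bn}$ are fixed by \textbf{(Consistency)} from the already-constructed lower-complexity perturbation data; weight-zero marked points have their perturbation data pulled back from the reduced moduli space $\overline{\domms}^{\al}_{d^\red,\bp^\red,\bn^\red}$ via the forgetful map, making \textbf{(Forgetfulness)} tautological; and $\Sym(d,\bp)$-\textbf{(Equivariance)} is achieved by averaging, which is legitimate because the group acts by permuting labels of marked points rather than moving points in $\Sigma_r$, so perturbations supported away from marked points remain symmetric without modification.

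The heart of the argument is the Fredholm regularity step. Let $\mathscr{P}$ denote the space of admissible perturbation data satisfying the three coherence conditions and the prescribed boundary behaviour. I would form the universal moduli space
\[
\mathcal{M}^{\mathrm{univ}}_{d,\bp,\bn,\mathbf{m}}(\bx) = \bigcup_{\mathscr{F}\in\mathscr{P}} \mapsms^{\al}_{d,\bp,\bn,\mathbf{m}}(\bx;\mathscr{F})
\]
and show it is a Banach manifold away from the trivial solutions excluded in the statement. For any non-trivial $(r,u)$, this reduces to locating a point $z \in \Sigma_r$ at which $u$ is somewhere injective and which lies outside the cylindrical ends, outside the fibres of the forgetful map, and away from the boundary strata; standard unique continuation (Aronszajn--Floer--Hofer--Salamon) combined with the exclusion of the trivial configuration produces such points, because the non-triviality assumption is precisely engineered to prevent $u$ from being forced (by the Forgetfulness constraint) to factor through a Floer cylinder. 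The linearization of the perturbed $\bar\partial$-operator is then surjective after a compactly supported variation of $J_r$ or $F_r$ near $u(z)$, and Sard--Smale yields a comeager set of regular $\mathscr{F}\in\mathscr{P}$. The tangency conditions to $D$ at weight-zero marked points are handled by the Cieliebak--Mohnke machinery of \cite{Cieliebak-Mohnke}: perturbations in $C^\infty(M,D)$ that respect the divisor still provide transversality for the tangency constraint, since the normal directions to $D_k$ can be freely perturbed.

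For the second claim, I would exploit that for any $J\in\mathscr{J}$ the unparametrized Chern-$1$ $J$-holomorphic spheres with one marked point form a moduli space of virtual dimension $2n-4+2=2n-2$, so that the locus of points of $M$ lying on such a sphere has real codimension $2$. The combined evaluation $(r,u)\mapsto (J_{z_i(r)},u(z_i(r)))$ from a component of $\mapsms^{\al}_{d,\bp,\bn,\mathbf{m}}(\bx)$ of dimension at most $1$ then generically avoids this codimension-$2$ locus by an additional application of Sard--Smale to the evaluation map, performed simultaneously with the main transversality argument.

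The main obstacle I anticipate is verifying that \textbf{(Consistency)}, \textbf{(Forgetfulness)}, and \textbf{(Equivariance)} together leave enough freedom to run the universal moduli argument. Concretely, one must rule out, for every non-trivial configuration not already excluded in the statement, the possibility that every somewhere-injective point of $u$ is trapped inside one of the regions where perturbation data are rigidly prescribed (cylindrical ends, fibres of the forgetful map, or an earlier-constructed boundary stratum). The exclusion of the special case \textsl{$|Z^c(\bn)|=1$, $x_1=x_0$, $F=\emptyset$, $m_i=0$} is designed precisely to head off the problematic case in which forgetfulness forces $u$ to cover a Floer cylinder; handling the remaining cases will require a careful case analysis of how the forgetful map and somewhere-injectivity interact, analogous to the argument in \cite[Section 7]{perutz2022constructing}.
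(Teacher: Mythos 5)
Your overall framework (inductive construction over the partial order of moduli spaces, universal moduli space plus Sard--Smale for regularity, with the coherence conditions handled by pulling back and extending over the boundary) matches the paper's strategy. However, there are two points that need attention, one of which is a genuine gap.

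The main gap is in the Chern-number-$1$ sphere avoidance claim, where you propose a single mechanism (dimension count plus Sard--Smale on the evaluation map $(r,u)\mapsto (J_{z_i},u(z_i))$) for \emph{all} weight-zero marked points $i\in Z(\bn)$ with $u(z_i)\notin D$. This works for sprinkled points, where the perturbation data genuinely depends on the position of $z_i$, so one has enough freedom to make the evaluation transverse to the codimension-$2$ bad locus. But for a \emph{forgettable} (non-sprinkled) weight-zero marked point, \textbf{(Forgetfulness)} forces the perturbation data to be pulled back via the forgetful map and hence to be independent of the position of $z_i$. The universal linearization of the evaluation at $z_i$ then has no component in the perturbation directions, so the Sard--Smale argument does not apply -- you cannot independently move $u(z_i)$ off the bad set. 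The paper resolves this case by a different mechanism entirely: since $u(z_i)\notin D$ forces $m_i=0$, the forgetful map $\mapsms^{\al}_{d,\bp,\bn,\mathbf{m}}(\bx)\to\mapsms^{\al}_{d-1,\bp',\bn',\mathbf{m}'}(\bx)$ exists and drops the virtual dimension by $2$, so a $0$- or $1$-dimensional source component would map to a negative-dimensional (hence empty) target, making the claim vacuous. The paper explicitly notes that both arguments are necessary and that neither covers the other case; your proposal only has the one that fails precisely where you need the other.

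A secondary issue: you propose to achieve \textbf{(Equivariance)} by averaging perturbation data over $\Sym(d,\bp)$. This is risky, because averaging two regular perturbation data need not produce regular data, so it can be at odds with the Sard--Smale step. The paper instead uses the fact that $\Sym(d,\bp)$ acts \emph{freely} on $\domms_{d,\bp,\bn}$ (citing the analogue in [perutz2022constructing, Lemma 5.13]), so one may choose generic data on a slice and extend by the group action; this is the mechanism that makes equivariance compatible with transversality, and you should use it in place of averaging. The rest of your outline -- including the recognition that the excluded trivial case is exactly the configuration where forgetfulness forces $u$ to factor through a Floer cylinder -- is on target, though you leave the remaining case analysis unexecuted.
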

\begin{proof}
The construction is inductive with respect to the partial order generated by the relations $(d',\bp',\bn') \le (d,\bp,\bn)$ if $\domms_{d',\bp',\bn'}$ is a factor in a stratum of $\overline{\domms}_{d,\bp,\bn}$, or if $(d',\bp',\bn')$ is obtained by removing some inputs of weight zero which do not carry sprinkles. One easily verifies that this is a partial order.

Assume that regular perturbation data satisfying the desired conditions have been constructed over all moduli spaces $\domms_{d',\bp',\bn'}$ with $(d',\bp',\bn') < (d,\bp,\bn)$. If $(d,\bp,\bn)$ includes no inputs of weight zero without sprinkles, then the perturbation data are specified over the boundary by the \textbf{(Consistency)} condition and our inductive choices, and we may extend them over the interior so as to satisfy the \textbf{(Equivariance)} condition. 
This extension determines the perturbation data over the moduli spaces $\domms_{\tilde d,\tilde \bp,\tilde \bn}$ with $(\tilde d^{\red},\tilde \bp^\red,\tilde \bn^\red) = (d,\bp,\bn)$, in accordance with the \textbf{(Forgetfulness)} condition; these perturbation data satisfy \textbf{(Consistency)} and \textbf{(Equivariance)} automatically, and satisfy the required compatibility with the choice of subclosed $1$-forms and cylindrical ends \eqref{eq:KrFr}, \eqref{eq:Kr_cyl}, \eqref{eq:Jr_cyl} by the compatibility of these with forgetful maps. 

Using standard transversality techniques as in \cite[Lemma 6.7]{Cieliebak-Mohnke}, for a generic choice of such perturbation data, the moduli spaces $\mapsms^\al_{\tilde d,\tilde \bp,\tilde \bn, \mathbf{m}}(\bx)$ are regular for all $(\tilde d,\tilde \bp,\tilde \bn,\mathbf{m})$ such that $(\tilde d^{\red},\tilde \bp^\red,\tilde \bn^\red) = (d,\bp,\bn)$. 
Because $\Sym(d,\bp)$ acts freely on $\domms_{d,\bp,\bn}$, we may arrange this transversality compatibly with the \textbf{(Equivariance)} condition, cf. \cite[Lemma 5.13]{perutz2022constructing}. 
We may rule out so-called trivial solutions as in \cite[Lemma 4.4]{ES1}, making use of the fact that we arranged for all $SH$-type orbits to be disjoint in Lemma \ref{lemma: Hamiltonians}, and $x_0$ is not a $D$-orbit by assumption. 

To arrange the condition about non-intersection of curves with Chern number $1$ spheres, we consider two cases. If $i$ does not carry a sprinkle, then by the \textbf{(Forgetfulness)} property there exists a forgetful map 
$$\mapsms^{\al}_{d,\bp,\bn,\mathbf{m}}(\bx) \to \mapsms^{\al}_{d-1,\bp',\bn',\mathbf{m}'}(\bx)$$
which forgets the $i$th marked point. We have 
$$ \mathrm{vdim}\mapsms^{\al}_{d-1,\bp',\bn',\mathbf{m}'}(\bx) = \mathrm{vdim} \mapsms^{\al}_{d,\bp,\bn,\mathbf{m}}(\bx) - 2 < 0,$$
so the target of the forgetful map is empty; hence the source is also empty. 
On the other hand, if $i$ does carry a sprinkle, we use the fact that Chern number $1$ spheres sweep out a set of codimension $2$, whereas the evaluation map at $z_i$ sweeps out a set of dimension $\le 1$, hence they are generically disjoint. (Note that we really do need two arguments here: the first argument doesn't cover the second case, as there is no forgetful map forgetting a marked point carrying a sprinkle; and the second argument doesn't cover the first case due to the requirement that the perturbation data be pulled back via the forgetful map.)

We conclude by observing that in the special case of those moduli spaces such that forgetting all forgettable marked points gives a Floer cylinder, the regularity hypotheses are achieved by an appropriate choice of the data $J_n$, following the same argument above.
\end{proof}

Moreover, due to the curvature estimate $R(\mathcal{K}_r)\leq C_{\bn}d\gamma_r$, the moduli space $\mapsms^{\al}_{d,\bp,\bn,\mathbf{m}}(\bx)$ is empty unless
\begin{equation}
    A(x_0) \geq \sum_{i,n_i>0} A(x_i) + C_{\bn}\abs{F}.
\end{equation}

\subsection{Compactness}

The study of compactness is tricky because we only have transversality of moduli spaces $\mapsms^{\al}_{d,\bp,\bn,\mathbf{m}}(\bx)$ where the output $x_0$ is not a $D$-orbit. 
We will show that nevertheless, we have compactness of our moduli spaces under certain assumptions on the choice of orbits $\bx$.

Suppose that $\abs{m_i} = 1$ for all $i$ and that $\bx$ consists of $1$-periodic orbits such that for all $j\in Z^c(\bn)$,
\begin{equation}\label{F-filtration assumption}
   \mathscr{P}(x_j)>1 \quad \text{if } j>0,\quad\text{where}\quad \mathscr{P}(x) = Cn_x + \kappa^{-1}A(x) - \abs{x}.
\end{equation}
This assumption ensures in particular that the $1$-periodic orbits $x_j$ are in $X_{\sigma}$, by Lemma \ref{lemma: D-orbits are P-negative}. Assume further that
\begin{equation}\label{dimension assumption}
    \dim \domms^{\al}_{d,\bp,\bn} + \abs{x_0} - \sum_{i,n_i>0} \abs{x_i} - 2\abs{Z(\bn)} \leq 1.
\end{equation}

Let $u_{\infty}$ be a broken pseudo-holomorphic curve in the Gromov compactification which is modelled on a bubble tree $T$. The tree $T$ inherits from $(d,\bp,\bn)$ a tuple $(d_v,\bp_v,\bn_v)$ for each one of its vertices as described in \cite[Section 2.4]{ES1}. The tree $T$ is oriented from the root (denoted by $\rt$) to the leaves which are labelled by $\{1,\dots,d\}$. For each $v\in V(T)$, we denote by $e_{in}(v)$ its incoming edge (in the orientation root-to-leaves), $E_{\out}(v)$ the set of its outgoing edges, and $u_v$ the component of $u_{\infty}$ supported on $v$. 
In particular, spherical bubbles correspond to vertices $v$ such that $n_{e_{in}(v)} = 0$. Each internal edge $e$ of $T$ has a corresponding $1$-periodic Hamiltonian orbit $x_e$ for the Hamiltonian $H_{n_e}$, provided that $n_e>0$. The caps given to $x_e$ are obtained by adding the caps on the leaves descending from $e$, to the classes of curves $u_v$ associated with vertices $v$ descending from $e$. In the case when $n_e = 0$, we still use $x_e$ as a formal symbol and give it degree $2$ and action $0$. 

There are two potential compactness issues from which $u_{\infty}$ could suffer:
\begin{itemize}
    \item[(1)] It could have components limiting to a $D$-orbit.
    \item[(2)] It could have spherical bubbles.
\end{itemize}
We show that under assumptions \eqref{F-filtration assumption} and \eqref{dimension assumption}, neither issue occurs. A capped $1$-periodic orbit $x$ is said to be $\mathscr{P}$-negative if
\begin{equation}
    \mathscr{P}(x)<0.
\end{equation}
For example, Lemma \ref{lemma: D-orbits are P-negative} states that all $D$-orbits are $\mathscr{P}$-negative.

\begin{remark}
    The assumption \eqref{eq: C>2}, which says roughly that the slope of our Hamiltonian $h(\rho)$ should be sufficiently large near the divisor, is necessary to rule out the possibility of $u_\infty$ limiting to a $D$-orbit. Indeed it will be used in the proof of Lemma \ref{lemma: no D-orbits} below, but let us first give a more enlightening explanation of why it is necessary. Consider the framed cap from Example \ref{framed cap}, with $\mathbf{m} = (m_{1k}) = (\delta_{jk})$. Then the moduli space $\mapsms^{\al}_{d,\bp,\bn,\mathbf{m}}(\bx)$ can be interpreted as the leading-order part of the Piunikhin--Salamon--Schwarz isomorphism \cite{PSS}, applied to the Poincar\'e dual of the divisor $D_j$. When the slope of our Hamiltonian is small, the PSS map sends this class to a generator of Floer cohomology corresponding to a Morse critical point on $D_j$; in particular, a $D$-orbit. This cap could appear as a component $u_\infty$ in a Gromov limit. 
    However, as the slope of our Hamiltonian increases, the constant orbit bifurcates into a non-constant loop around $D_j$ (in particular, an $SH$-orbit); together with a pair of cancelling orbits, one of $SH$-type and one of $D$-type. 
    The cap $u_\infty$ now limits to an $SH$-orbit, rather than a $D$-orbit. 
\end{remark}

\begin{figure}
    \centering
    \begin{tikzpicture}[x=0.70pt,y=0.75pt,yscale=-1,xscale=1]
    \draw    (213.89,55.17) .. controls (198.47,55.9) and (199.09,113.94) .. (215.12,114.68) ;
    \draw    (35.05,55.17) .. controls (19.63,55.9) and (20.25,113.94) .. (36.28,114.68) ;
    \draw    (35.05,55.17) -- (213.89,55.17) ;
    \draw    (36.28,114.68) -- (215.12,114.68) ;
    \draw    (213.89,55.17) .. controls (226.84,55.17) and (227.46,115.41) .. (215.12,114.68) ;
    \draw  [dash pattern={on 4.5pt off 4.5pt}]  (35.05,55.17) .. controls (48,55.17) and (48.61,115.41) .. (36.28,114.68) ;
    \draw   (122.58,53) -- (126.35,57.49)(126.35,53) -- (122.58,57.49) ;
    \draw   (110.25,53) -- (114.02,57.49)(114.02,53) -- (110.25,57.49) ;
    \draw   (87.43,112.51) -- (91.2,117)(91.2,112.51) -- (87.43,117) ;
    \draw    (24.56,83.82) .. controls (62.18,83.08) and (88.7,89.7) .. (89.32,115.41) ;
    \draw   (63.41,87.86) .. controls (63.41,86.44) and (64.38,85.29) .. (65.57,85.29) .. controls (66.76,85.29) and (67.73,86.44) .. (67.73,87.86) .. controls (67.73,89.28) and (66.76,90.43) .. (65.57,90.43) .. controls (64.38,90.43) and (63.41,89.28) .. (63.41,87.86) -- cycle ;
    \draw    (535.73,54.52) .. controls (518.54,55.25) and (519.23,112.96) .. (537.1,113.69) ;
    \draw    (336.33,54.52) .. controls (319.14,55.25) and (319.83,112.96) .. (337.71,113.69) ;
    \draw    (336.33,54.52) -- (535.73,54.52) ;
    \draw    (535.73,54.52) .. controls (550.17,54.52) and (550.85,114.42) .. (537.1,113.69) ;
    \draw  [dash pattern={on 4.5pt off 4.5pt}]  (336.33,54.52) .. controls (350.77,54.52) and (351.46,114.42) .. (337.71,113.69) ;
    \draw   (420.18,52.37) -- (424.38,56.83)(424.38,52.37) -- (420.18,56.83) ;
    \draw   (394.43,154.23) .. controls (394.43,152.81) and (395.51,151.67) .. (396.84,151.67) .. controls (398.17,151.67) and (399.24,152.81) .. (399.24,154.23) .. controls (399.24,155.64) and (398.17,156.78) .. (396.84,156.78) .. controls (395.51,156.78) and (394.43,155.64) .. (394.43,154.23) -- cycle ;
    \draw   (405.09,36.26) .. controls (405.09,26.18) and (412.78,18) .. (422.28,18) .. controls (431.77,18) and (439.47,26.18) .. (439.47,36.26) .. controls (439.47,46.35) and (431.77,54.52) .. (422.28,54.52) .. controls (412.78,54.52) and (405.09,46.35) .. (405.09,36.26) -- cycle ;
    \draw   (402.99,33.38) -- (407.19,37.84)(407.19,33.38) -- (402.99,37.84) ;
    \draw   (437.37,33.38) -- (441.57,37.84)(441.57,33.38) -- (437.37,37.84) ;
    \draw    (378.27,114.42) .. controls (378.27,127.56) and (414.71,129.03) .. (414.03,114.42) ;
    \draw    (337.71,113.69) -- (378.27,114.42) ;
    \draw    (414.03,114.42) -- (537.1,113.69) ;
    \draw    (378.27,114.42) -- (378.27,149.48) ;
    \draw    (414.03,114.42) -- (414.03,149.48) ;
    \draw    (378.27,149.48) .. controls (378.96,180.89) and (414.03,179.43) .. (414.03,149.48) ;
    \draw    (396.84,124.64) -- (396.84,151.67) -- (396.84,172.85) ;
    \draw  [dash pattern={on 4.5pt off 4.5pt}]  (378.27,114.42) .. controls (378.27,103.46) and (412.65,102.73) .. (414.03,114.42) ;
    \draw   (458.38,132.31) .. controls (458.38,122.23) and (466.07,114.05) .. (475.56,114.05) .. controls (485.06,114.05) and (492.75,122.23) .. (492.75,132.31) .. controls (492.75,142.4) and (485.06,150.57) .. (475.56,150.57) .. controls (466.07,150.57) and (458.38,142.4) .. (458.38,132.31) -- cycle ;
    \draw   (473.81,112.26) -- (478.01,116.73)(478.01,112.26) -- (473.81,116.73) ;
    \draw   (394.74,170.7) -- (398.94,175.16)(398.94,170.7) -- (394.74,175.16) ;
    \draw    (242,84) -- (305,84) ;
    \draw [shift={(307,84)}, rotate = 180] [color={rgb, 255:red, 0; green, 0; blue, 0 }  ][line width=0.75]    (10.93,-3.29) .. controls (6.95,-1.4) and (3.31,-0.3) .. (0,0) .. controls (3.31,0.3) and (6.95,1.4) .. (10.93,3.29)   ;
    \draw    (323.96,85.2) .. controls (365.9,84.47) and (396.15,99.08) .. (396.84,124.64) ;
    \draw    (44,247) -- (81,248) ;
    \draw    (81,248) -- (112,307) ;
    \draw    (112,307) -- (197,318) ;
    \draw    (81,248) -- (151,275) ;
    \draw    (81,248) -- (185,249) ;
    \draw    (81,248) -- (129,214) ;
    \draw    (129,214) -- (152,236) ;
    \draw    (129,214) -- (157,214) ;
    \draw  [color={rgb, 255:red, 0; green, 0; blue, 0 }  ,draw opacity=1 ][fill={rgb, 255:red, 0; green, 0; blue, 0 }  ,fill opacity=1 ] (125.5,214) .. controls (125.5,212.07) and (127.07,210.5) .. (129,210.5) .. controls (130.93,210.5) and (132.5,212.07) .. (132.5,214) .. controls (132.5,215.93) and (130.93,217.5) .. (129,217.5) .. controls (127.07,217.5) and (125.5,215.93) .. (125.5,214) -- cycle ;
    \draw  [color={rgb, 255:red, 0; green, 0; blue, 0 }  ,draw opacity=1 ][fill={rgb, 255:red, 0; green, 0; blue, 0 }  ,fill opacity=1 ] (79,249) .. controls (79,247.07) and (80.57,245.5) .. (82.5,245.5) .. controls (84.43,245.5) and (86,247.07) .. (86,249) .. controls (86,250.93) and (84.43,252.5) .. (82.5,252.5) .. controls (80.57,252.5) and (79,250.93) .. (79,249) -- cycle ;
    \draw  [color={rgb, 255:red, 0; green, 0; blue, 0 }  ,draw opacity=1 ][fill={rgb, 255:red, 0; green, 0; blue, 0 }  ,fill opacity=1 ] (107,306) .. controls (107,304.07) and (108.57,302.5) .. (110.5,302.5) .. controls (112.43,302.5) and (114,304.07) .. (114,306) .. controls (114,307.93) and (112.43,309.5) .. (110.5,309.5) .. controls (108.57,309.5) and (107,307.93) .. (107,306) -- cycle ;
    \draw  [color={rgb, 255:red, 0; green, 0; blue, 0 }  ,draw opacity=1 ][fill={rgb, 255:red, 0; green, 0; blue, 0 }  ,fill opacity=1 ] (147.5,275) .. controls (147.5,273.07) and (149.07,271.5) .. (151,271.5) .. controls (152.93,271.5) and (154.5,273.07) .. (154.5,275) .. controls (154.5,276.93) and (152.93,278.5) .. (151,278.5) .. controls (149.07,278.5) and (147.5,276.93) .. (147.5,275) -- cycle ;
    \draw    (351,249) -- (388,250) ;
    \draw    (388,250) -- (419,309) ;
    \draw    (419,309) -- (504,320) ;
    \draw    (388,250) -- (458,277) ;
    \draw    (388,250) -- (492,251) ;
    \draw    (388,250) -- (436,216) ;
    \draw  [color={rgb, 255:red, 0; green, 0; blue, 0 }  ,draw opacity=1 ][fill={rgb, 255:red, 0; green, 0; blue, 0 }  ,fill opacity=1 ] (386,251) .. controls (386,249.07) and (387.57,247.5) .. (389.5,247.5) .. controls (391.43,247.5) and (393,249.07) .. (393,251) .. controls (393,252.93) and (391.43,254.5) .. (389.5,254.5) .. controls (387.57,254.5) and (386,252.93) .. (386,251) -- cycle ;
    \draw  [color={rgb, 255:red, 0; green, 0; blue, 0 }  ,draw opacity=1 ][fill={rgb, 255:red, 0; green, 0; blue, 0 }  ,fill opacity=1 ] (414,308) .. controls (414,306.07) and (415.57,304.5) .. (417.5,304.5) .. controls (419.43,304.5) and (421,306.07) .. (421,308) .. controls (421,309.93) and (419.43,311.5) .. (417.5,311.5) .. controls (415.57,311.5) and (414,309.93) .. (414,308) -- cycle ;
    \draw    (222,249) -- (324,249) ;
    \draw [shift={(326,249)}, rotate = 180] [color={rgb, 255:red, 0; green, 0; blue, 0 }  ][line width=0.75]    (10.93,-3.29) .. controls (6.95,-1.4) and (3.31,-0.3) .. (0,0) .. controls (3.31,0.3) and (6.95,1.4) .. (10.93,3.29)   ;

    \draw (115,73) node [anchor=north west][inner sep=0.75pt]   [align=left] {$u$};
    \draw (366,275) node [anchor=north west][inner sep=0.75pt]   [align=left] {$T'$};
    \draw (55,278) node [anchor=north west][inner sep=0.75pt]   [align=left] {$T$};
    \draw (420,73) node [anchor=north west][inner sep=0.75pt]   [align=left] {$u_{\infty}$};
    \draw (247,65) node [anchor=north west][inner sep=0.75pt]   [align=left] {Gromov};
    \draw (257,87) node [anchor=north west][inner sep=0.75pt]   [align=left] {limit};
    \draw (248,225) node [anchor=north west][inner sep=0.75pt]   [align=left] {Pruning};
    \end{tikzpicture}
    \caption{Pruning weight $0$ internal edges of a tree.}
    \label{fig: gromov compactification}
\end{figure}

\begin{lemma}\label{lemma: no D-orbits}
    No component of $u_{\infty}$ limits to a $\mathscr{P}$-negative orbit.
\end{lemma}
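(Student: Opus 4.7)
\begin{sketch}
The plan is to telescope action--index inequalities across the bubble tree $T$, using the curvature bound \eqref{eq: C>2} to dominate the virtual dimension contributions.

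\textbf{Vertex-wise inequality.} At each non-spherical vertex $v \in V(T)$, three ingredients combine: (i) the action inequality
$$A(x_{e_{in}(v)}) \ge \sum_{e' \in E_{out}(v),\, n_{e'}>0} A(x_{e'}) + C_{\bn_v}|F_v|,$$
which follows from $E(u_v) \ge 0$ together with the curvature bound \eqref{curvature estimate}; (ii) the Fredholm index identity \eqref{eq: dimension formula 1}; and (iii) the weight-conservation identity $n_{e_{in}(v)} = \sum_{e' \in E_{out}(v)} n_{e'} + |F_v|$. Combining these and using $\mathscr{P}(x) = Cn_x + \kappa^{-1}A(x) - |x|$ yields
$$\mathscr{P}(x_{e_{in}(v)}) \ge \sum_{e' \in E_{out}(v),\, n_{e'}>0}\mathscr{P}(x_{e'}) + (C + \kappa^{-1}C_{\bn_v})|F_v| + \dim\domms^{\al}_{d_v,\bp_v,\bn_v} - \text{vdim}_v - 2|\mathbf{m}_v|.$$

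\textbf{Telescoping.} Suppose for contradiction that $\mathscr{P}(x_e) < 0$ for some edge $e \in T$ with $n_e > 0$. Choose such $e$ minimizing its distance to the leaves, so that every positive-weight edge strictly below $e$ in the subtree $T_e$ has a $\mathscr{P}$-nonnegative asymptote. Sum the vertex-wise inequality over the non-spherical vertices of $T_e$: each positive-weight internal edge appears once with a $+$ sign (as the output of its child) and once with a $-$ sign (as an input of its parent), while weight-zero internal edges, which lead into sphere bubbles, do not appear in the restricted sum at all. The result is
$$\mathscr{P}(x_e) \ge \sum_{\text{leaves } j \in T_e,\, n_j > 0}\mathscr{P}(x_j) + \sum_{v \in T_e^{\text{non-sph}}} \Big[(C + \kappa^{-1}C_{\bn_v})|F_v| + \dim\domms^{\al}_{d_v,\bp_v,\bn_v} - \text{vdim}_v - 2|\mathbf{m}_v|\Big].$$
Sphere bubbles only improve this inequality: each non-trivial $J$-holomorphic sphere $u_s$ contributes $+2c_1(u_s) \ge 2$ to the right-hand side by positive monotonicity, since its area $\omega(u_s)=2\kappa c_1(u_s)$ is absorbed into the cap of an adjacent orbit.

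\textbf{Deriving the contradiction.} Bound the right-hand side below using (a) $\mathscr{P}(x_j) > 1$ from \eqref{F-filtration assumption}; (b) the key inequality $C + \kappa^{-1}C_{\bn_v} > 2$ from \eqref{eq: C>2}; and (c) the global bound $\sum_v \text{vdim}_v \le \text{vdim}_{\text{main}} \le 1$ coming from \eqref{dimension assumption} together with the fact that $u_\infty$ belongs to a boundary stratum of codimension $\ge 1$. Together with the explicit dimension formula $\dim\domms^{\al}_{d_v,\bp_v,\bn_v} = 2d_v - 3 + |F_v|$ (for $d_v \ge 2$) and the tree-combinatorial identities $\sum_v |F_v| = |F|$, $\sum_v |\mathbf{m}_v| = |Z(\bn)|$, these bounds force the right-hand side to be strictly positive, so that $\mathscr{P}(x_e) > 0$, contradicting our assumption.

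The main obstacle is the bookkeeping in the final step: one must handle the combinatorics of the bubble tree carefully, deal with the degenerate case where $T_e$ contains no positive-weight leaves (so that the positivity conclusion must come entirely from the geometric curvature-minus-tangency contributions $(C + \kappa^{-1}C_{\bn_v})|F_v| - 2|\mathbf{m}_v|$), and confirm that the dimension identities remain valid at intermediate vertices whose output orbits might be $D$-orbits, where strict Fredholm regularity from Lemma \ref{lemma: transversality} may fail but the underlying index computation persists.
\end{sketch}
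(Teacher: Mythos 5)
Your high-level strategy — telescoping $\mathscr{P}$-inequalities across the bubble tree using the curvature bound \eqref{eq: C>2} — is in the right spirit, and the vertex-wise inequality you write down (which is an algebraic rearrangement of the index identity, weight conservation, and the energy/curvature bound) does hold at every vertex, without requiring any regularity. However, there are two substantial gaps.

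First, your argument relies on the bound $\sum_{v\in T_e} \text{vdim}_v \le 1$, obtained by subtracting off the contributions of vertices outside $T_e$ from $\sum_v \text{vdim}_v = \text{vdim}_{\text{main}} - \abs{\iE(T)}$. For this to work you need $\text{vdim}_v \ge 0$ for all $v \notin T_e$, i.e., Fredholm regularity there. But regularity (from Lemma \ref{lemma: transversality}) is only guaranteed when the vertex's incoming asymptote is not a $D$-orbit — and ruling out $D$-orbit asymptotes is precisely what the lemma is trying to prove. If $e$ is not the unique $\mathscr{P}$-negative edge (several sibling subtrees can simultaneously limit to $D$-orbits), your choice of a minimal such $e$ does not control the vertices outside $T_e$, and the needed bound may fail. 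You flag this at the end but dismiss it by saying ``the underlying index computation persists'' — the index \emph{formula} always holds, but what you need is the \emph{regularity assertion} that a non-empty moduli space has non-negative virtual dimension, and that is exactly what fails. This is the gap the paper closes with its classification into Types 1, 2, 3 relative to the set $V_D$ of topmost $\mathscr{P}$-negative vertices: at Type 1 vertices (all ancestors non-$\mathscr{P}$-negative) one may use Fredholm regularity, while at Type 2 vertices (in $V_D$) the definition of $\mathscr{P}$-negativity itself supplies the index lower bound $\abs{x_{e_{in}(v)}} \ge C n_{x_{e_{in}(v)}} + \kappa^{-1}A(x_{e_{in}(v)})$ as a substitute for regularity, and at Type 3 vertices (below $V_D$) only the action bound is used. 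This partition lets the telescoping use dimension information only where it is justified, and it is done over the full pruned tree $T'$, not a subtree.

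Second, the final positivity of the right-hand side is asserted rather than established. Even granting the $\text{vdim}$ bound, the sum $\sum_{v\in T_e}\bigl[(C+\kappa^{-1}C_{\bn_v})\abs{F_v} + \dim\domms^{\al}_{d_v,\bp_v,\bn_v} - 2\abs{\mathbf{m}_v}\bigr]$ has competing terms: the $-2\abs{\mathbf{m}_v}$ from tangency points is not automatically dominated by the sprinkle contributions $(C+\kappa^{-1}C_{\bn_v})\abs{F_v}$, because forgettable marked points carry a tangency ($\abs{m_i}=1$) but no sprinkle. The paper's contradiction is more delicate: after the telescoping it first shows that $J^c_{>0}$ must be empty (no positive-weight leaf can descend from $V_D$, because each such leaf has $\mathscr{P}(x_j)>1$ but the total must be $\le 1$), and only then observes that a nonempty $V_D$ forces some descendant vertex $v'$ to have $\abs{F_{v'}}\ge 1$, contributing $(1+C+\kappa^{-1}C_{\bn_{v'}})\abs{F_{v'}}>3$ and overrunning the global dimension budget. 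Your sketch also misstates the role of sphere bubbles: adding a sphere class to a cap is $\mathscr{P}$-neutral (action and index shifts cancel), so they don't simply ``improve the inequality by $2c_1$''; the paper instead tracks $u_v\cdot_{z_e}D + c_1(B_e) \ge 1$ at the attaching node, and this is what makes the Type 1 index inequality \eqref{eq:reg ineq} usable.
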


\begin{proof}
    Let $T'$ be the subtree of $T$ which is obtained by pruning all weight $0$ internal edges (see Figure \ref{fig: gromov compactification}). The tree $T'$ supports the non-spherical components of $u_{\infty}$ so that $n_{e_{in}(v)}>0$ for all $v\in V(T')$. It has the same number of sprinkles as $T$, but a potentially different number of leaves $d'$. We note that it is possible to have $d'\geq d$ if $T$ contains domain-unstable spherical bubbles.
    
    Consider the subset $V_D\subseteq V(T')$ of vertices given by the property that $v\in V_D$ iff
    \begin{itemize}
        \item[(i)] $x_{e_{in}(v)}$ is a $\mathscr{P}$-negative orbit, and
        \item[(ii)] For all $v'\in [\rt,v)$, $x_{e_{in}(v')}$ is not a $\mathscr{P}$-negative orbit.
    \end{itemize}
    A vertex $v\in V(T')$ is one of three types:
    \begin{itemize}
        \item[-] Type 1: The interval $[\rt,v]$ is disjoint from $V_D$. Each weight $0$ edge $e\in E_{\out}(v)$ is attached to a spherical bubble tree $B_e$ ($B_e$ is empty if $e$ is a leaf), and contributes a (possibly $0$) tangency order $u_v\cdot_{z_e} D$ to $u_v$ in the dimension formula \eqref{eq: dimension formula 1}. 
        Since $x_{e_{in}(v)}$ is not a $\mathscr{P}$-negative orbit, as long as the curve $u_v$ is not a trivial solution, it is Fredholm regular so that
        \begin{align}\label{eq:type 1 reg}
            \dim \domms^{\al}_{d'_v,\bp'_v,\bn'_v} + \abs{x_{e_{in}(v)}} &\geq \sum_{\substack{e\in E_{\out}(v)\\n_e>0}}\abs{x_e}+\sum_{\substack{e\in E_{\out}(v)\\n_e=0}} 2(u_v\cdot_{z_e} D  + c_1(B_e)) \\
    \nonumber        & = \sum_{e \in E_{\out}(v)} \abs{x_e} + \sum_{\substack{e\in E_{\out}(v)\\n_e=0}} 2(u_v\cdot_{z_e} D  + c_1(B_e) - 1).
        \end{align}
        The terms $c_1(B_e)$ are due to the difference between the cap given to $x_{e_{in}(v)}$ from $T$, compared to the one it receives from $T'$. Observe that in all cases $u_v\cdot_{z_e} D  + c_1(B_e)\geq 1$. Therefore,
        \begin{equation}\label{eq:reg ineq}
            \dim \domms^{\al}_{d'_v,\bp'_v,\bn'_v} + \abs{x_{e_{in}(v)}}\geq \sum_{e\in E_{\out}(v)}\abs{x_e}.
        \end{equation} 
        We now address the case that $u_v$ is a trivial solution, so its image is contained in an $SH$-type orbit $x_{e_{in}(v)}$, there is a single edge $e' \in E_{\out}(v)$ with $n_e>0$, and $x_{e'} = x_{e_{in}(v)}$. 
        We then have
        \begin{equation}
            \abs{x_{e_{in}(v)}} = \abs{x_{e'}} + \sum_{\substack{e\in E_{\out}(v)\\n_e=0}} 2c_1(B_e),
        \end{equation}
        and $\dim \domms^\al_{d'_v,\bp'_v,\bn'_v} \ge 0$ (this is required for the curve to be stable), from which \eqref{eq:type 1 reg} follows. We have $c_1(B_e) \ge 1$ for all $e \in E_\out(v)$ with $n_e = 0$, so \eqref{eq:reg ineq} follows.
        
        \item[-] Type 2: $v\in V_D$, so that $x_{e_{in}(v)}$ is a $\mathscr{P}$-negative orbit. In this case,
        \begin{align*}
            \abs{x_{e_{in}(v)}}&\geq Cn_{x_{e_{in}(v)}} + \kappa^{-1}A(x_{e_{in}(v)})\\
            &\geq Cn_{x_{e_{in}(v)}} + \kappa^{-1}C_{\bn_v}\abs{F_v}+ \sum_{e\in E_{\out}(v)} \kappa^{-1}A(x_e) \\
            &= C\abs{F_v} + \kappa^{-1}C_{\bn_v}\abs{F_v} + \sum_{e\in E_{\out}(v)} Cn_{x_e}+ \kappa^{-1}A(x_e)
        \end{align*}
        We rewrite this inequality as
        \begin{equation}
            -(C+\kappa^{-1}C_{\bn_v})\abs{F_v} + \abs{x_{e_{in}(v)}} \geq \sum_{e\in E_{\out}(v)} Cn_{x_e}+ \kappa^{-1}A(x_e)
        \end{equation}
        \item[-] Type 3: The interval $[\rt,v)$ contains an element from $V_D$. In this case, we use the action inequality
        \begin{equation}
            -(C+\kappa^{-1}C_{\bn_v})\abs{F_v} + Cn_{x_{e_{in}(v)}} +\kappa^{-1}A(x_{e_{in}(v)}) \geq \sum_{e\in E_{\out}(v)} Cn_{x_e}+ \kappa^{-1}A(x_e).
        \end{equation}
    \end{itemize}
    We now add the previous inequalities across all $v\in V(T')$. The result is
    \begin{equation}\label{compactness inequality}
     \sum_{v\in I} \dim \domms^{\al}_{d'_v,\bp'_v,\bn'_v} - \sum_{v\notin I}(C+\kappa^{-1}C_{\bn_v})\abs{F_v}  + \abs{x_0} \geq \sum_{j\in J} \abs{x_j}  + \sum_{j\notin J}Cn_{x_j}+ \kappa^{-1}A(x_j),
    \end{equation}
    where $I\subseteq V(T')$ is the collection of vertices of type 1, and $J$ is the collection of leaves of $T'$ whose path to the root contains no element of $V_D$. 
    
    We rewrite the inequality \eqref{compactness inequality} as
    \begin{align*}
        \sum_{j\notin J} \mathscr{P}(x_j) 
        & \leq \sum_{v\in V(T')} \dim \domms^{\al}_{d'_v,\bp'_v,\bn'_v} + \abs{x_0}-\sum_{i=1}^{d'}\abs{x_i}\\
        & \quad\quad\quad - \sum_{v\notin I}(\dim\domms^{\al}_{d'_v,\bp'_v,\bn'_v} + (C+\kappa^{-1}C_{\bn_v})\abs{F_v}) \\
        & = \dim \domms^{\al}_{d',\bp',\bn'} - \abs{\iE(T')} + \abs{x_0}-\sum_{i=1}^{d'}\abs{x_i}\\ 
        &- \sum_{v\notin I}(2d_v-3+(1+C+\kappa^{-1}C_{\bn_v})\abs{F_v})\\
        &= \dim \domms^{\al}_{d,\bp,\bn} + \abs{x_0}-\sum_{i=1}^{d}\abs{x_i}\\ 
        &- \abs{V(T')} + 1
        - \sum_{v\notin I}(2d_v-3+(1+C+\kappa^{-1}C_{\bn_v})\abs{F_v})
    \end{align*}
    We now decompose the complement $J^c$ of $J$ into null-weighted leaves and positively-weighted leaves,
    \begin{equation}
        J^c_{0} = \{ j\notin J \ | \ n_j = 0 \} \quad\text{and}\quad J^c_{>0} = \{ j\notin J \ | \ n_j > 0 \}.
    \end{equation}
    
    Moreover, for each vertex $v\notin I$, we write $J^c_v$ for the set of leaves $j\notin J$ which are adjacent to $v$. Since $J^c$ is the union of all $(J^c_v)_{v\notin I}$, we can rewrite the previous inequality as
    \begin{align}\label{key inequality}
        \sum_{v\notin I} (2d_v-2+&(1+C+\kappa^{-1}C_{\bn_v})\abs{F_v}+ \sum_{j\in J^c_v} \mathscr{P}(x_j)) \\
        &\leq  \dim \domms^{\al}_{d,\bp,\bn}+\abs{x_0}-\sum_{i=1}^d \abs{x_i} + 1-\abs{V(T')}+\abs{V(T')\backslash I}\\
        &\leq 2 - \abs{I} \leq 1.
    \end{align}
    However, since $\mathscr{P}(x_j)=-2$ for all $j\in J^c_{0}$, we deduce that for all $v\notin I$,
    \begin{align*}
        2d_v-2+&(1+C+\kappa^{-1}C_{\bn_v})\abs{F_v}+ \sum_{j\in J^c_v} \mathscr{P}(x_j)\\
        &= 2(d_v-\abs{J^c_v\cap J^c_{0}})-2 + (1+C+\kappa^{-1}C_{\bn_v})\abs{F_v} + \sum_{j\in J^c_v\cap J^c_{>0}} \mathscr{P}(x_j)\notag\\
        &\geq \sum_{j\in J^c_v\cap J^c_{>0}} \mathscr{P}(x_j).\notag
    \end{align*}
    The last inequality is true because $2(d_v-\abs{J^c_v\cap J^c_{0}})\geq 2$, unless all of the outgoing vertices of $v$ are weight-zero leaves. In that case, $F_v$ is non-empty and $(1+C+\kappa^{-1}C_{\bn_v})\abs{F_v} > 3$, see \eqref{eq: C>2}.
    
    It follows that $\sum_{j\in J^c_{>0}} \mathscr{P}(x_j) \leq 1$ and hence $J^c_{>0}$ must be empty. Therefore, \eqref{key inequality} reduces to 
    \begin{equation}
       \sum_{v\notin I} 2(d_v-\abs{J^c_v\cap J^c_{0}})-2+(1+C+\kappa^{-1}C_{\bn_v})\abs{F_v} \leq 1
    \end{equation}
    
    Suppose now that $V_D$ is non-empty and let $v\in V_D$. Then $v\notin I$, and since $J^c_{>0}$ is empty, all the leaves flowing from $v$ must have weight $0$. It follows that there is at least one vertex $v'\notin I$ which flows from $v$ such that $\abs{F_{v'}}\geq 1$. Therefore, we get that $2(d_{v'}-\abs{J^c_{v'}\cap J^c_{0}})+(1+C+\kappa^{-1}C_{\bn_{v'}})\abs{F_{v'}} > 3$ for at least one $v'\notin I$. This is in contradiction with the inequality above. We conclude that $V_D$ must be empty.
\end{proof}

\begin{remark}\label{remark: compactness for $0$-dimensional moduli spaces}
    In the case of $0$-dimensional moduli spaces
    \begin{equation}
        \dim \domms^{\al}_{d,\bp,\bn} + \abs{x_0} - \sum_{i,n_i>0} \abs{x_i} - 2\abs{Z(\bn)} = 0,
    \end{equation}
    the same proof above works provided only $\mathscr{P}(x_j)>0$ for all $j\in Z^c(\bn)$.
\end{remark}

\begin{lemma}\label{lemma: no sphere bubbles}
    No component of $u_{\infty}$ is a spherical bubble.
\end{lemma}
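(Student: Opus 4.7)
\begin{sketch}
I would follow the same strategy as the proof of Lemma \ref{lemma: no D-orbits}, tracking the extra slack that a non-trivial sphere subtree contributes to the Type 1 inequality. By Lemma \ref{lemma: no D-orbits}, under \eqref{F-filtration assumption} and \eqref{dimension assumption} we may assume $V_D = \emptyset$, so every vertex $v \in V(T')$ is of Type 1.

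Suppose for contradiction that $u_\infty$ contains a non-constant spherical component. Positive monotonicity gives $\omega(A) = 2\kappa\, c_1(A)$, so any non-trivial spherical class has $c_1 \geq 1$; hence at least one outermost sphere subtree $B_e$, attached to $T'$ along a weight-zero outgoing edge $e$ of some $v \in V(T')$, satisfies $c_1(B_e) \geq 1$. Moreover $u_v \cdot_{z_e} D \geq 0$ by positivity of intersection (which holds since every $J \in \mathscr{J}$ respects $D$), so at such an attachment the corresponding term in the Type 1 inequality \eqref{eq:type 1 reg} is in fact $2(u_v \cdot_{z_e} D + c_1(B_e) - 1) \geq 2c_1(B_e) \geq 2$, strictly stronger than the bound $\geq 0$ that was used in Lemma \ref{lemma: no D-orbits}.

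I would then sum \eqref{eq:type 1 reg} over all $v \in V(T')$ and apply the same combinatorial identities as in the proof of Lemma \ref{lemma: no D-orbits}, in particular the cancellation of internal-edge contributions and the identity $\sum_v \dim \domms^{\al}_{d'_v,\bp'_v,\bn'_v} = \dim \domms^{\al}_{d',\bp',\bn'} - |\iE(T')|$, together with the dimension hypothesis \eqref{dimension assumption}. Because $V_D = \emptyset$ there are no Type 2 or Type 3 contributions and no action-based inequalities are invoked, so the bookkeeping simplifies considerably. The extra slack $2\sum_{e : n_e = 0} c_1(B_e) \geq 2$ propagates through the summation and produces an inequality of the form $0 \leq 1 - 2\sum_e c_1(B_e) \leq -1$, a contradiction. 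Ghost bubbles with $c_1 = 0$ do not qualify as genuine spherical components in the sense of the lemma and so may be ignored.

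The step that will require the most care is the precise bookkeeping: one has to verify that the combinatorial simplification of Lemma \ref{lemma: no D-orbits} preserves the additional $2c_1(B_e)$ contributions without cancellation, accounting correctly for the weight-zero leaves of $T'$ at which the sphere subtrees attach and for the fact that the pruning operation relating $T$ to $T'$ replaces each sphere subtree by a single such leaf. I do not expect any new analytic input beyond what is already used in Lemma \ref{lemma: no D-orbits}.
\end{sketch}
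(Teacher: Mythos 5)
Your bookkeeping contains an error that breaks the argument. You claim that at a sphere attachment $e$ the Type~1 term satisfies $2(u_v \cdot_{z_e} D + c_1(B_e) - 1) \geq 2c_1(B_e) \geq 2$. This would require $u_v \cdot_{z_e} D \geq 1$, but in the Gromov limit the marked point carrying the tangency constraint has escaped to the bubble $B_e$, so there is no constraint forcing the main component $u_v$ to intersect $D$ at $z_e$; in fact $u_v \cdot_{z_e} D = 0$ is exactly the generic situation. All positivity of intersection gives is $u_v \cdot_{z_e} D \geq 0$, hence the term is merely $\geq 2(c_1(B_e)-1)$, which vanishes when $c_1(B_e)=1$ --- precisely the case the lemma must rule out. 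Thus the dimension count alone does not produce a contradiction; what it actually yields (after summing and applying \eqref{dimension assumption}) is the equality $u\cdot_{z_j} D + c_1(B_j) = 1$ for every weight-zero leaf of $T'$, i.e.\ that any non-trivial sphere subtree must have $u\cdot_{z_j}D = 0$ and $c_1(B_j) = 1$.

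This is where the genuine new input comes in, which your sketch omits entirely: Chern-number-one sphere bubbles passing through the image of the curve cannot be eliminated by index arithmetic, because they appear in codimension $0$ relative to the expected dimension once the marked point has escaped. They are instead ruled out by a transversality argument --- the second clause of Lemma~\ref{lemma: transversality} (for non-trivial solutions, using the forgetful map when the marked point carries no sprinkle, and a generic avoidance argument when it does), and the genericity of the time-dependent $J_n$ (for trivial solutions, cf.\ \cite[Theorem 3.1 (ii)]{HoferSalamon} as invoked in \S\ref{subsec: Almost complex structures}). Without this step the proof is incomplete. The combinatorial skeleton of your argument (prune $T$ to $T'$, sum the regularity inequality \eqref{eq:type 1 reg} over vertices, invoke \eqref{dimension assumption}) does match the paper's, and your observation that $V_D = \emptyset$ simplifies the bookkeeping is correct; the gap is in the two points above.
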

\begin{proof}
    We use the same notation and terminology of the previous lemma. The tree $T'\subseteq T$ has the property that all of its internal edges satisfy $n_{e}>0$, and the $1$-periodic orbit $x_e$ is not a $\mathscr{P}$-negative orbit. In particular, by \eqref{eq:type 1 reg} we have
    \begin{equation}
        \dim \domms^{\al}_{d'_v,\bp'_v,\bn'_v} + \abs{x_{e_{in}(v)}}\geq \sum_{e\in E_{\out}(v)}\abs{x_e}+\sum_{\substack{e\in E_{\out}(v)\\n_e=0}} 2(u_v\cdot_{z_e} D  + c_1(B_e) - 1)
    \end{equation}
        for all $v\in V(T')$.  
        By summing over all vertices, we get
    \begin{equation}
        \dim \domms^{\al}_{d',\bp',\bn'} - \abs{\iE(T')} + \abs{x_0}\geq \sum_{j\in\lf(T')} \abs{x_j} + \sum_{\substack{j\in\lf(T')\\ n_j=0}} 2(u\cdot_{z_j} D + c_1(B_j) - 1),
    \end{equation}
    where $\lf(T')$ is the set of leaves of $T'$. Using \eqref{dimension assumption}, we deduce that
    \begin{equation}
        2\sum_{\substack{j\in\lf(T')\\ n_j=0}} (u\cdot_{z_j} D + c_1(B_j)-1) \leq 1-\abs{\iE(T')}.
    \end{equation}
    Recall that $u\cdot_{z_j} D + c_1(B_j)\geq 1$ for all $0$-weighted leaves $j$. It follows that $u\cdot_{z_j} D + c_1(B_j) = 1$ for all $0$-weighted leaves. This can only happen if $u\cdot_{z_j} D = 0$ and $B_j$ is a non-constant bubble tree of Chern number $1$. Such bubbles are ruled out for trivial solutions by our choice of Floer data $J_n$, and for non-trivial solutions by our choice of perturbation data, see Lemma \ref{lemma: transversality}.
\end{proof}

\section{$L_{\infty}$ operations}

In this section we use the moduli spaces constructed in the previous section to construct the algebraic structures needed for our main construction.

\subsection{Symplectic cohomology}

The sequence $\bH$ of Hamiltonians can be used to describe the symplectic cohomology of the Liouville domain $X_{\sigma}\subseteq X$. For each term $H_n$ in the basic sequence $\bH$, let $CF^*(X_{\sigma},H_n)$ be the $\mathbb{Z}$-graded vector space over $\bk$ which is freely generated by the $1$-periodic orbits of $H_n$ inside $X_{\sigma}$,
\begin{equation}\label{eq: Floer complex for SH}
    CF^*(X_{\sigma}, H_n) = \bigoplus_{\gamma\subseteq X_{\sigma}} \abs{o_{\gamma}}_{\bk}.
\end{equation}
In \eqref{eq: Floer complex for SH}, the $1$-periodic orbit $\gamma$ is required to be contractible inside of $M$, but we do not equip it with a cap. Its index $\abs{\gamma}_{SH}$ is computed using the trivialization of $K_{X_{\sigma}}^{\otimes 2}$ which is determined by the equation $2c_1(M) = \pd(D^{\blambda})$, where
\begin{equation}
    D^{\blambda} = \lambda_1D_1 + \dots + \lambda_nD_n.
\end{equation}
In \cite[Lemma 3.8]{BSV}, it is shown that
\begin{equation}\label{eq: SH-index}
    \abs{\gamma}_{SH} = \abs{(\gamma,u)} + u\cdot D^{\blambda},
\end{equation}
where $u$ is any cap for $\gamma$ inside $M$. 
Similarly, the $SH$-action of $\gamma$ is defined by
\begin{equation}
    A^{SH}_{H_n}(\gamma) = -\int_{S^1}\gamma^*\theta + \int_{S^1} H_n(\gamma(t))dt.
\end{equation}
It is related to our previous action functional \eqref{eq: action functional} by the formula
\begin{equation}
    A^{SH}_{H_n}(\gamma) = A_{H_n}(\gamma, u) + \kappa\cdot u\cdot D^{\blambda}.
\end{equation}
In \cite[\S 1.4]{BSV}, the authors introduce a \emph{fractional cap} which is written formally as
\begin{equation}
    u_{in} = u - u\cdot D^{\blambda},
\end{equation}
where $u$ is any cap for $\gamma$ in $M$. In terms of the fractional cap, we have
\begin{equation}
    \abs{\gamma}_{SH} = \abs{(\gamma,u_{in})}\quad\text{and}\quad A^{SH}_{H_n}(\gamma) = A_{H_n}(\gamma, u_{in}).
\end{equation}

For each $n\in \mathbb{N}$, we have a continuation chain map
\begin{equation}
    c: CF^*(X_{\sigma},H_n)\rightarrow CF^*(X_{\sigma},H_{n+1}).
\end{equation}
The homotopy direct limit of continuation maps is the chain complex
\begin{equation}\label{telescope complex}
    SC^*(X_{\sigma},\mathbf{H}) = \bigoplus_{n=1}^{\infty}CF^*(X_{\sigma},H_n)[t],
\end{equation}
where $t$ is a formal variable of degree $-1$ such that $t^2=0$. It is equipped with the differential
\begin{equation}
    d^{SC}(\gamma+t\gamma') = (-1)^{\abs{\gamma}}d\gamma + (-1)^{\abs{\gamma'}}(td\gamma' + c(\gamma') - \gamma').
\end{equation}
The cohomology of this complex is called symplectic cohomology and it is denoted by $SH^*(X_{\sigma})$. It only depends on the deformation type of the Liouville domain $X_{\sigma}$.

\begin{remark}
    When the weights $\lambda_j$ are not integers, but only rational numbers, the degree $\abs{\gamma}_{SH}$ may not be an integer, so we need to explain the meaning of the sign $(-1)^{\abs{\gamma}}$. In this case we actually equip our complex with a $(\Q \oplus \Z/2)$-grading, where the $\Q$-component is as defined above, and the $\Z/2$-component is the parity of the $\Z$-grading defined using any cap (which is independent of the choice of cap by \eqref{eq:ind_act_change}). Then if $\abs{\gamma} = (q,\sigma)$, we define $(-1)^{\abs{\gamma}}:=(-1)^\sigma$.
\end{remark}

\subsection{Partial $L_{\infty}$ operations}

Consider a triplet $(d,\bp,\bn)$, and a tuple $\bx = (x_0,\dots,x_d)$ such that for all $i \in \{0,\dots,d\}$:
\begin{itemize}
    \item[-] If $i\in Z^c(\bn)$, then $x_i$ is an $SH$-orbit of $H_{n_i}$.
    \item[-] If $i\in Z(\bn)$, then $x_i = x^{D_{j_i}}$ is a label for one of the components of $D$.  
\end{itemize}

We introduce notation
$$\mapsms^{\al}_{d,\bp,\bn}(\bx)\defeq \left\{ (r,u) \in \mapsms^{\al}_{d,\bp,\bn,\mathbf{m}}(\bx') \ \bigg|\ [u] \cdot D = \abs{Z(\bn)}\right\},$$
where $\bx'$ is obtained from $\bx$ by removing all $x_i$ with $i \in Z(\bn)$, and the tangency data for such $i$ are given by $m_{ik} = \delta_{k,j_i}$.

By Lemma \ref{lemma: transversality}, $\mapsms^{\al}_{d,\bp,\bn}(\bx)$ is Fredholm regular of dimension
\begin{equation} \label{eq: dimension formula}
    \dim \mapsms^{\al}_{d,\bp,\bn}(\bx) = \dim \domms^{\al}_{d,\bp,\bn} + \abs{x_0} - \sum_{i=1}^d \abs{x_i}.
\end{equation}

In our previous work on compactness, we used formal symbols $x_e$ associated to inputs or nodes of weight zero, and these were given index $2$ and action $0$. Morally, these generators can be thought of as constant orbits equipped with constant caps. In \eqref{eq: dimension formula} and afterwards however, we give the analogous generators $x^{D_j}$ their $SH$-index and $SH$-action, which are
\begin{equation}
    \abs{x^{D_j}} = 2-\lambda_j\quad\text{and}\quad A(x^{D_j}) = -\kappa\lambda_j.
\end{equation}
Morally, this corresponds to equipping these constant orbits with their fractional cap $u_{in}$.

Each isolated element $\bu = (r,u) \in \mapsms^{\al}_{d,\bp,\bn}(\bx)$ determines a linear isomorphism
\begin{equation}\label{eq: u contribution}
    \abs{o_{\mathbf{u}}}: t^{i_1}\abs{o_{x_1}}\otimes\dots\otimes t^{i_d}\abs{o_{x_d}} \rightarrow \abs{o_{x_0}}\ [3-2d],
\end{equation}
where $i_j = \abs{\bp^{-1}(j)}$ for each $j\in\{1,\dots, d\}$ (cf. \cite[Section 6]{ES1}). 
We remark that for the formal generators $x^{D_j}$, we define $\abs{o_{x^{D_j}}}_\bk \defeq \bk$ to be a trivial line.

By Lemma \ref{lemma: no D-orbits}, Lemma \ref{lemma: no sphere bubbles}, and Remark \ref{remark: compactness for $0$-dimensional moduli spaces}, the moduli spaces $\mapsms^{\al}_{d,\bp,\bn}(\bx)$ are compact in dimension $0$, provided that
\begin{equation}
    \mathscr{P}(x_j)>0 \quad \text{ for all }j\in Z^c(\bn).
\end{equation}

Therefore, we can add up the contributions of \eqref{eq: u contribution} to produce maps
\begin{equation}
    \tilde{\ell}^d_0: \mathfrak{g}_{>0}^{\otimes d}\rightarrow \bigoplus_{n=1}^{\infty} \mathscr{P}_{>0} CF^*(X_{\sigma},H_n),
\end{equation}
where for any real $a \in \mathbb{R}$,
\begin{equation}\label{eq: partial L-infty algebra}
    \mathfrak{g}_{>a} = \mathscr{P}_{>a} SC^*(X_{\sigma})\oplus \bigoplus_{j=1}^N \bk[t] \cdot \abs{o_{x^{D_j}}}_{\bk}.
\end{equation}

\begin{remark}\label{rmk:sphere_bubble_output}
    Recall that the moduli space is undefined in the case that all inputs have weight $0$ and no sprinkle. 
    In this case, we define the output to be $0$:
    $$\tilde \ell^d\left(x^{D_{j_1}},\ldots,x^{D_{j_d}}\right) = 0.$$
    Morally, this is the correct definition because these generators correspond to constant orbits, on which the loop rotation action is trivial and hence the $L_\infty$ operations vanish; but we will see the precise justification in Lemma \ref{lem:L infinity relations} below.
\end{remark}
    
These maps have unique $\partial_t$-linear extensions
\begin{equation}
    \tilde{\ell}^{d}:\mathfrak{g}_{>0}^{\otimes d}\rightarrow \mathfrak{g}_{>0}.
\end{equation}
See \cite{ES1} for more details on the construction of $\tilde{\ell}^{d}$. 

Finally, we set
\begin{equation}
    \ell^{d}(x_1,\dots,x_d) =
    \begin{cases*}
        \tilde{\ell}^d (x_1,\dots,x_d) & if $d\geq 2$\\
        \tilde{\ell}^1(x_1) + (-1)^{\abs{x_1}}\partial_t x_1 & if $d = 1$.
    \end{cases*}
\end{equation}
We note that the outputs of the operations $\ell^d$ are always linear combinations of $1$-periodic orbits, except when $d=1$.

\begin{lemma}
    \label{lem:L infinity relations}
    The $L_{\infty}$ equations 
\begin{equation}\label{eq: L-infinity equation}
    \sum_{j=1}^d\sum_{\sigma\in \Unsh(d,j)} (-1)^{\epsilon(\sigma;x_1,\ldots,x_d)}\ell^{d-j+1}(\ell^{j}(x_{\sigma(1)},\dots,x_{\sigma(j)}),x_{\sigma(j+1)},\dots,x_{\sigma(d)}) = 0
\end{equation}
hold whenever  $x_1,\dots,x_d \in \mathfrak{g}_{>1}$. 
Here $\Unsh(d,j)$ is the subgroup of $\mathfrak{S}_d$ of permutations of $\{1,\dots,d\}$ which satisfy
\begin{equation}
    \sigma(1)<\dots<\sigma(j)\quad\text{and}\quad \sigma(j+1)<\dots<\sigma(d),
\end{equation}
and
\begin{equation}\label{eq: epsilon sign}
    \epsilon(\sigma;x_1,\dots,x_d) = \sum_{\substack{i<j \\ \sigma(i)>\sigma(j)}} \abs{x_i} \cdot \abs{x_j}.   
\end{equation} 
\end{lemma}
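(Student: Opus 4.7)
The plan is to derive the $L_\infty$ relations on $\mathfrak{g}_{>1}$ by analyzing the codimension-$1$ boundary of the $1$-dimensional components of the moduli spaces $\mapsms^{\al}_{d,\bp,\bn}(\bx)$, following the standard Floer-theoretic strategy of \cite{ES1}. The essential new features are the presence of the formal generators $x^{D_j}$, which correspond to weight-zero marked points carrying sprinkles, and the fact that the input hypothesis $\mathscr{P}(x_i)>1$ (rather than just $>0$) is precisely what makes the compactness arguments applicable in dimension one.

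First I would fix the inputs $x_1,\dots,x_d \in \mathfrak{g}_{>1}$ and a candidate output $x_0$, and look at the $1$-dimensional components of $\mapsms^{\al}_{d,\bp,\bn}(\bx)$ for each data $(\bp,\bn)$ compatible with $\bx$. By Lemma \ref{lemma: transversality} these are Fredholm regular, and by Lemmas \ref{lemma: no D-orbits} and \ref{lemma: no sphere bubbles} their Gromov compactifications contain no codimension-$1$ strata featuring $D$-orbit breakings or sphere bubbles. The remaining codimension-$1$ strata are two-component nodal degenerations joined at an $SH$-orbit $x_e$ with $\mathscr{P}(x_e)\geq 0$; the action-index bookkeeping of the proof of Lemma \ref{lemma: no D-orbits}, combined with the strict input bound $\mathscr{P}(x_i)>1$, in fact forces $\mathscr{P}(x_e)>0$, so $x_e\in \mathfrak{g}_{>0}$ and the composition $\ell^{d-j+1}(\ell^{j}(\dots),\dots)$ appearing in \eqref{eq: L-infinity equation} is a well-defined element of $\mathfrak{g}$.

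Second I would identify each codimension-$1$ boundary stratum bijectively with one term of the $L_\infty$ relation. A breaking along a positively-weighted cylindrical end produces exactly a term of the form $(-1)^{\epsilon(\sigma;x_1,\dots,x_d)}\,\ell^{d-j+1}\!\left(\ell^{j}\!\left(x_{\sigma(1)},\dots,x_{\sigma(j)}\right),x_{\sigma(j+1)},\dots,x_{\sigma(d)}\right)$, with the Koszul sign dictated by the orientation line formalism of \cite{ES1}. Degenerations across a weight-zero node fall into two cases: if the bubble carries any positively-weighted input, the configuration is ruled out by Lemma \ref{lemma: no sphere bubbles}; if the bubble carries only sprinkle-free $x^{D_j}$ generators, the convention of Remark \ref{rmk:sphere_bubble_output} forces the contribution to vanish, consistent with the codimension-$2$ appearance of such sphere bubbles flagged in the introduction. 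The formal $\partial_t$ piece of $\ell^1$, together with the continuation-map piece $c-\mathrm{id}$ of $d^{SC}$, come from the standard boundary analysis of the telescope complex, and are inherited verbatim from \cite{ES1}.

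The main obstacle will be the combinatorial and sign bookkeeping, particularly the interplay with the forgetful structure of the moduli: configurations for which forgetting all forgettable marked points yields a Floer cylinder must be treated as a separate case throughout, and the degenerations involving sprinkle-free $x^{D_j}$ inputs must be shown either to cancel in signed pairs or to vanish by Remark \ref{rmk:sphere_bubble_output}. Once the signed boundary-stratum count is identified with the LHS of \eqref{eq: L-infinity equation}, vanishing of the signed boundary count of a compact oriented $1$-manifold delivers the claimed relation.
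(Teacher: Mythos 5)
Your proposal follows essentially the same route as the paper's (very terse) proof: analyze codimension-$1$ boundary strata of $1$-dimensional moduli spaces, invoke Lemmas \ref{lemma: no D-orbits} and \ref{lemma: no sphere bubbles} for compactness, and observe that the sphere-bubble stratum with only sprinkle-free weight-zero inputs is precisely what justifies the vanishing convention of Remark \ref{rmk:sphere_bubble_output}. Your additional observation that $\mathscr{P}(x_i)>1$ forces the intermediate orbit into $\mathfrak{g}_{>0}$ via Lemma \ref{lem:elld P} is the correct reason the compositions in \eqref{eq: L-infinity equation} are well-defined; one small imprecision is that a weight-zero node cannot in fact carry a positively-weighted input beneath it (by the weight convention), so that sub-case is vacuous rather than something Lemma \ref{lemma: no sphere bubbles} needs to rule out.
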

\begin{proof}
    The argument is as in \cite{ES1}, using the compactness results of Lemma \ref{lemma: no D-orbits} and Lemma \ref{lemma: no sphere bubbles}. 
The key novelty, compared with the argument in \emph{op. cit.}, concerns boundary components where a sphere bubble carrying several marked points of weight zero and no sprinkle develops. 
Such boundary components do not appear in the boundary of a $1$-dimensional moduli space by Lemma \ref{lemma: no sphere bubbles}; this justifies the definition given in Remark \ref{rmk:sphere_bubble_output}, as it makes the corresponding terms in \eqref{eq: L-infinity equation} vanish. 
\end{proof}

We emphasize that $(\mathfrak{g}_{>1}, (\ell^d)_{d\geq 1})$ is not an $L_{\infty}$ algebra because it is not preserved by $\ell^1$. Instead, we have $\ell^1(\mathfrak{g}_{>1}) \subseteq \mathfrak{g}_{>0}$.
\begin{lemma}\label{lem:elld P}
    The operations $\ell^d$ satisfy the following property
    \begin{equation}
        \mathscr{P}(\ell^d(x_1,\dots,x_d)) \geq \sum_{i=1}^d \mathscr{P}(x_i) + 2d-3.
    \end{equation}
\end{lemma}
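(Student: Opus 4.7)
The plan is to bound the $\mathscr{P}$-discrepancy one contribution at a time. Nontrivial contributions to $\ell^d(x_1,\dots,x_d)$ arise either from $\tilde\ell^d$, which counts isolated elements of moduli spaces $\mapsms^{\al}_{d,\bp,\bn}(\bx)$, or, when $d=1$, from the formal term $(-1)^{\abs{x_1}}\partial_t$ appearing in the definition of $\ell^1$. The inequality $\mathscr{P}(\ell^d(\ldots)) \geq \cdots$ is interpreted as a filtration statement: every orbit appearing in the output has $\mathscr{P}$-value at least the claimed bound.

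For a contribution from a $0$-dimensional moduli space $\mapsms^{\al}_{d,\bp,\bn}(\bx)$ with output $x_0$, I would combine three relations. The Fredholm regularity formula \eqref{eq: dimension formula} rewrites as
$$\abs{x_0} - \sum_{i=1}^d \abs{x_i} = -\dim \domms^{\al}_{d,\bp,\bn},$$
using the SH-conventions $\abs{x^{D_j}} = 2-\lambda_j$ for the formal divisor symbols. Matching of Hamiltonians at the cylindrical ends gives weight conservation
$$n_{x_0} - \sum_{i=1}^d n_{x_i} = \abs{F},$$
since $n_{x^{D_j}} = 0$. The action inequality from the curvature bound $R(\mathcal{K}_r) \leq C_{\bn} d\gamma_r$, together with $A(x^{D_j}) = -\kappa\lambda_j$, translates into
$$A(x_0) - \sum_{i=1}^d A(x_i) \geq C_{\bn}\abs{F} + \kappa \sum_{i \in Z(\bn)} \lambda_{j_i}.$$
Substituting all three into $\mathscr{P}(x) = C n_x + \kappa^{-1}A(x) - \abs{x}$ and using the moduli-space dimension formula $\dim \domms^{\al}_{d,\bp,\bn} = 2d-3+\abs{F}$ yields
$$\mathscr{P}(x_0) - \sum_{i=1}^d \mathscr{P}(x_i) \geq (1 + C + \kappa^{-1}C_{\bn})\abs{F} + \sum_{i \in Z(\bn)} \lambda_{j_i} + (2d-3) \geq 2d-3,$$
thanks to the slope assumption \eqref{eq: C>2} (which implies $1+C+\kappa^{-1}C_{\bn} > 3 > 0$) and the positivity of the $\lambda_j$.

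The edge cases are handled separately. When $d=1$, $F = \emptyset$, $Z(\bn) = \emptyset$, the relevant moduli space consists of Floer cylinders: here the domain dimension formula degenerates, but the standard Floer energy identity $A(x_0) \geq A(x_1)$ and index formula $\abs{x_0}-\abs{x_1} = 1$ give $\mathscr{P}(x_0)-\mathscr{P}(x_1) \geq -1 = 2\cdot 1 -3$ directly. When $d=1$, $F=\emptyset$, $Z(\bn) \neq \emptyset$, all inputs are formal symbols without sprinkles, so the operation vanishes by Remark \ref{rmk:sphere_bubble_output} and the bound is trivial. Finally, for the $(-1)^{\abs{x_1}}\partial_t$ term in $\ell^1$, a direct computation gives $\mathscr{P}(\partial_t(t\gamma)) = \mathscr{P}(\gamma) = \mathscr{P}(t\gamma)-1$, matching the required bound exactly.

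The main obstacle is careful bookkeeping: one must set up the SH-conventions $\abs{x^{D_j}}=2-\lambda_j$, $A(x^{D_j})=-\kappa\lambda_j$ for the formal divisor symbols consistently across the dimension, action, and weight equations, and verify the uniform dimension formula $\dim\domms^{\al}_{d,\bp,\bn}=2d-3+\abs{F}$. Once this is done, the inequality reduces to elementary algebra combined with the slope assumption \eqref{eq: C>2} on the Hamiltonians.
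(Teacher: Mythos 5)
Your proof is correct and follows essentially the same route as the paper: combine the Fredholm-index formula (for a $0$-dimensional component, $\dim\domms^{\al}_{d,\bp,\bn}+\abs{x_0}-\sum_i\abs{x_i}=0$, with $\dim\domms^{\al}_{d,\bp,\bn}=2d-3+\abs{F}$), the weight-conservation identity $n_{x_0}-\sum_i n_{x_i}=\abs{F}$, and the action inequality coming from the curvature bound, then assemble $\mathscr{P}$ and invoke the positivity of $1+C+\kappa^{-1}C_{\bn}$. The paper's own proof is a four-line version of exactly this argument; it drops the (nonnegative, hence harmless) extra term $\sum_{i\in Z(\bn)}\lambda_{j_i}$ that you keep, and does not separate out the $d=1$ edge cases (Floer cylinder, the formal $\partial_t$ term, the all-formal-input case) since these are all consistent with the uniform dimension/action bookkeeping once the SH-conventions $\abs{x^{D_j}}=2-\lambda_j$, $A(x^{D_j})=-\kappa\lambda_j$ are in place. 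Your edge-case discussion is therefore not an error, just a more explicit unpacking of the same computation.
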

\begin{proof}
    Indeed, if a $0$-dimensional moduli space $\mapsms^{\al}_{d,\bp,\bn}(\bx)$ is non-empty, we must have
    \begin{align*}
        2d - 3 + \abs{F} + \abs{x_0} &= \sum_{i=1}^d \abs{x_i}\\
        -(C+\kappa^{-1}C_\bn)\abs{F} + Cn_0 +\kappa^{-1}A(x_0)&\geq \sum_{i=1}^d Cn_i + \kappa^{-1}A(x_i).
    \end{align*}
We subtract the first equation from the second inequality. The result is
\begin{equation}\label{eq: F-inequality}
    \mathscr{P}(x_0) \geq \sum_{i=1}^d \mathscr{P}(x_i) + 2d-3 + (1+C+\kappa^{-1}C_{\bn})\abs{F}.
\end{equation}
The lemma follows from the fact that $1+C+\kappa^{-1}C_{\bn}>0$.
\end{proof}

The operations $(\ell^d)$ are closely related to the $L_{\infty}$ structure on symplectic cohomology
\begin{equation}
    \ell_{SC}^{d}: SC^*(X_{\sigma})^{\otimes d} \rightarrow SC^*(X_{\sigma}) [3-2d]
\end{equation}
which we constructed in \cite{ES1}:

\begin{lemma}\label{lem:SC agree}
    Suppose that we have inputs $x_1,\dots,x_d \in \mathscr{P}_{>0} SC^*(X_{\sigma})$. Then,
    \begin{equation}
        \ell_{SC}^{d}(x_1,\dots,x_d) = 
        \begin{cases}
         \ell^d(x_1,\dots,x_d) \quad &\text{if } d\geq 2\\
         \ell^1(x_1) \quad &\text{if } d=1 \text{ and } \mathscr{P}(x_1)>1.
        \end{cases}
    \end{equation}
\end{lemma}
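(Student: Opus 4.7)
The plan is to show that, under the hypothesis that all inputs lie in $\mathscr{P}_{>0}SC^*(X_\sigma)$, the moduli spaces defining $\ell^d$ coincide on the nose with the moduli spaces defining $\ell^d_{SC}$ from \cite{ES1}, after arranging compatible choices of perturbation data. Since all inputs are $SH$-orbits, the only triples $(d,\bp,\bn)$ contributing to $\ell^d(x_1,\dots,x_d)$ are those in which \emph{every} marked point $z_i$, $i\geq 1$, has positive weight $n_i>0$. In particular $Z(\bn)=\emptyset$, there are no formal generators $x^{D_j}$ among the inputs, no sprinkle data is present at weight-zero inputs, and no tangency vector $\mathbf{m}$ is needed. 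The moduli space $\mapsms^{\al}_{d,\bp,\bn}(\bx)$ then consists of solutions $u:\Sigma_r\to M$ to \eqref{pseudo-holomorphic equation} satisfying the constraint $[u]\cdot D=0$.

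The first step is to observe that such solutions are in fact contained in $X_\sigma$. Since $J_r\in\mathscr{J}$ respects the divisor $D$, positivity of intersection ensures that $[u]\cdot D=0$ forces $u$ to be entirely disjoint from $D$; the integrated maximum principle (Lemma \ref{integrated maximum principle}) then places $u$ inside $X_\sigma$. Consequently, the moduli space $\mapsms^{\al}_{d,\bp,\bn}(\bx)$ agrees, set-theoretically and with identical orientation data, with the corresponding moduli space used in \cite{ES1} to define $\ell^d_{SC}$, provided the perturbation data coincide. This is the second step: the perturbation data $\mathscr{F}_r$ chosen here, when restricted to curves entirely contained in $X_\sigma$, are exactly of the kind used in \emph{op. cit.} to define $\ell^d_{SC}$, so we may assume the two sets of choices are identical on the relevant subspace.

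The third step is to check that the output is again supported on $SH$-orbits, so that the equality of moduli spaces translates to an equality of the chain-level operations. For $d\geq 2$, Lemma \ref{lem:elld P} gives
\begin{equation*}
\mathscr{P}(\ell^d(x_1,\dots,x_d))\geq \sum_{i=1}^d \mathscr{P}(x_i)+2d-3 > 0,
\end{equation*}
and by Lemma \ref{lemma: D-orbits are P-negative} the output has no $D$-orbit components. For $d=1$ with $\mathscr{P}(x_1)>1$, the same inequality yields $\mathscr{P}(\tilde{\ell}^1(x_1))\geq \mathscr{P}(x_1)-1 > 0$, so the $\tilde{\ell}^1$-contribution is again an $SH$-orbit term, and the extra summand $(-1)^{|x_1|}\partial_t x_1$ is the telescope differential which is present identically in the definition of $\ell^1_{SC}$.

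I expect the main (mild) obstacle is the coordination of perturbation data: the data in \cite{ES1} are chosen on moduli spaces without the extra weight-zero/sprinkle structure, while here they are chosen on the enlarged moduli spaces $\domms^{\al}_{d,\bp,\bn}$. This is resolved by building the data of \cite{ES1} first and then extending to the moduli spaces with weight-zero marked points using the \textbf{(Consistency)}, \textbf{(Forgetfulness)}, and \textbf{(Equivariance)} conditions; this extension is possible because the construction in Lemma \ref{lemma: transversality} is inductive and accommodates any prescribed data at the minimal strata. With this arrangement the two definitions produce identical signed counts, completing the proof.
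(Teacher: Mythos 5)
Your proof is correct and takes essentially the same approach as the paper's: both hinge on the observation that, when all inputs lie in $\mathscr{P}_{>0}SC^*(X_\sigma)$, the relevant moduli spaces (which satisfy $[u]\cdot D = 0$) are forced by positivity of intersection and the integrated maximum principle (Lemma~\ref{integrated maximum principle}) to coincide with those of \cite{ES1}, together with an action estimate to rule out outputs with $\mathscr{P}(x_0)\le 0$. The paper organizes the argument as a case split on the output $x_0$ and appeals to "the analogue of Lemma~\ref{lem:elld P} for $\ell^d_{SC}$" in the second case, whereas you deduce the same output constraint from Lemma~\ref{lem:elld P} itself via the moduli-space identification; you are also somewhat more explicit about positivity of intersection and the coordination of perturbation data, both of which the paper's three-line proof leaves implicit.
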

\begin{proof}
    If $x_0 \in \mathscr{P}_{>0} SC^*(X_\sigma)$, then we have
    $$\langle \ell^d(x_1,\ldots,x_d),x_0\rangle = \langle \ell_{SC}^d(x_1,\ldots,x_d),x_0\rangle$$
    as a direct consequence of the integrated maximum principle of Lemma \ref{integrated maximum principle}. 
    If $x_0 \notin \mathscr{P}_{>0} SC^*(X_\sigma)$, then $\langle \ell^d(x_1,\ldots,x_d),x_0\rangle = 0$ by definition. 
    On the other hand, if $\langle \ell_{SC}^d(x_1,\ldots,x_d),x_0\rangle \neq 0$ and either $d \ge 2$ or $d=1$ and $\mathscr{P}(x_1)>1$, then $\mathscr{P}(x_0) > 0$ by the analogue of Lemma \ref{lem:elld P} for $\ell^d_{SC}$. 
\end{proof}

\subsection{Maurer--Cartan theory}

Let $(R,Q_{\geq \bullet})$ be a graded filtered commutative $\bk$-algebra. This means that $R$ has a decomposition into pure degree components
\begin{equation}
    R = \bigoplus_{a \in \mathbb{Q}} R_a,\quad R_a\cdot R_b \subseteq R_{a+b}.
\end{equation}
Moreover, we have a decreasing collection of $\bk$-subspaces $Q_{\geq p}R \subseteq R$ for each $p\in \mathbb{Z}$ such that
\begin{equation}
    Q_{\geq p_1}R \cdot  Q_{\geq p_2}R \subseteq Q_{\geq p_1 + p_2}R.
\end{equation}
We assume that the $Q$-filtration is exhaustive and bounded below.

For each $\bk$-vector space $V$, the tensor product $V\otimes_{\bk} R$ inherits a filtration
\begin{equation}
    Q_{\geq p} (V\otimes_{\bk} R)\defeq  V\otimes_{\bk} Q_{\geq p} R.
\end{equation}
The completion with respect to this filtration is denoted $\overline{V\otimes_{\bk} R}^Q$.

If $V$ is a graded vector space over $\bk$, we define the degreewise completed tensor product as the graded $R$-module
\begin{equation}
    V\wotimes R = \bigoplus_{a \in \mathbb{Q}} \overline{(V\otimes_{\bk} R)_a}^Q.
\end{equation}

\begin{example}
    Let $R = \bk[q]$ be a polynomial ring such that $\abs{q}=1$, and let $V$ be a $\mathbb{Z}$-graded vector space. Then,
    \begin{equation}
        (V\wotimes R)_i = \left\{ \sum_{k= -i}^{\infty}  v_{-k}q^{i+k} \ \bigg| \  v_{-k} \in V_{-k} \right\}.
    \end{equation}
    In particular, if $V$ is concentrated in degrees above some constant $c\in \mathbb{Z}$, we simply have $V\wotimes R  = V\otimes R$.
\end{example}

Let $(\mathfrak{g},\ell^*)$ be an $L_{\infty}$ algebra over $\bk$. We linearly extend the $L_{\infty}$ operations on $\mathfrak{g}$ to obtain an $L_{\infty}$ structure on $\mathfrak{g}^R = \mathfrak{g}\wotimes R$. Define the Maurer--Cartan set
\begin{equation}
    MC(\mathfrak{g},R) = \left\{\alpha \in Q_{\geq 1}\mathfrak{g}^R \ | \ \deg(\alpha)=2\quad\text{and}\quad \mathcal{F}(\alpha) = 0\right\},
\end{equation}
where $\mathcal{F}(\alpha) \in \mathfrak{g}^R$ is the curvature term
\begin{equation}
    \mathcal{F}(\alpha) = \sum_{d=1}^{\infty} \frac{1}{d!} \ell^d(\alpha,\dots,\alpha).
\end{equation}

Elements $\alpha\in MC(\mathfrak{g},R)$ are used to define deformed $L_{\infty}$ operations,
\begin{equation}
    \ell_{\alpha}^d(x_1,\dots,x_d) = \sum_{k=0}^{\infty} \frac{1}{k!}\ell^{k+d}(\alpha^{\otimes k},x_1,\dots,x_d).
\end{equation}
It is straightforward to check that $(\ell^d_{\alpha})$ satisfy the $L_{\infty}$ equations on $\mathfrak{g}^R$ (see \cite[Proposition 4.4]{Getzler-Lie-theory}).

Consider the $L_{\infty}$ algebra given by
\begin{equation}
   \mathfrak{g}_{\tau} = \mathfrak{g}\otimes_{\bk}  \bk[\tau,d\tau],
\end{equation}
where $\bk[\tau,d\tau]$ is a graded commutative dg-algebra with generators
\begin{equation}
    \deg(\tau) = 0,\quad \deg(d\tau)=1,\quad\text{and}\quad d(\tau) = d\tau.
\end{equation}
The $L_{\infty}$ operations on $\mathfrak{g}_{\tau}$ are given by
\begin{equation}
    \underline{\ell}^d(x_1\otimes\theta_1,\dots,x_d\otimes\theta_d) =
    \begin{cases}
        (-1)^{\dagger}\ell^d(x_1,\dots,x_d)\otimes(\theta_1\dotsi\theta_d)\quad & \text{ if }  d\geq 2,  \\
        \ell^1(x_1)\otimes \theta_1 + (-1)^{\abs{x_1}}x_1\otimes d\theta_1 \quad & \text{ if }  d=1,
    \end{cases}
\end{equation}
where the missing sign is $\dagger = \sum_{k<l} \abs{x_k}\abs{\theta_l}$. The $L_{\infty}$ equations for $(\underline{\ell}^*)$ follow from a straightforward computation using the $L_{\infty}$ equations for $(\ell^*)$, together with the following sign identity.
\begin{lemma}
    With each $\boldsymbol{y} = (x_1\otimes\theta_1,\dots,x_d\otimes\theta_d)$, we associate the sign
    \begin{equation}
        h(\boldsymbol{y}) = \sum_{k<l} \abs{x_k}\abs{\theta_l} \quad \mod 2.
    \end{equation}
    It is related to the Koszul signs defined in \eqref{eq: epsilon sign} by
    \begin{equation}
        \epsilon(\sigma;\boldsymbol{y}) - \epsilon(\sigma;\bx)-\epsilon(\sigma;\boldsymbol{\theta}) = h(\sigma\cdot \boldsymbol{y}) - h(\boldsymbol{y}) \quad \mod 2,
    \end{equation}
    where $\bx = (x_1,\dots,x_d)$, and $\boldsymbol{\theta} = (\theta_1,\dots,\theta_d)$.
\end{lemma}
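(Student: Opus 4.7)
The plan is a direct bookkeeping argument: expand both sides using the relation $|x_k \otimes \theta_k| \equiv |x_k| + |\theta_k| \pmod 2$, and then reduce them to the same combinatorial sum indexed by inversions of $\sigma$.

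First, I would expand $\epsilon(\sigma; \boldsymbol{y})$ via bilinearity of the product modulo~$2$. From the definition $\epsilon(\sigma; \boldsymbol{y}) = \sum_{i<j,\,\sigma(i)>\sigma(j)} |x_i \otimes \theta_i|\cdot|x_j \otimes \theta_j|$, the mixed cross-terms separate cleanly to give
$$\epsilon(\sigma; \boldsymbol{y}) - \epsilon(\sigma; \bx) - \epsilon(\sigma; \boldsymbol{\theta}) \equiv \sum_{i<j,\,\sigma(i)>\sigma(j)} \bigl(|x_i||\theta_j| + |\theta_i||x_j|\bigr) \pmod 2.$$
So the identity reduces to showing that $h(\sigma \cdot \boldsymbol{y}) - h(\boldsymbol{y})$ is congruent to the same sum modulo~$2$.

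Next, using the paper's convention for the action of $\sigma$ on tuples (consistent with its definition of $\epsilon$: position $k$ of $\sigma \cdot \boldsymbol{y}$ is $y_{\sigma^{-1}(k)}$), I would write
$$h(\sigma \cdot \boldsymbol{y}) = \sum_{k<l} |x_{\sigma^{-1}(k)}|\cdot|\theta_{\sigma^{-1}(l)}| = \sum_{(i,j):\,\sigma(i)<\sigma(j)} |x_i|\cdot|\theta_j|,$$
after substituting $i = \sigma^{-1}(k)$, $j = \sigma^{-1}(l)$; note that the sum on the right runs over all ordered pairs $(i,j)$ with $i \neq j$ satisfying $\sigma(i)<\sigma(j)$, not just those with $i<j$.

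The key step is then to split this sum according to the sign of $j-i$. The contribution from pairs with $i<j$ is $\sum_{i<j,\,\sigma(i)<\sigma(j)} |x_i||\theta_j|$; for the contribution from pairs with $i>j$, one relabels $(i,j) \leftrightarrow (j,i)$ to obtain $\sum_{i<j,\,\sigma(i)>\sigma(j)} |x_j||\theta_i|$. Subtracting $h(\boldsymbol{y}) = \sum_{i<j}|x_i||\theta_j|$ kills the first piece together with the non-inversion part of $h(\boldsymbol{y})$, leaving
$$h(\sigma \cdot \boldsymbol{y}) - h(\boldsymbol{y}) = \sum_{i<j,\,\sigma(i)>\sigma(j)} \bigl(|x_j||\theta_i| - |x_i||\theta_j|\bigr),$$
which agrees modulo~$2$ with the expression obtained above.

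This is a purely combinatorial check, and the only subtlety is ensuring the reindexing convention for the action of $\sigma$ matches the one implicit in the paper's Koszul sign formula; there is no real obstacle, just careful tracking of which index runs over inversions of $\sigma$ versus $\sigma^{-1}$.
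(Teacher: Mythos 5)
Your proof is correct, and it takes a genuinely different route from the paper's. You expand both sides directly as sums over inversions of $\sigma$: the left-hand side reduces by bilinearity to the cross terms $\sum_{i<j,\ \sigma(i)>\sigma(j)}\bigl(|x_i||\theta_j| + |\theta_i||x_j|\bigr)$, and for the right-hand side you reindex $h(\sigma\cdot\boldsymbol{y})$ via $(i,j)=(\sigma^{-1}(k),\sigma^{-1}(l))$, split the sum over ordered pairs into $i<j$ and $i>j$, relabel, and subtract $h(\boldsymbol{y})$. The paper instead argues abstractly: both sides satisfy the cocycle relation $\psi(\sigma_1\sigma_2;\bx)\equiv\psi(\sigma_1;\sigma_2\bx)+\psi(\sigma_2;\bx)\pmod 2$, so the identity need only be verified on the generating adjacent transpositions $\sigma=(i\ i+1)$, for which both sides visibly reduce to $|x_i||\theta_{i+1}|+|x_{i+1}||\theta_i|$. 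The paper's argument is shorter and exhibits a reusable pattern for sign verifications; yours is more self-contained and, usefully, surfaces the convention hiding behind the notation $\sigma\cdot\boldsymbol{y}$. On that last point you are right to flag it, and your choice $(\sigma\cdot\boldsymbol{y})_k = y_{\sigma^{-1}(k)}$ is the correct one: it is what makes $\epsilon(\sigma;\bx)=\sum_{i<j,\ \sigma(i)>\sigma(j)}|x_i||x_j|$ the Koszul sign relating $x_1\cdots x_d$ to $(\sigma\bx)_1\cdots(\sigma\bx)_d$ and what makes the cocycle relation hold in the form the paper uses. With the opposite convention $(\sigma\cdot\boldsymbol{y})_k = y_{\sigma(k)}$, your reindexing would land on inversions of $\sigma^{-1}$ rather than $\sigma$, and the identity as written actually fails (one can check this already for a $3$-cycle). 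The paper's reduction to transpositions, where $\sigma=\sigma^{-1}$, quietly sidesteps this ambiguity, so your explicit tracking of the convention is a worthwhile supplement.
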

\begin{proof}
    Observe that both sides of the equation satisfy the equation
    \begin{equation}
        \psi(\sigma_1\sigma_2;\bx) \equiv \psi(\sigma_1;\sigma_2\bx)+\psi(\sigma_2;\bx) \quad \mod 2.
    \end{equation}
    Therefore, it suffices to check that they agree for transpositions of the type $\sigma = (i\ i+1)$. In this case, both sides reduce to $\abs{x_i}\abs{\theta_{i+1}} + \abs{x_{i+1}}\abs{\theta_{i}}$.
\end{proof}

The $L_{\infty}$ algebra $(\mathfrak{g}_{\tau},\underline{\ell}^d)$ comes with evaluation maps for any $c\in \bk$,
\begin{equation}
    ev_{c}:\mathfrak{g}_{\tau} \rightarrow \mathfrak{g}: \quad\quad ev_c(\tau) = c \ \text{  and  }\ ev_c(d\tau) = 0,
\end{equation}
which are clearly $L_{\infty}$ homomorphisms, and in fact quasi-isomorphisms, see Lemma \ref{lemma: qis} below.

Two Maurer--Cartan elements $\alpha_0,\alpha_1\in MC(\mathfrak{g},R)$ are said to be gauge-equivalent if there is a Maurer--Cartan element $\alpha_\tau \in MC(\mathfrak{g}_{\tau},R)$ such that $ev_0(\alpha_{\tau}) = \alpha_0$ and $ev_1(\alpha_{\tau}) = \alpha_1$. We think of $\alpha_{\tau}$ as a homotopy between the two Maurer--Cartan elements. With that in mind, the following result can be thought of as a path lifting property for Maurer--Cartan elements.

\begin{lemma}\label{lemma: path extension for MC elements}
    Suppose we have $\alpha_0\in MC(\mathfrak{g},R)$ and $\gamma_{\tau} \in (\mathfrak{g}^R)^1\wotimes\bk[\tau]$. There is a unique $c_{\tau} \in (\mathfrak{g}^{R})^2\wotimes \bk[\tau]$ such that $\alpha_{\tau} = c_{\tau} + \gamma_{\tau}d\tau$ is a Maurer--Cartan element, and $ev_0(\alpha_{\tau}) = \alpha_0$.
\end{lemma}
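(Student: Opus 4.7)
The plan is to expand the Maurer--Cartan equation $\mathcal{F}(\alpha_\tau) = 0$ using the decomposition $\bk[\tau, d\tau] = \bk[\tau] \oplus \bk[\tau]\,d\tau$ and the fact that $(d\tau)^2 = 0$, thereby separating it into two equations which I will then solve inductively in the $\tau$-degree. Since $|c_\tau| = |\gamma_\tau d\tau| = 2$, all Koszul signs from the graded symmetry of $\underline{\ell}^d$ are trivial. The $d\tau$-contribution from $\underline{\ell}^1(c_\tau)$ is $\partial_\tau c_\tau$ (because $d(\tau^n) = n\tau^{n-1}d\tau$), and for $d \geq 2$ only terms with exactly one factor of $\gamma_\tau d\tau$ survive. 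A direct computation then yields
\begin{equation*}
\mathcal{F}(\alpha_\tau) \;=\; \mathcal{F}(c_\tau) \;+\; \bigl(\partial_\tau c_\tau + \ell^1_{c_\tau}(\gamma_\tau)\bigr)\, d\tau.
\end{equation*}
Thus $\alpha_\tau \in MC(\mathfrak{g}_\tau, R)$ is equivalent to the pair of conditions (i) $\mathcal{F}(c_\tau) = 0$ and (ii) $\partial_\tau c_\tau = -\ell^1_{c_\tau}(\gamma_\tau)$.

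Next I would solve the ODE (ii) with initial value $c_0 = \alpha_0$ by a power series ansatz. Expanding $c_\tau = \sum_{n \ge 0} c_n \tau^n$ and $\gamma_\tau = \sum_{n \ge 0} \gamma_n \tau^n$ and extracting the $\tau^n$-coefficient gives the recursion
\begin{equation*}
(n+1)\, c_{n+1} \;=\; -\bigl[\ell^1_{c_\tau}(\gamma_\tau)\bigr]_{\tau^n},
\end{equation*}
whose right-hand side depends only on $c_0, \dots, c_n$ and $\gamma_0, \dots, \gamma_n$. The infinite sum $\ell^1_{c_\tau}(\gamma_\tau) = \sum_{k \ge 0} \tfrac{1}{k!}\,\ell^{k+1}(c_\tau^{\otimes k}, \gamma_\tau)$ converges in the $Q$-filtration because $c_\tau, \gamma_\tau \in Q_{\ge 1}$ force the $k$th summand into $Q_{\ge k+1}$. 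The recursion both determines $c_\tau$ uniquely and produces $c_\tau \in (\mathfrak{g}^R)^2 \wotimes \bk[\tau]$ with $c_\tau \in Q_{\ge 1}$ by induction.

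The main step—and the one requiring the most care—is to verify that condition (i) is then automatic. Set $F(\tau) \defeq \mathcal{F}(c_\tau)$. Differentiating using the Leibniz rule and the graded symmetry of $\ell^d$ (again signless, as $\partial_\tau c_\tau$ has even degree), then substituting (ii), gives
\begin{equation*}
\partial_\tau F(\tau) \;=\; \ell^1_{c_\tau}(\partial_\tau c_\tau) \;=\; -\,\ell^1_{c_\tau}\!\bigl(\ell^1_{c_\tau}(\gamma_\tau)\bigr).
\end{equation*}
Now the operations $\ell^*_{c_\tau}$ obtained by deformation of $\ell^*$ along $c_\tau$ form a \emph{curved} $L_\infty$ algebra with curvature $\ell^0_{c_\tau} = F(\tau)$ (this is precisely \cite[Proposition 4.4]{Getzler-Lie-theory} applied without assuming $c_\tau$ satisfies the MC equation). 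The single-input curved $L_\infty$ relation reads $\ell^1_{c_\tau} \circ \ell^1_{c_\tau} \;=\; -\,\ell^2_{c_\tau}(F(\tau), -)$, so
\begin{equation*}
\partial_\tau F(\tau) \;=\; \ell^2_{c_\tau}\bigl(F(\tau), \gamma_\tau\bigr),
\end{equation*}
which is linear in $F(\tau)$. Combined with the initial value $F(0) = \mathcal{F}(\alpha_0) = 0$, an easy induction on the $\tau$-coefficient of $F(\tau)$ (noting that each $F_n$ is uniquely determined by $F_0, \dots, F_{n-1}$ via this linear ODE) forces $F \equiv 0$. The main obstacle is purely bookkeeping: confirming the clean splitting of $\mathcal{F}(\alpha_\tau)$ and producing the curved $L_\infty$ identity with correct signs; once these formal facts are in place, the existence-and-uniqueness argument is the standard path-lifting for Maurer--Cartan elements.
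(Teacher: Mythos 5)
Your proposal is correct and follows essentially the same strategy as the paper: split the Maurer--Cartan equation for $\alpha_\tau = c_\tau + \gamma_\tau\,d\tau$ into the ODE $\partial_\tau c_\tau = -\ell^1_{c_\tau}(\gamma_\tau)$ and the condition $\mathcal F(c_\tau)=0$, solve the ODE order by order, and then show the second condition holds by observing that $\partial_\tau\mathcal F(c_\tau)$ is linear in $\mathcal F(c_\tau)$ with zero initial value. The only cosmetic difference is that you package the key identity via the curved $L_\infty$ relation $\ell^1_{c_\tau}\circ\ell^1_{c_\tau} = -\ell^2_{c_\tau}(\mathcal F(c_\tau),-)$, whereas the paper reaches the same conclusion by directly rearranging the $L_\infty$ relations inside the expanded sum for $\tfrac{d}{d\tau}\mathcal F(c_\tau)$.
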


\begin{proof}
 The Maurer--Cartan equation for $\alpha_{\tau}$ in $\mathfrak{g}^{R}_{\tau}$ decomposes into
 \begin{align}
    &\frac{d c_{\tau}}{d\tau} = - \sum_{d\geq 0} \frac{1}{d!}\ell^{1+d}(\gamma_{\tau},c_{\tau},\dotsi,c_{\tau}),\quad c_0 = \alpha_0. \label{eq: ODE}\\
    &\sum_{d\geq 1} \frac{1}{d!}\ell^d(c_{\tau},\dotsi,c_{\tau}) = 0.\label{eq: MC equation}
 \end{align}
 In the equations above, we clarify that unlike $\underline{\ell}^1(c_\tau)$, the expression $\ell^1(c_\tau)$ is an extension of $\ell^1$ to $\mathfrak{g}\wotimes \bk[\tau]$ which is linear in $\tau$.

 The ODE \eqref{eq: ODE} can be solved order-by-order in $\tau$, and it remains to check that the solution satisfies \eqref{eq: MC equation}. Indeed,
 \begin{align*}
    \frac{d}{d\tau}\mathcal{F}(c_{\tau})
    &= -\sum_{d,e\geq 0} \frac{1}{d!e!}\ell^{1+d}(\ell^{1+e}(\gamma_{\tau},c_{\tau},\dots,c_{\tau}),c_{\tau},\dots,c_{\tau})\\
    &= \sum_{d,e\geq 0} \ell^{1+d}(\ell^{1+e}(c_{\tau},\dots,c_{\tau}),\gamma_{\tau},c_{\tau},\dots,c_{\tau})\quad\quad(L_{\infty}-\text{equations})\\
    &= \sum_{d\geq 0} \frac{1}{d!}\ell^{1+d}(\mathcal{F}(c_{\tau}),\gamma_{\tau},c_{\tau},\dots,c_{\tau}).
 \end{align*}
 This is a first-order ODE in $\tau$ with initial condition $\mathcal{F}(c_{0}) = 0$. It follows that $\mathcal{F}(c_{\tau})$ vanishes identically as desired.
\end{proof}

\begin{lemma}\label{lemma: qis}
    If $\alpha_0$ and $\alpha_1$ are gauge-equivalent, then the deformed $L_{\infty}$ algebras $(\mathfrak{g},\ell^*_{\alpha_0})$ and $(\mathfrak{g},\ell^*_{\alpha_1})$ are quasi-isomorphic.
\end{lemma}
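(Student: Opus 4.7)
The plan is to realize the required quasi-isomorphism through an intermediate $L_\infty$ algebra, namely the deformation of $\mathfrak{g}_\tau^R = \mathfrak{g}_\tau \wotimes R$ by the gauge-equivalence Maurer--Cartan element $\alpha_\tau \in MC(\mathfrak{g}_\tau, R)$ provided by the hypothesis. For each $c \in \bk$, the evaluation map $ev_c: \mathfrak{g}_\tau^R \to \mathfrak{g}^R$ is a strict $L_\infty$ homomorphism, and since $ev_c(d\tau)=0$ it sends $\alpha_\tau = c_\tau + \gamma_\tau d\tau$ to $\alpha_c \defeq ev_c(c_\tau)$. Hence it descends to a strict $L_\infty$ homomorphism
$$ev_c: (\mathfrak{g}_\tau^R, \underline{\ell}^*_{\alpha_\tau}) \longrightarrow (\mathfrak{g}^R, \ell^*_{\alpha_c}).$$
Granted that $ev_0$ and $ev_1$ are quasi-isomorphisms, the lemma follows by composing $ev_1$ with a homotopy inverse of $ev_0$ to produce the desired $L_\infty$ quasi-isomorphism $(\mathfrak{g},\ell^*_{\alpha_0}) \simeq (\mathfrak{g},\ell^*_{\alpha_1})$.

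The core task is therefore to verify that $ev_c$ is a quasi-isomorphism on the underlying chain complexes equipped with the deformed linear terms $\underline{\ell}^1_{\alpha_\tau}$ and $\ell^1_{\alpha_c}$. I would handle this through a spectral sequence argument associated with the $Q$-filtration. The exhaustive, bounded-below filtration $Q_{\geq \bullet}$ on $R$ induces degreewise-complete filtrations on both $\mathfrak{g}^R$ and $\mathfrak{g}_\tau^R$, which are preserved by $ev_c$. The undeformed operations $\ell^d$ and $\underline{\ell}^d$ are filtered, while the deformation corrections $\tfrac{1}{k!}\ell^{k+1}(\alpha_c^{\otimes k}, -)$ with $k \geq 1$ strictly raise the $Q$-filtration by at least $k$, since $\alpha_c \in Q_{\geq 1}$. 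Consequently the differentials on the associated graded coincide with the undeformed ones.

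On the $E_0$ page the map induced by $ev_c$ becomes $ev_c \otimes \id: \mathfrak{g}_\tau \otimes \gr_Q R \to \mathfrak{g} \otimes \gr_Q R$, now with undeformed differentials. The explicit contraction $h(p(\tau) + q(\tau)d\tau) = \int_c^\tau q(s)\,ds$ shows that $ev_c: \bk[\tau,d\tau] \to \bk$ is a quasi-isomorphism of dg $\bk$-algebras (a Poincar\'e lemma in one variable), and tensoring with $\mathfrak{g}$ and then with $\gr_Q R$ preserves this property because we work over a field. A standard comparison theorem for spectral sequences of complete, exhaustively filtered complexes then upgrades this to the statement that $ev_c$ is itself a quasi-isomorphism on the deformed complexes, completing the proof.

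The main obstacle I anticipate is the bookkeeping around the degreewise completion $\wotimes$ rather than the plain tensor product, and in particular the verification of convergence of the spectral sequence within each fixed total degree. Within any such degree $a$, the filtration $Q_{\geq \bullet}(\mathfrak{g}^R)_a$ is Hausdorff and complete by the very definition of $\wotimes$ and bounded below by hypothesis on $R$, so the classical comparison theorem applies; but one does need to resist the temptation to work naively with the uncompleted tensor product, for which the spectral sequence would fail to converge as soon as $\mathfrak{g}$ contains elements of arbitrarily negative degree.
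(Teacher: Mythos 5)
Your proposal matches the paper's own proof: both reduce to showing the evaluation maps $ev_c : (\mathfrak{g}^R_\tau, \underline{\ell}^1_{\alpha_\tau}) \to (\mathfrak{g}^R, \ell^1_{\alpha_c})$ are quasi-isomorphisms, both filter by the $Q$-filtration so that the deformation terms vanish on the associated graded, both identify the induced map as $\mathrm{Gr}(\mathfrak{g}^R)\otimes\bk[\tau,d\tau]\to\mathrm{Gr}(\mathfrak{g}^R)\otimes\bk$ (a quasi-isomorphism by the one-variable Poincar\'e lemma), and both invoke the classical spectral-sequence comparison and convergence theorems. Your remark on the necessity of degreewise completeness for convergence is a worthwhile explicit observation that the paper subsumes under the citation to Weibel, but it is not a departure from the argument.
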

\begin{proof}
    It suffices to show that for any $c\in \bk$, the evaluation map
    \begin{equation}\label{eq: evaluation is qis}
        ev_c: (\mathfrak{g}^R_{\tau},\underline{\ell}^1_{\alpha_{\tau}})\rightarrow (\mathfrak{g}^R,\ell^1_{\alpha_c})
    \end{equation} 
    is a quasi-isomorphism. Indeed, the chain complexes in \eqref{eq: evaluation is qis} come with exhaustive and bounded below $Q$-filtrations which are respected by the map $ev_c$. Because $\alpha_{\tau}\in Q_{\geq 1}\mathfrak{g}^R_{\tau}$, the induced map at the $E_1$-page is
    \begin{equation}
        (Gr(\mathfrak{g}^R),\ell^1)\otimes (\bk[\tau,d\tau],d) \xrightarrow[]{\id\otimes ev_c}(Gr(\mathfrak{g}^R),\ell^1)\otimes(\bk,0),
    \end{equation}
    where $Gr(\mathfrak{g}^R)$ is the associated graded of $\mathfrak{g}^R$ with respect to the $Q$-filtration. The latter is a quasi-isomorphism because $ev_c:(\bk[\tau,d\tau],d)\rightarrow (\bk,0)$ is a quasi-isomorphism. By classical spectral sequence convergence  and comparison theorems (see \cite[Theorems 5.2.12 and 5.5.1]{Weibel}), we deduce that \eqref{eq: evaluation is qis} is a quasi-isomorphism.
\end{proof}

\subsection{Maurer--Cartan construction}

We now carry out the constructions of the previous section in our geometric setup. The (partial) $L_{\infty}$ algebra of interest is $\mathfrak{g}_{>0}$ as defined in \eqref{eq: partial L-infty algebra}, and we study $L_{\infty}$ deformations over the filtered $\bk$-algebra from the Introduction:
\begin{align*}\label{eq: ring R+}
    &R^+ = \bk[q_1,\dots,q_N],\quad \deg(q_j) = \lambda_j.\\
    &Q_{\geq p}R^+ = \bigoplus_{i\geq p} R^+_i,\quad p\in \mathbb{Z}.
\end{align*}

The partial $L_{\infty}$ algebra $\mathfrak{g}^{R^+}_{>0}$ has a tautological Maurer--Cartan element
\begin{equation}
    \alpha_0 = \bq\cdot \bx^D \defeq \sum_{j=1}^N q_jx^{D_j}
\end{equation}
(it satisfies the Maurer--Cartan equation because all terms vanish, see Remark \ref{rmk:sphere_bubble_output}).

Following the proof of Lemma \ref{lemma: path extension for MC elements}, we can homotope $\alpha_0$ through the constant path $\gamma_{\tau} = t\cdot\bq\cdot \bx^D$ by solving the ODE
\begin{equation}\label{eq: ctau}
    c_0 = \alpha_0,\quad \frac{d}{d\tau}c_\tau = \sum_{d\geq 0} \frac{1}{d!}\ell^{1+d}(\gamma,c_{\tau},\dots,c_{\tau}).
\end{equation}
This ODE can be solved order-by-order in $\tau$. However, since $\mathfrak{g}_{>0}$ is not exactly an $L_{\infty}$ algebra, more care is neeeded in checking that $\alpha_{\tau} = c_{\tau} + \gamma_{\tau}d\tau$ satisfies the Maurer--Cartan equations for $(\mathfrak{g}^{R^+}_{\tau,>0},\underline{\ell}^*)$.

\begin{lemma}
    We have  $c_{\tau}\in \mathfrak{g}^{R^+}_{\tau,>1}$.
\end{lemma}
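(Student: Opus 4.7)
The plan is to split $c_\tau = a_\tau + b_\tau$ along the decomposition \eqref{eq: partial L-infty algebra}, with $a_\tau$ the $SC^*$-component and $b_\tau$ the formal-generator component. Since the formal-generator summand of $\mathfrak{g}_{>1}$ coincides with that of $\mathfrak{g}_{>0}$ (i.e.\ $\bigoplus_j \bk[t]\cdot\abs{o_{x^{D_j}}}_\bk$ is unfiltered), the piece $b_\tau$ automatically lies in $\mathfrak{g}_{>1}^{R^+}\otimes\bk[\tau]$, and it suffices to upgrade the a priori membership $a_\tau \in \mathscr{P}_{>0} SC^*(X_\sigma) \wotimes R^+ \otimes \bk[\tau]$ to $a_\tau \in \mathscr{P}_{>1} SC^*(X_\sigma) \wotimes R^+ \otimes \bk[\tau]$.

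I would first determine $b_\tau$ explicitly. The formal-generator contribution to the right-hand side of \eqref{eq: ctau} comes only from the $\partial_t$ piece of $\ell^1$, since $\tilde\ell^d$ always lands in $SC^*$ and $\ell^d = \tilde\ell^d$ for $d \geq 2$. For $d=1$, $\ell^1(\gamma) = \tilde\ell^1(\gamma) + (-1)^{\abs{\gamma}}\partial_t(\gamma) = \tilde\ell^1(\gamma) - \alpha_0$, so the restriction of \eqref{eq: ctau} to the formal-generator summand reads $\tfrac{db_\tau}{d\tau} = -\alpha_0$ with $b_0 = \alpha_0$, giving $b_\tau = (1-\tau)\alpha_0$.

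For $a_\tau$, I would expand $c_\tau = \sum_{k \geq 0} c_k\,\tau^k/k!$ and prove by induction on $k$ that $\mathscr{P}(a_k) > 1$, with the trivial base case $a_0 = 0$. At the inductive step, $a_{k+1}$ is a finite sum of terms $\tilde\ell^{1+m}(\gamma, y_1, \ldots, y_m)$ obtained by expanding $c_\tau^{\otimes m} = (a_\tau + b_\tau)^{\otimes m}$, where each $y_i$ is either an SH-orbit from some $a_l$ with $l \leq k$ (so $\mathscr{P}(y_i) > 1$ by induction, and $y_i$ is $t$-free since $\tilde\ell^d$ takes values in the $t$-free part of $SC^*$) or a scalar multiple of $x^{D_j}$ from $b_l$ (so $\mathscr{P}(y_i) = -2$ and, since $b_l$ is $t$-free, contributes no sprinkle). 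The only sprinkled input is $\gamma = t\sum_j q_j x^{D_j}$, contributing $\abs{F}=1$, and a direct calculation using $\abs{tx^{D_j}} = 1-\lambda_j$ and $A(x^{D_j}) = -\kappa\lambda_j$ yields $\mathscr{P}(\gamma) = -1$.

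Applying \eqref{eq: F-inequality} to $\tilde\ell^{1+m}$ with $d = 1+m$ and $\abs{F}=1$, and using $1 + C + \kappa^{-1}C_{\bn} > 3$ from \eqref{eq: C>2}, I obtain, writing $j$ for the number of inputs from the $a$-part and $m-j$ from the $b$-part,
\[
\mathscr{P}(\text{output}) > -1 + (3j - 2m) + (2m-1) + 3 = 3j + 1 \geq 1,
\]
closing the induction. I expect the main obstacle to be the sharpness of this bound: the single sprinkle from $\gamma$, combined with the degree shift $\abs{t}=-1$ that raises $\mathscr{P}(tx^{D_j})$ from $-2$ to $-1$, furnishes exactly enough positivity to upgrade the naive estimate $\mathscr{P}(\text{output}) > 0$ to the required $\mathscr{P}(\text{output}) > 1$, and the strictness relies on the inequality \eqref{eq: C>2} we arranged via the choice of slope $\lambda$.
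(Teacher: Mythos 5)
Your proposal is correct and follows essentially the same strategy as the paper: split $c_\tau$ into its $SC^*$-part and formal-generator part, determine the latter explicitly as $(1-\tau)\alpha_0$ from the $\partial_t$-contribution to $\ell^1$, and prove $\mathscr{P}>1$ for the $SC^*$-part by induction on Taylor coefficients using \eqref{eq: F-inequality} and \eqref{eq: C>2}. The only cosmetic difference is that you apply the $(1+C+\kappa^{-1}C_{\bn})\abs{F}>3$ boost uniformly to get $\mathscr{P}(\text{output})>3j+1$, whereas the paper drops that term when $j\geq 1$ (getting $\mathscr{P}>3j-2\geq 1$) and invokes it only in the $j=0$ case; both bookkeepings are valid.
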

\begin{proof}
    We start by looking at the first order term $c_1$ from the power series expansion $c_{\tau} = c_0 + c_1\tau + \dotsi$ of $c_{\tau}$. It is computed from the formula
    \begin{align}\label{eq: first order term}
        c_1 
        &= \sum_{d\geq 0} \frac{1}{d!}\ell^{1+d}(\gamma,\bq\cdot \bx^D,\dots,\bq\cdot \bx^D)\\
        &= -\bq\cdot \bx^D + \sum_{d\geq 0} \frac{1}{d!}\tilde{\ell}^{1+d}(\gamma,\bq\cdot \bx^D,\dots,\bq\cdot \bx^D).\notag
    \end{align}
    Let $x_0$ be an orbit contributing to $\tilde{\ell}^{1+d}(\gamma,\bq\cdot \bx^D,\dots,\bq\cdot \bx^D)$. By \eqref{eq: F-inequality} and \eqref{eq: C>2},
    \begin{equation}
        \mathscr{P}(x_0) \geq -1 - 2d + 2(1+d)-3 + (1+C+\kappa^{-1}C_{(1,0,\dots,0)})> 1.
    \end{equation}
    It follows that $c_1\in \mathfrak{g}^{R^+}_{\tau,>1}$. For higher order terms, we proceed by induction. First, note that because of \eqref{eq: first order term}, we can write
    \begin{equation}
        c_{\tau} = (1-\tau)\bq\cdot \bx^D + \overline{c}_{\tau},
    \end{equation}
    where $\overline{c}_{\tau}\in \mathscr{P}_{>0}SC^*(X_{\sigma})\otimes \bk[\tau] \wotimes R^+$ is a combination of $1$-periodic orbits, and a solution to the ODE
    \begin{equation}
        \overline{c}_{0}=0, \quad \frac{d}{d\tau}\overline{c}_{\tau} = \sum_{d\geq 0 } \frac{1}{d!}\tilde{\ell}^{1+d}(\gamma,(1-\tau)\bq\cdot \bx^D + \overline{c}_{\tau},\dots,(1-\tau)\bq\cdot \bx^D + \overline{c}_{\tau}).
    \end{equation}
    Let $i>1$ be an integer. Then $c_i = \overline{c}_i$ is computed from operations
    \begin{equation}
        \tilde{\ell}^{1+j+k}(\gamma,c_{i_1},\dots,c_{i_j},\bq\cdot \bx^D,\dots,\bq\cdot \bx^D),
    \end{equation}
    where $i_1,\dots,i_j < i$. If $x_0$ is an output contributing to this operation, then
    \begin{equation}
        \mathscr{P}(x_0) > -1 + j + (-2)\cdot k + 2(1+j+k) - 3 = 3j-2.
    \end{equation}
    Therefore, $\mathscr{P}(x_0)>1$ unless $j=0$. But the case of $j=0$ follows by the same argument (using \eqref{eq: F-inequality} and \eqref{eq: C>2}) presented above for $c_1$.
\end{proof}

\begin{corollary}
    The element $\alpha_{\tau} = c_{\tau} + \gamma_{\tau} d\tau\in \mathfrak{g}^{R^+}_{\tau,>1}$ satisfies the Maurer--Cartan equation. In particular, $\alpha_{1} \in SC^*(X_{\sigma})\widehat{\otimes} R^+$ satisfies the Maurer--Cartan equation. 
    The Maurer--Cartan element $\alpha$ from Construction \ref{const} is defined to be $\alpha_1$.
\end{corollary}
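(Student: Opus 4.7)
The plan is to adapt the proof of Lemma \ref{lemma: path extension for MC elements} to the partial $L_\infty$ setting, carefully ensuring that every invocation of the $L_\infty$ equations uses inputs in $\mathfrak{g}_{>1}$ so that Lemma \ref{lem:L infinity relations} applies.

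Expanding $\mathcal{F}(\alpha_\tau)$ in $\mathfrak{g}^{R^+}_{\tau,>1}$ and separating $d\tau$-proportional from $d\tau$-free components, one sees that the $d\tau$-proportional component vanishes identically by construction of $c_\tau$ via the ODE \eqref{eq: ctau}. It therefore suffices to show $\mathcal{F}(c_\tau) = 0$ for all $\tau$. The initial condition $\mathcal{F}(c_0) = \mathcal{F}(\alpha_0)$ vanishes because every term $\ell^d(\alpha_0,\ldots,\alpha_0)$ has only formal-generator inputs $x^{D_j}$, and such operations vanish by Remark \ref{rmk:sphere_bubble_output}. Next, differentiating $\mathcal{F}(c_\tau)$ in $\tau$ and substituting the ODE for $\tfrac{dc_\tau}{d\tau}$ produces a sum of nested operations of the form $\ell^{1+e}(\ell^{1+d}(\gamma,c_\tau,\ldots,c_\tau),c_\tau,\ldots,c_\tau)$; applying the $L_\infty$ equations with inputs $(\gamma, c_\tau,\ldots,c_\tau)$ — valid by Lemma \ref{lem:L infinity relations}, because $\gamma = t\cdot\bq\cdot\bx^D$ and $c_\tau$ both lie in $\mathfrak{g}^{R^+}_{\tau,>1}$ by the preceding lemma — this rearranges into a linear ODE
$$\frac{d}{d\tau}\mathcal{F}(c_\tau) \;=\; \pm\sum_{d\geq 0}\frac{1}{d!}\,\ell^{1+d}\bigl(\mathcal{F}(c_\tau),\gamma,c_\tau,\ldots,c_\tau\bigr)$$
in $\mathcal{F}(c_\tau)$. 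Solving order-by-order in $\tau$ (convergence guaranteed because the $Q$-filtration is exhaustive and bounded below) together with the vanishing initial condition forces $\mathcal{F}(c_\tau) \equiv 0$. The remaining claim $\alpha_1 \in SC^*(X_\sigma)\widehat{\otimes}R^+$ follows from the decomposition $c_\tau = (1-\tau)\bq\cdot\bx^D + \overline{c}_\tau$ established in the preceding lemma, since the formal-generator summand vanishes at $\tau = 1$.

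The main obstacle is the partial-$L_\infty$ nature of $\mathfrak{g}$, which prevents a direct appeal to Lemma \ref{lemma: path extension for MC elements}: one must check that the $L_\infty$ rearrangement in the derivative computation only ever feeds inputs from $\mathfrak{g}_{>1}$ into the $L_\infty$ equations. This is precisely the reason the preceding lemma was established; once its conclusion $c_\tau \in \mathfrak{g}^{R^+}_{\tau,>1}$ is in hand, the rest of the argument is formal manipulation of the Maurer--Cartan ODE.
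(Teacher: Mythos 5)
Your proposal is correct and takes essentially the same approach as the paper, which simply observes that because $\gamma_\tau, c_\tau \in \mathfrak{g}^{R^+}_{>1}\wotimes\bk[\tau]$, the $L_\infty$ relations invoked in the proof of the path-lifting lemma (Lemma \ref{lemma: path extension for MC elements}) are available via Lemma \ref{lem:L infinity relations}, so that argument goes through verbatim. You have merely unpacked the paper's one-line proof, correctly identifying both the decomposition into $d\tau$-free and $d\tau$-proportional components and the fact that the initial condition $\mathcal{F}(\alpha_0)=0$ holds by Remark \ref{rmk:sphere_bubble_output}.
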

\begin{proof}
    Since $\gamma_{\tau},c_{\tau} \in \mathfrak{g}^{R^+}_{>1}\wotimes\bk[\tau]$, the $L_{\infty}$ equations needed for the proof of the Maurer--Cartan equation in Lemma \ref{lemma: path extension for MC elements} hold, so that the same argument works here as well.
\end{proof}

\subsection{Maurer--Cartan deformation}

For any $1$-periodic orbit $x$, define
\begin{equation}
    \mathcal{F}(x) = C\cdot n_x  + \kappa^{-1}A(x).
\end{equation}
It was shown in \cite[Proposition 1.14]{BSV} that the inclusions $\mathcal{F}^X_{\geq p} \subseteq SC^*(X_{\sigma})$ are quasi-isomorphisms, where 
\begin{equation}
    \mathcal{F}^X_{\geq p} = \mathcal{F}_{\geq p}SC^*(X_{\sigma}),\quad p\in\mathbb{R}.
\end{equation}

Let $\alpha_{\tau} \in \mathfrak{g}^{R^+}_{\tau,>1}$ be the Maurer--Cartan element previously constructed.
\begin{lemma}
    The subspace $\mathcal{F}_{\geq p}\mathfrak{g}^{R^+}_{\tau,>0} \subseteq \mathfrak{g}^{R^+}_{\tau,>0}$ is preserved by $\underline{\ell}^1_{\alpha_{\tau}}$.
\end{lemma}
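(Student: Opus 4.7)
The plan is to show directly that each term in the expansion
\[
\underline{\ell}^1_{\alpha_\tau}(x) = \sum_{k \geq 0}\tfrac{1}{k!}\,\underline{\ell}^{1+k}(\alpha_\tau^{\otimes k}, x)
\]
lies in $\mathcal{F}_{\geq p}\mathfrak{g}^{R^+}_{\tau,>0}$. Convergence of the sum is automatic since $\alpha_\tau \in Q_{\geq 1}$ forces the $k$-th term into $Q_{\geq k}$, and the de Rham component of $\underline{\ell}^1$ merely reshuffles the $\tau, d\tau$ factor and so preserves $\mathcal{F}$ trivially. It thus suffices to bound the $L_\infty$-part.

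The key ingredient is the action-filtration analogue of \eqref{eq: F-inequality}: for any isolated element counted in $\tilde\ell^d(y_1,\dots,y_d)$ with output $x_0$,
\[
\mathcal{F}(x_0) \;\geq\; \sum_{i=1}^d \mathcal{F}(y_i) + (C+\kappa^{-1}C_{\bn})\,|F|.
\]
This is precisely the second displayed inequality appearing inside the proof of Lemma \ref{lem:elld P}, and it follows from the curvature estimate \eqref{curvature estimate} combined with the weight identity $n_0 = \sum_i n_i + |F|$. It extends by $R^+$-linearity upon setting $\mathcal{F}(q^I y) := \sum_j I_j\lambda_j + \mathcal{F}(y)$, i.e.\ treating each $q_j$ as carrying action $\kappa\lambda_j$.

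It then remains to verify that every summand of $\alpha_\tau = (1-\tau)\bq\cdot\bx^D + \bar c_\tau + \gamma_\tau\,d\tau$ has $\mathcal{F} \geq 0$. For the formal-generator summands, $\mathcal{F}(q_j x^{D_j}) = \lambda_j + \kappa^{-1}(-\kappa\lambda_j) = 0$, and likewise for the $\gamma_\tau\,d\tau$ piece. For any summand $q^I y$ of $\bar c_\tau$ with $\mathscr{P}(y) > 0$, the total-degree identity $\sum_j I_j\lambda_j + |y| = 2$ gives $\mathcal{F}(q^I y) > 2$. Feeding these into the action inequality yields $\mathcal{F}(\underline{\ell}^{1+k}(\alpha_\tau^{\otimes k}, x)) \geq \mathcal{F}(x) \geq p$, and the output automatically sits inside $\mathfrak{g}_{>0}$ by the $\mathscr{P}_{>0}$-truncation built into $\tilde\ell^d$ (cf.\ Remark \ref{remark: compactness for $0$-dimensional moduli spaces}). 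The only delicate point to keep an eye on is that the formal generators $q_j x^{D_j}$ sit exactly on the boundary $\mathcal{F}_{\geq 0}$; but any input of this form contributes $+1$ to $|F|$, and \eqref{eq: C>2} gives $(C+\kappa^{-1}C_{\bn}) > 2$, so the curvature term strictly buffers against the boundary behavior.
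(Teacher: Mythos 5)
There is a genuine gap. The "extension" $\mathcal{F}(q^I y) := \sum_j I_j\lambda_j + \mathcal{F}(y)$ changes the filtration you are working with. The filtration $\mathcal{F}_{\geq p}\mathfrak{g}^{R^+}_{\tau,>0}$ in the statement is $(\mathcal{F}_{\geq p}\mathfrak{g}_{>0})\wotimes R^+\otimes\bk[\tau,d\tau]$: the $\mathcal{F}$-level of a generator is that of its underlying orbit (or formal generator), and the $q$-power does \emph{not} contribute --- this is how the filtration enters Lemmas \ref{lem: qis} and \ref{lemma: Iso with SC}, where it appears as $\mathcal{F}^X_{\geq p}\wotimes R^+$. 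Your argument shows that for an output term $q^{K}x_0$ one has $\sum_j K_j\lambda_j + \mathcal{F}(x_0) \geq p$, hence only $\mathcal{F}(x_0) \geq p - \sum_j K_j\lambda_j$; since the $\alpha_\tau$-inputs strictly increase the $q$-power, this is strictly weaker than the required $\mathcal{F}(x_0) \geq p$, so you are proving preservation of the wrong (coarser) filtration.

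The inequality you quote, $\mathcal{F}(x_0) \geq \sum_{i=1}^d \mathcal{F}(y_i) + (C+\kappa^{-1}C_{\bn})|F|$ with formal generators entering at $\mathcal{F}(x^{D_j}) = -\lambda_j$, is valid but deliberately lossy. The honest form, read off directly from the action estimate $A(x_0) \geq \sum_{i:n_i>0} A(x_i) + C_{\bn}|F|$ and the weight identity $n_0 = \sum_i n_i + |F|$, is
\begin{equation*}
\mathcal{F}(x_0) \;\geq\; \sum_{i:\,n_i>0}\mathcal{F}(x_i) + (C+\kappa^{-1}C_{\bn})\,|F|,
\end{equation*}
with the sum running only over positively-weighted inputs. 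With this, the formal generators simply contribute nothing, no extension of $\mathcal{F}$ to $q$-powers is needed, and $\mathcal{F}(x_0) \geq \mathcal{F}(x)$ follows once one knows that the orbit-type inputs from $\overline{c}_\tau$ have $\mathcal{F} \geq 0$. That last point requires a bootstrap which your write-up does not supply (your bound $\mathcal{F}(q^Iy)>2$ for $\overline c_\tau$-summands is again a statement about the extended filtration, not about $\mathcal{F}(y)$ itself): one argues order-by-order in the ODE \eqref{eq: ctau}, using that every contributing moduli space contains the sprinkle attached to $\gamma = t\bq\cdot\bx^D$, so $|F|\geq 1$ and $(C+\kappa^{-1}C_{\bn})|F| > 2 > 0$.

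Your final sentence is also incorrect: an input $q_j x^{D_j}$ from $(1-\tau)\bq\cdot\bx^D$ carries no $t$ and hence no sprinkle, so it contributes $0$ to $|F|$, not $+1$. Only the $\gamma_\tau\,d\tau$-summands carry a sprinkle. With the sharper inequality above one does not need any buffer from the unsprinkled formal generators in any case, but the misstatement is symptomatic of the accounting issue above.
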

\begin{proof}
    Let $x$ be a $1$-periodic orbit such that $\mathscr{P}(x)>0$. We have
    \begin{equation}
        \underline{\ell}^1_{\alpha_{\tau}}(x) = \sum_{d\geq 0} \frac{1}{d!}\underline{\ell}^{1+d}(x,\alpha_{\tau},\dotsi,\alpha_{\tau}).
    \end{equation}
    If $x_0$ is an orbit which contributes to the term $\underline{\ell}^{1+d}(x,\alpha_{\tau},\dotsi,\alpha_{\tau})$, then
    \begin{align*}
        C\cdot n_{x_0} + \kappa^{-1}A(x_0)
        &= \mathscr{P}(x_0) + \abs{x_0}\\
        &\geq \mathscr{P}(x) - 2d + 2(1+d) - 3 + \abs{x}+1\\
        &= C\cdot n_x + \kappa^{-1}A(x).
    \end{align*}
\end{proof}

For a graded complex $C^{\bullet}$, we define its degree-truncation at $p\in\mathbb{R}$, 
\begin{equation}
    \sigma_{<p} C^{\bullet} = \bigoplus_{i<p}C^i.
\end{equation}
It is equivalent to the quotient complex $C^{\bullet}/C^{\bullet\geq p}$ and has the property that
\begin{equation}\label{eq: cohomology isom}
    H^{j}(\sigma_{<p} C^{\bullet}) = H^{j}(C^{\bullet}) \quad\text{for all} \ j<p-1.
\end{equation}

\begin{lemma}\label{lem: qis}
    We have an isomorphism
\begin{equation}\label{eq: from alpha_0 to alpha_1}
    H^j(\sigma_{<p-1}\mathcal{F}^X_{\geq p}\wotimes R^+,\ell^1_{\alpha_1}) \cong H^j(\sigma_{<p-1}\mathcal{F}^X_{\geq p}\wotimes R^+,\ell^1_{\alpha_0})
\end{equation}
for all $p\in \mathbb{R}$. 
\end{lemma}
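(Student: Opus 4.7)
The plan is to adapt the gauge-equivalence argument of Lemma \ref{lemma: qis} to our restricted setting, exploiting the fact that the subquotient $\sigma_{<p-1}\mathcal{F}^X_{\geq p}\wotimes R^+$ sits inside the locus $\mathfrak{g}^{R^+}_{>1}$ where the full $L_\infty$ relations are valid.

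First, I would verify that $(\sigma_{<p-1}\mathcal{F}^X_{\geq p}\wotimes R^+,\ell^1_{\alpha_c})$ is a well-defined chain complex for each $c \in \bk$. Any representative $x$ satisfies $\mathcal{F}(x) \geq p$ and $\abs{x} < p-1$, so $\mathscr{P}(x) = \mathcal{F}(x) - \abs{x} > 1$. Hence the subspace sits inside $\mathfrak{g}^{R^+}_{>1}$. The previous lemma shows that $\ell^1_{\alpha_c}$ preserves $\mathcal{F}^X_{\geq p}\wotimes R^+$, and since $\ell^1$ raises degree by one, the subspace $\mathcal{F}^X_{\geq p}\wotimes R^+$ restricted to degrees $\geq p-1$ is preserved; thus $\ell^1_{\alpha_c}$ descends to the truncation.

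Next, I would form the cylinder complex
\begin{equation*}
V \defeq \sigma_{<p-1}\mathcal{F}^X_{\geq p}\wotimes R^+ \otimes_\bk \bk[\tau,d\tau]
\end{equation*}
equipped with the deformed differential $\underline{\ell}^1_{\alpha_\tau}$ induced by linearly extending the operations to $\mathfrak{g}^{R^+}_{\tau,>0}$. The identity $(\underline{\ell}^1_{\alpha_\tau})^2 = 0$ is the standard consequence of the Maurer--Cartan equation for $\alpha_\tau$ together with the $L_\infty$ relations. Here the crucial point is that every intermediate term $\underline{\ell}^{1+k}(x,\alpha_\tau,\ldots,\alpha_\tau)$ has all inputs in $\mathfrak{g}^{R^+}_{\tau,>1}$: the input $x$ by the first step, and each copy of $\alpha_\tau$ by the Corollary preceding this lemma. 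Therefore Lemma \ref{lem:L infinity relations} applies and the usual cancellation goes through. The same verification shows that $ev_0,ev_1\colon V \to \sigma_{<p-1}\mathcal{F}^X_{\geq p}\wotimes R^+$ are chain maps to the complexes with differentials $\ell^1_{\alpha_0}$ and $\ell^1_{\alpha_1}$ respectively.

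Finally, to conclude that both evaluations are quasi-isomorphisms, I would transplant the spectral sequence argument of Lemma \ref{lemma: qis}. The $Q$-filtration on $R^+$ induces exhaustive, bounded-below filtrations on $V$ and on its endpoints, and $ev_c$ respects them. Because $\alpha_\tau \in Q_{\geq 1}\mathfrak{g}^{R^+}_{\tau,>1}$, the induced map on the $E_1$ page factors as $\mathrm{id}\otimes ev_c$ tensored with the standard quasi-isomorphism $(\bk[\tau,d\tau],d)\to(\bk,0)$, which is a quasi-isomorphism by Künneth. The Eilenberg--Moore comparison theorem \cite[Theorems 5.2.12 and 5.5.1]{Weibel} then yields the desired isomorphism \eqref{eq: from alpha_0 to alpha_1}.

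The main obstacle is the bookkeeping around $\mathfrak{g}_{>0}$ being only a \emph{partial} $L_\infty$ algebra: whenever one invokes $L_\infty$ relations or the Maurer--Cartan formalism, one must confirm that the relevant inputs sit in $\mathfrak{g}_{>1}$. In the present proof all such checks reduce cleanly to the containment $\mathscr{P}>1$ on the subcomplex together with the analogous containment for $\alpha_\tau$; a genuine novelty relative to \cite{ES1} would arise only if one tried to extend the argument to the full $\mathfrak{g}_{>0}$, which we do not need.
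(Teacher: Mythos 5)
Your proposal is correct and takes essentially the same approach as the paper's (very terse) proof: you verify that the truncated, filtered subquotient sits inside $\mathfrak{g}_{>1}$ (via $\mathscr{P}(x) = \mathcal{F}(x) - \abs{x} > p - (p-1) = 1$) so that the $L_\infty$ relations and hence $(\underline{\ell}^1_{\alpha_\tau})^2 = 0$ hold there, and then transplant the gauge-equivalence/spectral-sequence argument of Lemma~\ref{lemma: qis}. The paper collapses this to one sentence but the substance is identical.
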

\begin{proof}
Using the $L_{\infty}$ relations on $\mathfrak{g}_{>1}$, we deduce that $\underline{\ell}^1_{\alpha_{\tau}}$ is a differential when restricted to $\sigma_{<p-1}(\mathcal{F}^X_{\geq p}\wotimes R^+)\wotimes \bk[\tau,d\tau]$. As a consequence, the result follows by the same argument as in the proof of Lemma \ref{lemma: qis}. 
\end{proof}

\begin{lemma}\label{lemma: Iso with SC}
    For any $j<p-2$, we have an isomorphism
    \begin{equation}
        H^j(\sigma_{<p-1}\mathcal{F}^X_{\geq p}\wotimes R^+,\ell^1_{\alpha_1}) \cong H^j(SC^*(X_{\sigma})\wotimes R^+,\ell_{\alpha_1}^1).
    \end{equation}
\end{lemma}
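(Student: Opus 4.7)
The plan is to factor the claimed isomorphism into two steps: first, remove the degree-truncation by appealing to the general property \eqref{eq: cohomology isom}; and second, replace $\mathcal{F}^X_{\geq p}$ by $SC^*(X_\sigma)$ by extending the quasi-isomorphism of \cite[Proposition 1.14]{BSV} to the deformed and $R^+$-linear setting.

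For the first step, I would note that the preceding lemma ensures $\mathcal{F}^X_{\geq p}\wotimes R^+$ is preserved by $\ell^1_{\alpha_1}$, so its degree-truncation inherits a quotient differential. Applying \eqref{eq: cohomology isom} with the parameter $p-1$ in place of $p$ then yields
\[
H^j(\sigma_{<p-1}\mathcal{F}^X_{\geq p}\wotimes R^+,\ell^1_{\alpha_1}) \cong H^j(\mathcal{F}^X_{\geq p}\wotimes R^+,\ell^1_{\alpha_1}) \quad \text{for all } j < p-2,
\]
which is exactly the range claimed in the statement.

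For the second step, I would mimic the spectral sequence argument used in the proof of Lemma \ref{lemma: qis}. Equip both the sub- and super-complex with the $Q$-filtration coming from $R^+$, which is exhaustive and bounded below; the inclusion preserves this filtration tautologically. Because $\alpha_1\in Q_{\geq 1}\mathfrak{g}^{R^+}$, the differential $\ell^1_{\alpha_1}$ reduces modulo $Q_{\geq 1}$ to the undeformed $\ell^1$, so the induced map on associated gradeds becomes
\[
(\mathcal{F}^X_{\geq p},\ell^1)\otimes\mathrm{Gr}_Q R^+ \hookrightarrow (SC^*(X_\sigma),\ell^1)\otimes \mathrm{Gr}_Q R^+,
\]
which is a quasi-isomorphism by \cite[Proposition 1.14]{BSV} combined with the fact that $R^+$ is free over the field $\bk$. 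The classical spectral sequence comparison theorems \cite[Theorems 5.2.12 and 5.5.1]{Weibel} then promote this to a quasi-isomorphism of the deformed complexes, completing the second step.

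The main subtlety worth flagging, which I would handle with care, is that $\ell^1_{\alpha_1}$ must be well-defined on the full complex $SC^*(X_\sigma)\wotimes R^+$ even though the operations $\ell^d$ were only constructed on the partial $L_\infty$ algebra $\mathfrak{g}_{>1}$. This is handled by Lemma \ref{lem:SC agree}: since the construction of $\alpha_1$ shows $\alpha_1\in \mathscr{P}_{>1}SC^*(X_\sigma)\wotimes R^+$, I can replace $\ell^d$ by the genuine $L_\infty$ structure $\ell^d_{SC}$ of \cite{ES1} in the series defining $\ell^1_{\alpha_1}$, obtaining a differential on all of $SC^*(X_\sigma)\wotimes R^+$ that agrees with the previously defined $\ell^1_{\alpha_1}$ on the subcomplex $\mathcal{F}^X_{\geq p}\wotimes R^+$.
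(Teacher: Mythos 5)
Your proposal is correct and follows essentially the same route as the paper: remove the degree-truncation via \eqref{eq: cohomology isom}, then show the inclusion $\mathcal{F}^X_{\geq p}\wotimes R^+\hookrightarrow SC^*(X_\sigma)\wotimes R^+$ is a quasi-isomorphism by a $Q$-filtration spectral sequence argument reducing to \cite[Proposition 1.14]{BSV}, exactly as in the proof of Lemma~\ref{lemma: qis}. The subtlety you flag at the end — that $\ell^1_{\alpha_1}$ on the full complex $SC^*(X_\sigma)\wotimes R^+$ must be understood via the genuine $L_\infty$ operations $\ell^d_{SC}$ of \cite{ES1}, with Lemma~\ref{lem:SC agree} guaranteeing consistency with the partial operations wherever both are defined — is a real point that the paper handles implicitly; your making it explicit is a welcome clarification rather than a deviation.
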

\begin{proof}
    Using the isomorphism in \eqref{eq: cohomology isom}, it is enough to show that the inclusion
    \begin{equation}
        (\mathcal{F}^X_{\geq p} \wotimes R^+, \ell^1_{\alpha_1})\rightarrow (SC^*(X_{\sigma})\wotimes R^+,\ell_{\alpha_1}^1)
    \end{equation}
    is a quasi-isomorphism. This is true by a spectral sequence comparison argument similar to the proof of Lemma \ref{lemma: qis}. Indeed, using the $Q$-filtration on both sides, together with the fact that $\alpha_1 \in Q_{\geq 1}\mathfrak{g}_{>1}^{R^+}$, the induced map on the $E_1$-page is the inclusion
    \begin{equation}
        (\mathcal{F}^X_{\geq p} \otimes Q_{\geq j} R^+/Q_{\geq j+1} R^+, \ell^1)\rightarrow(SC^*(X_{\sigma})\otimes Q_{\geq j} R^+/Q_{\geq j+1} R^+, \ell^1),
    \end{equation}
    which is an isomorphism by \cite[Proposition 1.14]{BSV}.
\end{proof}

\subsection{Recovering Quantum Cohomology}

In this section we complete the proof of Theorem \ref{thm}. 

For this we need to pass to the localization $R$ of $R^+$ at the product $q_1\dotsi q_N$,
\begin{equation}
    R  = \bk[q_1^{\pm 1},\dots,q_N^{\pm 1}].
\end{equation}
It has the structure of an $R^+$-algebra by the localization homomorphism, and of a $\Lambda^{\gr}_{\omega}$-algebra by the ring homomorphism
\begin{equation}
    \Lambda^{\gr}_{\omega} \rightarrow R,\quad e^A\mapsto \bq^{A\cdot \bD} \defeq q_1^{A\cdot D_1}\dotsi q_N^{A\cdot D_N}.
\end{equation}
Note that the filtrations on $R$ and $\Lambda^{\gr}_{\omega}$ are related,
\begin{equation}
    A_{\omega}(e^A) = 2\kappa \abs{\bq^{A\cdot \bD}}.
\end{equation}

By \cite[Proposition 1.15]{BSV}, Hypothesis \ref{hyp} implies that $\mathfrak{g}_{>0}$ is concentrated in non-negative degrees, and therefore that
\begin{equation}
    SC^*(X_{\sigma})\wotimes R = SC^*(X_{\sigma})\otimes R. 
\end{equation}

As $R$ is a flat $R^+$-module, the isomorphisms of Lemmas \ref{lem: qis} and \ref{lemma: Iso with SC} hold true over $R$:
\begin{align}
    H^j(\sigma_{<p-1}\mathcal{F}^X_{\geq p}\otimes R, \ell^1_{\alpha_1}) &\cong H^j(\sigma_{<p-1}\mathcal{F}^X_{\geq p}\otimes R, \ell^1_{\alpha_0}).\label{eq: alpha_0 to alpha_1 over R}\\
    H^j(\sigma_{<p-1}\mathcal{F}^X_{\geq p}\otimes R,\ell^1_{\alpha_1}) &\cong H^j(SC^*(X_{\sigma})\otimes R,\ell_{\alpha_1}^1).\label{eq: from F^X to SC}
\end{align}

Consider the telescoping complex
\begin{equation}
    QC^*(M;R) = \bigoplus_{n=1}^{\infty} CF^*(M,H_n;R)[t].
\end{equation}
It is equipped with a differential $\tilde{d}^{QC}$ which counts Floer trajectories \eqref{eq: Floer differential} and continuation maps using a generic $\omega$-compatible almost complex structure $(\tilde{J}_{n})_n$, and perturbation data for the continuation maps, on $M$. We emphasize that we require \emph{all} Floer trajectories and continuation maps for $(\tilde{J}_{n})_n$, and the corresponding perturbation data for the continuation maps, to be Fredholm regular. This is in contrast to the family $(J_n)_n$ and perturbation data from \S \ref{subsec: Almost complex structures}, for which we do not have regularity of Floer trajectories and continuation maps between $D$-orbits. Nonetheless, counts using the family $(J_n)_n$ can be used to define a diffential $d^{QC}$ on the complexes $\sigma_{<p-1}\mathcal{F}^M_{p}$, where
\begin{equation}
    \mathcal{F}^M_{p} = \mathcal{F}_{\geq p} QC^*(M;R),\quad p\in \mathbb{R}.
\end{equation}
\begin{lemma}\label{lemma: dQC from bad J}
    For any $j<p-2$, we have an isomorphism
    \begin{equation}
        H^j(\sigma_{<p-1}\mathcal{F}^M_{p},d^{QC}) \cong H^j(\sigma_{<p-1}\mathcal{F}^M_{p},\tilde{d}^{QC}).
    \end{equation}
\end{lemma}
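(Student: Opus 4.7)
The plan is to exploit the degree truncation to reduce everything to moduli spaces that are regular under the given Floer data. First, by Lemma \ref{lemma: D-orbits are P-negative}, every capped $D$-orbit $x$ satisfies $\mathscr{P}(x) < 0$, so $|x| > \mathcal{F}(x) \geq p > p-1$ whenever $x \in \mathcal{F}^M_p$. Therefore $D$-orbits are cut off by the degree truncation, and $\sigma_{<p-1}\mathcal{F}^M_p$ is generated (as a graded vector space) solely by $SH$-orbits, independently of whether we use $(J_n)$ or $(\tilde J_n)$.

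Second, I would check that $d^{QC}$, defined by counting $(J_n)$-trajectories between $SH$-orbits, is a well-defined differential on the truncation. For an $SH$-orbit $y$ with $|y| < p - 1$, a boundary breaking of a one-dimensional $(J_n)$-trajectory moduli space $\mathcal{M}(x, y)$ factors through an intermediate capped orbit $z$ of degree $|z| = |y| + 1 < p$. Since any $D$-orbit in $\mathcal{F}^M_p$ has $|z| > p$, the intermediate orbit $z$ must be an $SH$-orbit, for which the relevant $(J_n)$-moduli spaces are regular by our choice of $J_n$. The standard cobordism argument then gives $(d^{QC})^2 = 0$.

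Third, to produce the isomorphism I would construct a continuation chain map $\Phi: (\sigma_{<p-1}\mathcal{F}^M_p, d^{QC}) \to (\sigma_{<p-1}\mathcal{F}^M_p, \tilde d^{QC})$ using a generic interpolating family $(J_n^s)_{s \in [0,1]}$ between $J_n$ and $\tilde J_n$, together with compatible continuation perturbations. Genericity makes the $(J_n^s)$-continuation moduli spaces between $SH$-orbits regular, and the same degree argument rules out $D$-orbit breakings in the parametric moduli spaces used to establish the chain map property. Symmetrically, a reverse homotopy yields $\Phi'$ in the opposite direction, and a standard two-parameter family argument produces chain homotopies $\Phi' \circ \Phi \simeq \mathrm{id}$ and $\Phi \circ \Phi' \simeq \mathrm{id}$; passing to cohomology yields the isomorphism. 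The restriction to $j < p - 2$ is precisely the range in which the cohomology of the degree truncation agrees with the cohomology of the untruncated complex (cf.~\eqref{eq: cohomology isom}), so the comparison is not distorted by truncation artifacts. The main obstacle I anticipate is the degree bookkeeping for the higher-dimensional parametric moduli spaces, specifically verifying that no intermediate $D$-orbit breakings occur at any level and that regularity can be achieved throughout by generic perturbations respecting the divisor-preservation constraint on the fixed endpoints $(J_n)$ and $(\tilde J_n)$.
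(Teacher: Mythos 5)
Your proposal follows the same overall route as the paper: the Lemma is proved by the standard TQFT comparison argument — construct a continuation chain map from a generic homotopy $J_{s,t}$ interpolating between the two families, then a chain homotopy inverse — with the novelty being how one deals with the non-regularity of the $(J_n)$-moduli spaces involving $D$-orbits. The key structural observation you make (that $\mathscr{P}(x)<0$ for $D$-orbits forces $|x|>\mathcal{F}(x)\geq p$, so the degree truncation at $p-1$ eliminates them, and the filtration monotonicity keeps intermediate breakings inside $\mathcal{F}^M_p$) is a valid, self-contained way to see that all the relevant moduli spaces have $SH$-orbit asymptotics.

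Where you diverge from the paper, and where there is a small but genuine gap: the paper invokes Lemma~\ref{lemma: no D-orbits} \emph{and} Lemma~\ref{lemma: no sphere bubbles}, whereas your argument addresses only the $D$-orbit issue. Because the moduli spaces underlying $d^{QC}$, $\tilde d^{QC}$, and the continuation/homotopy maps consist of Floer cylinders in the \emph{closed} manifold $M$, Gromov compactness produces a second class of problematic degenerations: sphere bubbles. Ruling these out in dimensions $0$ and $1$ requires the monotonicity index argument, and — crucially — that it can be carried out for $J_n$ constrained to lie in the restricted class $\mathscr{J}$ (which must respect $D$ and be of contact type on the neck); this is precisely what the paper arranges in the construction of $J_n$ (no Chern-number-$1$ spheres through $SH$-orbit images) and in Lemma~\ref{lemma: no sphere bubbles}. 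Your proof should cite or reprove this. Finally, a minor point: the restriction $j<p-2$ is not merely so that the truncated complexes agree with the untruncated ones; it is needed so that the chain-homotopy identity $d^{QC}H + Hd^{QC}=\Phi'\Phi-\mathrm{id}$, which involves $C^{j+1}$, is not disrupted by the truncation at degree $p-1$.
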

\begin{proof}
    This follows from the usual TQFT style argument explained in \cite[\S 8]{PL-theory}, where one constructs a chain map
    \begin{equation} \label{eq: chain map}
        (\sigma_{<p-1}\mathcal{F}^M_{p},d^{QC}) \rightarrow  (\sigma_{<p-1}\mathcal{F}^M_{p},\tilde{d}^{QC})
    \end{equation}
    from a generic homotopy $J_{s,t}$ between $J_t$ and $\tilde{J}_t$. The difficulty in our case is that we do not have transversality for Floer trajectories between $D$-orbits. Instead, the compactness results needed to define \eqref{eq: chain map} and to show it is a chain map are Lemma \ref{lemma: no D-orbits} and Lemma \ref{lemma: no sphere bubbles}.
\end{proof}
 
The complex $(\sigma_{<p-1}\mathcal{F}^M_{p},d^{QC})$ only involves $1$-periodic orbits in $X_{\sigma}$, but unlike $\ell^1=d^{SC}$, the differential $d^{QC}$ counts Floer trajectories that interesect the divisor $D$. 

Define a morphism of $R$-modules
\begin{equation}
    \sigma_{<p-1}\mathcal{F}^X_{\geq p}\otimes_{\bk} R \rightarrow \sigma_{<p-1}\mathcal{F}^M_{p},\quad
    \Phi(\gamma) = [\gamma,u]\otimes q_1^{u\cdot D_1}\dotsi q_N^{u\cdot D_N}.
\end{equation}
It is not difficult to check that the image $\Phi(\gamma)$ does not depend on the choice of the cap $u$ for $\gamma$. For simplicity of notation, we write $\Phi(\gamma) = [\gamma,u]\otimes \bq^{u\cdot \bD}$.

\begin{lemma}\label{lemma: alpha_zero gives dQC}
    The map $\Phi$ is an isomorphism of chain complexes
    \begin{equation}
        (\sigma_{<p-1}\mathcal{F}^X_{\geq p} \otimes R,\ell^1_{\alpha_0})\rightarrow (\sigma_{<p-1}\mathcal{F}^M_{p}, d^{QC}).
    \end{equation}
\end{lemma}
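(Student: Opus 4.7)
The plan has two components: first, checking that $\Phi$ is an isomorphism of graded $R$-modules preserving both the degree truncation and the action filtration; second, verifying the chain-map identity $\Phi \circ \ell^1_{\alpha_0} = d^{QC} \circ \Phi$. The second component is the heart of the proof and proceeds via an explicit identification of the moduli spaces contributing to the two differentials.

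For the module isomorphism, I would unwind the definition $CF^*(M,H_n;R) = CF^*(M,H_n) \otimes_{\Lambda^{\gr}_\omega} R$. Under the $\Lambda^{\gr}_\omega$-algebra structure $e^A \mapsto \bq^{A \cdot \bD}$, the cap-change relation $[\gamma, u \# A] = e^{-A} \cdot [\gamma, u]$ becomes $[\gamma, u \# A] \otimes 1 = [\gamma, u] \otimes \bq^{-A \cdot \bD}$, so $\Phi(\gamma) = [\gamma, u] \otimes \bq^{u \cdot \bD}$ is independent of the cap and defines an $R$-linear bijection between generators on the two sides. The identities $\abs{\gamma}_{SH} = \abs{(\gamma,u)} + u \cdot D^{\blambda}$ and $A^{SH}_{H_n}(\gamma) = A_{H_n}(\gamma,u) + \kappa\cdot u \cdot D^{\blambda}$ recalled at the start of Section 4 show that $\Phi$ respects degree and action, hence the truncations $\sigma_{<p-1}$ and $\mathcal{F}_{\geq p}$ on the two sides correspond.

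For the chain-map verification, the crucial input is the \textbf{(Forgetfulness)} property of the perturbation data: in the degenerate case where forgetting all forgettable marked points yields a Floer cylinder, the perturbation data were arranged to coincide with the Floer data $(J_n,H_n)$ used to define $d^{QC}$. Thus each element of $\mapsms^{\al}_{1+d, \emptyset, (n_-,n_+,0,\dots,0)}(\gamma', \gamma, x^{D_{j_1}}, \dots, x^{D_{j_d}})$ is literally a Floer trajectory (or continuation map) $u$ in $M$ equipped with an ordered tuple $(z_2, \ldots, z_{1+d})$ of marked points on the cylinder satisfying the tangency conditions $u(z_{1+i}) \in D_{j_i}$. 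For each such $u$ with intersection pattern $(n_1,\ldots,n_N) = (u \cdot D_1, \ldots, u \cdot D_N)$ and $d := \sum_j n_j$, the set of ordered tuples compatible with a labeling $\mathbf{j}$ is empty unless $\mathbf{j}$ has exactly $n_j$ entries equal to $j$ for each $j$, in which case it has $n_1! \cdots n_N!$ elements. Combining the $1/d!$ from the $L_\infty$ formula of $\ell^1_{\alpha_0}$, the number $\binom{d}{n_1,\ldots,n_N}$ of admissible labelings, and the Novikov weight $q_{j_1}\cdots q_{j_d} = q_1^{n_1}\cdots q_N^{n_N}$ coming from $\alpha_0 = \sum_j q_j x^{D_j}$, the total coefficient of $u$ in $\Phi(\ell^1_{\alpha_0}(\gamma))$ is
\[
\frac{1}{d!}\binom{d}{n_1,\ldots,n_N}\, n_1!\cdots n_N! \cdot q_1^{n_1}\cdots q_N^{n_N} = \bq^{u \cdot \bD},
\]
which matches the weight of $u$ in $d^{QC}(\Phi(\gamma))$. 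The telescope continuation terms in $\ell^1$ pair with the continuation-map contributions in $d^{QC}$ by the same argument applied to continuation cylinders in place of Floer cylinders.

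The main obstacle is compactness and transversality: to make the counts on both sides well-defined and to rule out additional boundary terms in the relevant $1$-dimensional moduli spaces, one must show that no $L_\infty$ sequence can break along a $D$-orbit or develop a sphere bubble. Precisely this is furnished by Lemmas \ref{lemma: no D-orbits} and \ref{lemma: no sphere bubbles}, applied via Remark \ref{remark: compactness for $0$-dimensional moduli spaces}; the hypothesis $\mathscr{P}(x_j) > 0$ on all SH-orbits involved is automatic once we have restricted to $\sigma_{<p-1}\mathcal{F}^X_{\geq p}$, since $\mathscr{P}$ is bounded below on the truncated subcomplex. A secondary but genuine technical point is the sign matching between the $L_\infty$ orientation conventions encoded in the lines $\abs{o_\mathbf{u}}$ of \cite{ES1} and the standard Floer orientation on $CF^*(M,H_n;R)$; this is a bookkeeping verification, where the $\partial_t$-linearity of $\ell^1$ on the telescope factor matches the corresponding term $(-1)^{\abs{\gamma}}\partial_t$ on the quantum telescope side.
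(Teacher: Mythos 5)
Your proposal is correct and follows essentially the same line of argument as the paper: observe that $\Phi$ is a cap-independent bijection on generators; invoke the \textbf{(Forgetfulness)} property to identify elements of $\mapsms^{\al}_{1+d,\emptyset,\bn}(\gamma_0,\gamma_1,x^{D_{j_1}},\dots,x^{D_{j_d}})$ with Floer cylinders/continuation maps decorated by marked points at $D$-intersections; count the fibre of the forgetful map to cancel the $1/d!$; and appeal to the compactness Lemmas \ref{lemma: no D-orbits}, \ref{lemma: no sphere bubbles} and Remark \ref{remark: compactness for $0$-dimensional moduli spaces} to make the counts well-defined. Your explicit multinomial bookkeeping $\frac{1}{d!}\binom{d}{n_1,\dots,n_N}\,n_1!\cdots n_N! = 1$ is a useful elaboration: the paper only works out the case $N=1$ (where the fibre has $d!$ elements outright) and asserts that the general case is identical, whereas you spell out why the sum over labellings $\mathbf{j}$ still produces the factor $d!$ after incorporating the constraint that $\mathbf{j}$ must match the intersection pattern $(u\cdot D_1,\dots,u\cdot D_N)$.
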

\begin{proof}
    By definition, both chain complexes are built from $1$-periodic orbits in $X_{\sigma}$, so $\Phi$ is a vector space isomorphism. It remains to check that for any $1$-periodic orbit $\gamma$ in $X_{\sigma}$, we have
    \begin{equation}
        \Phi(\ell^1_{\alpha_0}(\gamma_1)) = d^{QC}(\Phi(\gamma_1)).
    \end{equation}
    We explain this in the simpler case when $N=1$; the general case is identical.

    Let $u_1$ be a cap for $\gamma_1$, and set $x_1 = (\gamma_1,u_1)$. Recall that
    \begin{equation}
        d^{QC}(x_1) = \sum_{x_0\neq x_1} \# \mapsms(x_0,x_1) x_0.
    \end{equation}
    On the other hand, 
    \begin{equation}
        \ell^1_{\alpha_0}(\gamma_1) = \sum_{\gamma_0}\sum_{d\geq 0} \frac{q^d}{d!}\langle\ell^{1+d}(\gamma_1,x^D,\dots,x^D) ,\gamma_0\rangle.
    \end{equation}
    To relate the two differentials, notice that by our \textbf{(Forgetfulness)} assumption on the choice of perturbation data,  
    for any cap $u_1$ for $\gamma_1$ we have a tautological map
    \begin{equation}\label{eq: map of moduli spaces}
        \mapsms^{\al}_{1+d,\bp,\bn}(\gamma_0,\gamma_1,x^D,\dots,x^D) \rightarrow \mapsms([\gamma_0,u_0],[\gamma_1,u_1]),
    \end{equation}
    where $\bp$ is empty, $\bn=(n_0,n_1,0,\dots,0)$, and $u_0$ is the unique cap for $\gamma_0$ such that $(u_1 - u_0) \cdot D = d$. 
    Our regularity assumptions on the perturbation data (Lemma \ref{lemma: transversality}) ensure that over the $0$-dimensional components of the moduli spaces, the map \eqref{eq: map of moduli spaces} is $d!$-to-1, with the fibre over $(r,u)$ in bijection with the set of labellings of the $d$ intersection points of the curve $u$ with $D$ by the labels $\{2,\ldots,d+1\}$. It follows that
    \begin{equation}
        \frac{1}{d!}\langle\ell^{1+d}(\gamma_1,x^D,\dots,x^D),\gamma_0\rangle = \langle d^{QC}x_1,x_0\rangle, 
    \end{equation}
    where $x_i = [\gamma_i,u_i]$. 
    Hence
    \begin{align*}
        \langle d^{QC}(x_1\otimes q^{u_1\cdot D}),x_0 \rangle 
        &= \frac{q^d}{d!}\langle\ell^{1+d}(\gamma_1,x_D,\dots,x_D),\gamma_0\rangle \otimes q^{-d+u_1\cdot D}\\
        &= \frac{q^d}{d!}\langle\ell^{1+d}(\gamma_1,x_D,\dots,x_D),\gamma_0\rangle \otimes q^{u_0\cdot D}\\
        &= \langle\Phi(\ell^1_{\alpha_0}(\gamma_1)),x_0\rangle
    \end{align*}
    as required.
\end{proof}

By combining the isomorphisms in \eqref{eq: alpha_0 to alpha_1 over R}, \eqref{eq: from F^X to SC}, together with the isomorphisms in Lemma \ref{lemma: dQC from bad J} and Lemma \ref{lemma: alpha_zero gives dQC}, we obtain our main result:
\begin{theorem}
    We have an isomorphism of $R$-modules
    \begin{equation}
        H^*(SC^*(X_{\sigma})\otimes R,\ell^1_{\alpha_1}) \cong QH^*(M,R).
    \end{equation}
\end{theorem}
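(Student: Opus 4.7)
The plan is to establish the stated isomorphism in each fixed cohomological degree $j \in \mathbb{Q}$ and then observe that the construction is independent of auxiliary choices. Fix $j$, and pick a filtration level $p \in \mathbb{R}$ with $p > j + 2$; this will let us apply Lemma \ref{lemma: Iso with SC} and Lemma \ref{lemma: dQC from bad J} simultaneously. The heart of the argument is to chain together, in this degree, the six isomorphisms that have been prepared:
\begin{align*}
QH^j(M;R) &\;\cong\; H^j\!\left(QC^*(M;R),\tilde d^{QC}\right) && \text{(telescope + PSS)}\\
&\;\cong\; H^j\!\left(\sigma_{<p-1}\mathcal{F}^M_p,\tilde d^{QC}\right) && \text{(action truncation)}\\
&\;\cong\; H^j\!\left(\sigma_{<p-1}\mathcal{F}^M_p,d^{QC}\right) && \text{(Lemma \ref{lemma: dQC from bad J})}\\
&\;\cong\; H^j\!\left(\sigma_{<p-1}\mathcal{F}^X_{\geq p}\otimes R,\ell^1_{\alpha_0}\right) && \text{(Lemma \ref{lemma: alpha_zero gives dQC})}\\
&\;\cong\; H^j\!\left(\sigma_{<p-1}\mathcal{F}^X_{\geq p}\otimes R,\ell^1_{\alpha_1}\right) && \text{(gauge equivalence, \eqref{eq: alpha_0 to alpha_1 over R})}\\
&\;\cong\; H^j\!\left(SC^*(X_{\sigma})\otimes R,\ell^1_{\alpha_1}\right) && \text{(Lemma \ref{lemma: Iso with SC} over $R$, i.e.\ \eqref{eq: from F^X to SC})}.
\end{align*}

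First I would justify the two ``free'' steps at the top of the chain. For the identification $QH^j(M;R) \cong H^j(QC^*(M;R),\tilde d^{QC})$ I would invoke the classical PSS isomorphism between each $HF^*(M,H_n;R)$ and $QH^*(M;R)$ together with the fact that the continuation maps in the telescope are isomorphisms on cohomology; this is standard once transversality for $(\tilde J_n)_n$ is in place, which is exactly the reason we introduced the auxiliary family $(\tilde J_n)$ alongside the divisor-preserving family $(J_n)$. The truncation step $H^j(QC^*(M;R),\tilde d^{QC}) \cong H^j(\sigma_{<p-1}\mathcal{F}^M_p,\tilde d^{QC})$ for $j < p-2$ uses that the inclusion $\mathcal{F}^M_{\geq p} \hookrightarrow QC^*(M;R)$ is a filtered quasi-isomorphism by the telescope argument of \cite[Proposition 1.14]{BSV} applied on $M$, combined with the degree-truncation identity \eqref{eq: cohomology isom}.

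The remaining four isomorphisms are simply the lemmas already proved in this section, applied to the same $p$ and $j$. The main care needed is to verify that the resulting isomorphism $QH^j(M;R) \cong H^j(SC^*(X_\sigma)\otimes R,\ell^1_{\alpha_1})$ is independent of the auxiliary choice of $p$, so that the isomorphisms in each degree assemble into an isomorphism of graded $R$-modules; I would argue this by comparing the chain-level constructions for two choices $p < p'$: the inclusion $\sigma_{<p-1}\mathcal{F}^X_{\geq p'} \hookrightarrow \sigma_{<p'-1}\mathcal{F}^X_{\geq p}$ (and its analogue on the $M$-side) intertwines both the $\alpha_0$- and $\tilde d^{QC}$-side of every step above, because $\Phi$, the gauge-equivalence construction of Lemma \ref{lem: qis}, and the homotopy from Lemma \ref{lemma: dQC from bad J} are all compatible with the action filtration.

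The main (small) obstacle is bookkeeping rather than new geometry: verifying this compatibility with respect to $p$, and confirming that we may indeed drop the completion $\wotimes$ in favor of the plain tensor $\otimes$ on the $SC$-side. The latter is where Hypothesis \ref{hyp} is used crucially, via \cite[Proposition 1.15]{BSV}, which forces $\mathfrak{g}_{>0}$ to be concentrated in non-negative degrees so that $SC^*(X_\sigma)\wotimes R = SC^*(X_\sigma)\otimes R$; without this, the chain $\ell^1_{\alpha_1}$ involves infinite sums that need not terminate degreewise, and the isomorphism of the theorem must be phrased with $\wotimes$ (as in Theorem \ref{thm}). Assuming this bookkeeping goes through, composing the six isomorphisms yields the desired isomorphism of $R$-modules in every degree, completing the proof.
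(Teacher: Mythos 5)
Your proof is correct and follows the paper's own argument: chaining together the gauge-equivalence isomorphism \eqref{eq: alpha_0 to alpha_1 over R}, the truncation isomorphism \eqref{eq: from F^X to SC}, Lemma~\ref{lemma: dQC from bad J}, and Lemma~\ref{lemma: alpha_zero gives dQC}. You additionally make explicit the PSS/telescope identification $QH^*(M;R)\cong H^*(QC^*(M;R),\tilde d^{QC})$ and the $M$-side action truncation (both stated only implicitly in the paper or deferred to the outline), as well as the independence-of-$p$ bookkeeping and the role of Hypothesis~\ref{hyp} in removing the completion, all of which is exactly what the paper intends.
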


\bibliographystyle{alpha}
\bibliography{Bibliography.bib}

\newcommand{\etalchar}[1]{$^{#1}$}
\begin{thebibliography}{GHH{\etalchar{+}}24}

\bibitem[AGV24]{Abouzaid-Groman-Varolgunes}
Mohammed Abouzaid, Yoel Groman, and Umut Varolgunes.
\newblock Framed {$E_2$} structures in {F}loer theory.
\newblock {\em Adv. Math.}, 450:Paper No. 109755, 100, 2024.

\bibitem[Aur07]{Auroux-Tdual}
Denis Auroux.
\newblock Mirror symmetry and {$T$}-duality in the complement of an
  anticanonical divisor.
\newblock {\em J. G\"okova Geom. Topol. GGT}, 1:51--91, 2007.

\bibitem[BEAS24]{ES1}
Matthew~Strom Borman, Mohamed El~Alami, and Nick Sheridan.
\newblock An {$L_\infty$} structure on symplectic cohomology.
\newblock Preprint, arXiv:arXiv:2408.09163, 2024.

\bibitem[BK98]{Barannikov-Kontsevich}
Sergey Barannikov and Maxim Kontsevich.
\newblock Frobenius manifolds and formality of {L}ie algebras of polyvector
  fields.
\newblock {\em Internat. Math. Res. Notices}, (4):201--215, 1998.

\bibitem[BSV22]{BSV}
Matthew~Strom Borman, Nick Sheridan, and Umut Varolgunes.
\newblock Quantum cohomology as a deformation of symplectic cohomology.
\newblock {\em J. Fixed Point Theory Appl.}, 24(2):Paper No. 48, 77, 2022.

\bibitem[BvdK24]{Bocklandt_vdK}
Raf Bocklandt and Jasper van~de Kreeke.
\newblock Deformations of gentle {$A_\infty$}-algebras.
\newblock In {\em Recent advances in noncommutative algebra and geometry},
  volume 801 of {\em Contemp. Math.}, pages 17--50. Amer. Math. Soc.,
  [Providence], RI, [2024] \copyright 2024.

\bibitem[CM07]{Cieliebak-Mohnke}
Kai Cieliebak and Klaus Mohnke.
\newblock Symplectic hypersurfaces and transversality in {G}romov-{W}itten
  theory.
\newblock {\em J. Symplectic Geom.}, 5(3):281--356, 2007.

\bibitem[CO06]{Cho-Oh}
Cheol-Hyun Cho and Yong-Geun Oh.
\newblock Floer cohomology and disc instantons of {L}agrangian torus fibers in
  {F}ano toric manifolds.
\newblock {\em Asian J. Math.}, 10(4):773--814, 2006.

\bibitem[EGH00]{sft}
Y.~Eliashberg, A.~Givental, and H.~Hofer.
\newblock Introduction to symplectic field theory.
\newblock {\em Geom. Funct. Anal., Special Volume (2000)}, pages 560--673,
  2000.

\bibitem[EL19]{Evans-Lekili}
Jonathan~David Evans and Yank\i Lekili.
\newblock Generating the {F}ukaya categories of {H}amiltonian {$G$}-manifolds.
\newblock {\em J. Amer. Math. Soc.}, 32(1):119--162, 2019.

\bibitem[EP10]{EP}
Yakov Eliashberg and Leonid Polterovich.
\newblock Symplectic quasi-states on the quadric surface and lagrangian
  submanifolds.
\newblock Preprint, arXiv:1006.2501, 2010.

\bibitem[Fab20]{Fabert}
Oliver Fabert.
\newblock Higher algebraic structures in {H}amiltonian {F}loer theory.
\newblock {\em Adv. Geom.}, 20(2):179--215, 2020.

\bibitem[FOOO09]{FOOO}
Kenji Fukaya, Yong-Geun Oh, Hiroshi Ohta, and Kaoru Ono.
\newblock {\em Lagrangian intersection {F}loer theory: anomaly and
  obstruction}, volume~46 of {\em AMS/IP Studies in Advanced Mathematics}.
\newblock American Mathematical Society, Providence, RI; International Press,
  Somerville, MA, 2009.

\bibitem[Get09]{Getzler-Lie-theory}
Ezra Getzler.
\newblock Lie theory for nilpotent {$L_\infty$}-algebras.
\newblock {\em Ann. of Math. (2)}, 170(1):271--301, 2009.

\bibitem[GHH{\etalchar{+}}24]{GHHPS}
Sheel Ganatra, Andrew Hanlon, Jeff Hicks, Daniel Pomerleano, and Nick Sheridan.
\newblock Homological mirror symmetry for {B}atyrev mirror pairs.
\newblock Preprint, arXiv:2406.05272, 2024.

\bibitem[GP20]{Ganatra-Pomerleano-top-limit}
Sheel Ganatra and Daniel Pomerleano.
\newblock Symplectic cohomology rings of affine varieties in the topological
  limit.
\newblock {\em Geom. Funct. Anal.}, 30(2):334--456, 2020.

\bibitem[GP21]{Ganatra-Pomerleano-log-PSS}
Sheel Ganatra and Daniel Pomerleano.
\newblock A log {PSS} morphism with applications to {L}agrangian embeddings.
\newblock {\em J. Topol.}, 14(1):291--368, 2021.

\bibitem[GS18]{GS-intrinsic}
Mark Gross and Bernd Siebert.
\newblock Intrinsic mirror symmetry and punctured {G}romov-{W}itten invariants.
\newblock In {\em Algebraic geometry: {S}alt {L}ake {C}ity 2015}, volume 97.2
  of {\em Proc. Sympos. Pure Math.}, pages 199--230. Amer. Math. Soc.,
  Providence, RI, 2018.

\bibitem[HK22]{HackingKeating}
Paul Hacking and Ailsa Keating.
\newblock Homological mirror symmetry for log {C}alabi-{Y}au surfaces.
\newblock {\em Geom. Topol.}, 26(8):3747--3833, 2022.
\newblock With an appendix by Wendelin Lutz.

\bibitem[HS95]{HoferSalamon}
H.~Hofer and D.~A. Salamon.
\newblock Floer homology and {N}ovikov rings.
\newblock In {\em The {F}loer memorial volume}, volume 133 of {\em Progr.
  Math.}, pages 483--524. Birkh\"auser, Basel, 1995.

\bibitem[Kon01]{Kontsevich-DQ-alg}
Maxim Kontsevich.
\newblock Deformation quantization of algebraic varieties.
\newblock volume~56, pages 271--294. 2001.
\newblock EuroConf\'erence Mosh\'e{} Flato 2000, Part III (Dijon).

\bibitem[Kon03]{Kontsevich-DQ}
Maxim Kontsevich.
\newblock Deformation quantization of {P}oisson manifolds.
\newblock {\em Lett. Math. Phys.}, 66(3):157--216, 2003.

\bibitem[Lee24]{Lee-multip}
Sukjoo Lee.
\newblock Mirror {${\rm P}={\rm W}$} conjecture and extended
  {F}ano/{L}andau-{G}inzburg correspondence.
\newblock {\em Adv. Math.}, 444:Paper No. 109617, 65, 2024.

\bibitem[LOT25]{LOT}
Robert Lipshitz, Peter Ozsv\'ath, and Dylan~P. Thurston.
\newblock A bordered {$HF^-$} algebra for the torus.
\newblock {\em J. Symplectic Geom.}, 23(1):37--158, 2025.

\bibitem[LP13]{LinPomerleano}
Kevin~H. Lin and Daniel Pomerleano.
\newblock Global matrix factorizations.
\newblock {\em Math. Res. Lett.}, 20(1):91--106, 2013.

\bibitem[McL12]{McLean-growth-rate}
Mark McLean.
\newblock The growth rate of symplectic homology and affine varieties.
\newblock {\em Geom. Funct. Anal.}, 22(2):369--442, 2012.

\bibitem[McL20]{McLean-birational-CY}
Mark McLean.
\newblock Birational {C}alabi-{Y}au manifolds have the same small quantum
  products.
\newblock {\em Ann. of Math. (2)}, 191(2):439--579, 2020.

\bibitem[MS04]{mcduffsalamon}
Dusa McDuff and Dietmar Salamon.
\newblock {\em {$J$}-holomorphic curves and symplectic topology}, volume~52 of
  {\em American Mathematical Society Colloquium Publications}.
\newblock American Mathematical Society, Providence, RI, 2004.

\bibitem[Oh02]{Oh-measurezero}
Yong-Geun Oh.
\newblock Chain level {F}loer theory and {H}ofer's geometry of the
  {H}amiltonian diffeomorphism group.
\newblock {\em Asian J. Math.}, 6(4):579--624, 2002.

\bibitem[Orl04]{Orlov-Dsing}
D.~O. Orlov.
\newblock Triangulated categories of singularities and {D}-branes in
  {L}andau-{G}inzburg models.
\newblock {\em Tr. Mat. Inst. Steklova}, 246:240--262, 2004.

\bibitem[Orl12]{Orlov-nonaffine}
Dmitri Orlov.
\newblock Matrix factorizations for nonaffine {LG}-models.
\newblock {\em Math. Ann.}, 353(1):95--108, 2012.

\bibitem[Pom21]{Pomerleano-intrinsic}
Daniel Pomerleano.
\newblock Intrinsic mirror symmetry and categorical crepant resolutions.
\newblock Preprint, arXiv:2103.01200, 2021.

\bibitem[Pre12]{Preygelthesis}
Anatoly Preygel.
\newblock {\em Thom-{S}ebastiani and {D}uality for {M}atrix {F}actorizations,
  and {R}esults on the {H}igher {S}tructures of the {H}ochschild {I}nvariants}.
\newblock ProQuest LLC, Ann Arbor, MI, 2012.
\newblock Thesis (Ph.D.)--Massachusetts Institute of Technology.

\bibitem[PS23a]{perutz2022constructing}
Timothy Perutz and Nick Sheridan.
\newblock Constructing the relative {F}ukaya category.
\newblock {\em J. Symplectic Geom.}, 21(5):997--1076, 2023.

\bibitem[PS23b]{Pomerleano-Seidel-1}
Daniel Pomerleano and Paul Seidel.
\newblock The quantum connection, {Fourier--Laplace transform}, and families of
  {$A_\infty$} categories.
\newblock Preprint, arXiv:2308.13567, 2023.

\bibitem[PS24]{Pomerleano-Seidel}
Daniel Pomerleano and Paul Seidel.
\newblock Symplectic cohomology relative to a smooth anticanonical divisor.
\newblock Preprint, arXiv:arXiv:2408.09039, 2024.

\bibitem[PSS96]{PSS}
S.~Piunikhin, D.~Salamon, and M.~Schwarz.
\newblock Symplectic {F}loer-{D}onaldson theory and quantum cohomology.
\newblock In {\em Contact and symplectic geometry ({C}ambridge, 1994)},
  volume~8 of {\em Publ. Newton Inst.}, pages 171--200. Cambridge Univ. Press,
  Cambridge, 1996.

\bibitem[Sal99]{salamon}
Dietmar Salamon.
\newblock Lectures on {F}loer homology.
\newblock In {\em Symplectic geometry and topology ({P}ark {C}ity, {UT},
  1997)}, volume~7 of {\em IAS/Park City Math. Ser.}, pages 143--229. Amer.
  Math. Soc., Providence, RI, 1999.

\bibitem[Sei02]{Seidel-rel-fuk}
Paul Seidel.
\newblock Fukaya categories and deformations.
\newblock In {\em Proceedings of the {I}nternational {C}ongress of
  {M}athematicians, {V}ol. {II} ({B}eijing, 2002)}, pages 351--360. Higher Ed.
  Press, Beijing, 2002.

\bibitem[Sei08]{PL-theory}
Paul Seidel.
\newblock {\em Fukaya categories and {P}icard-{L}efschetz theory}.
\newblock Zurich Lectures in Advanced Mathematics. European Mathematical
  Society (EMS), Z\"urich, 2008.

\bibitem[She16]{SheridanFano}
Nick Sheridan.
\newblock On the {F}ukaya category of a {F}ano hypersurface in projective
  space.
\newblock {\em Publ. Math. Inst. Hautes \'Etudes Sci.}, 124:165--317, 2016.

\bibitem[Sun24]{Sun-SH}
Yuhan Sun.
\newblock Index-bounded relative symplectic cohomology.
\newblock {\em Algebr. Geom. Topol.}, 24(9):4799--4836, 2024.

\bibitem[Tam98]{Tamarkin}
Dmitry Tamarkin.
\newblock Another proof of {M.} {K}ontsevich formality theorem for
  {{$\mathbb{R}^n$}}.
\newblock Preprint, arXiv:9803025, 1998.

\bibitem[Tia95]{Tian95}
Gang Tian.
\newblock Quantum cohomology and its associativity.
\newblock {\em Current developments in mathematics}, 1995(1):361--407, 1995.

\bibitem[TMZ18]{TMZ}
Mohammad~F. Tehrani, Mark McLean, and Aleksey Zinger.
\newblock Normal crossings singularities for symplectic topology.
\newblock {\em Adv. Math.}, 339:672--748, 2018.

\bibitem[Ton19]{Tonkonog}
Dmitry Tonkonog.
\newblock From symplectic cohomology to {L}agrangian enumerative geometry.
\newblock {\em Adv. Math.}, 352:717--776, 2019.

\bibitem[VdB07]{VdB}
Michel Van~den Bergh.
\newblock On global deformation quantization in the algebraic case.
\newblock {\em J. Algebra}, 315(1):326--395, 2007.

\bibitem[Wei94]{Weibel}
Charles~A. Weibel.
\newblock {\em An introduction to homological algebra}, volume~38 of {\em
  Cambridge Studies in Advanced Mathematics}.
\newblock Cambridge University Press, Cambridge, 1994.

\end{thebibliography}
\end{document}